\documentclass[10pt]{article}
\usepackage{amssymb}
\usepackage{amsfonts}
\usepackage{graphicx}
\usepackage{amsmath}
\usepackage{tikz}
\usepackage{enumerate}
\usepackage{hyperref}
\newtheorem{theorem}{Theorem}[section]

\newtheorem{assumption}[theorem]{Assumption}

\newtheorem{lemma}[theorem]{Lemma}

\newtheorem{proposition}[theorem]{Proposition}
\newtheorem{remark}[theorem]{Remark}

\newenvironment{proof1}[1][\bf Proof of the Theorem \ref{mains1}]{\noindent\textit{#1.} }{\hfill \rule{0.5em}{0.5em}}
\newenvironment{proof2}[1][\bf Proof of the Theorem \ref{mains2}]{\noindent\textit{#1.} }{\hfill \rule{0.5em}{0.5em}}

\date{}

\usepackage{vmargin}            
\setmarginsrb{2cm}{1cm}{2cm}{1cm}{0cm}{0cm}{2cm}{2cm}
\renewcommand{\eqref}[1]{(\ref{#1})}
\newcommand{\eps}{\varepsilon}

\newcommand{\C}{\mathbb{C}}

\newcommand{\bbw}{\pmb{w}}

\newcommand{\bU}{\pmb{U}}

\newcommand{\bW}{\pmb{W}}


\newcommand{\R}{\mathbb{R}}

\newcommand{\dt}{\frac{d}{dt}}
\newenvironment{proof}[1][Proof]{\noindent\textit{#1.} }{\hfill \rule{0.5em}{0.5em}}

\begin{document}

\title{ Global behavior of N competing species with strong diffusion: diffusion leads to exclusion}

\author{ \textsc{Fran\c cois Castella$^{a}$, \textsc{Sten Madec$^{b}$}} and \textsc{Yvan Lagadeuc$^{c}$} \\
[1mm] $^{a}${\small  \textit{Univ. Rennes 1, UMR CNRS 6625 Irmar, Campus de Beaulieu, 35042 Rennes cedex, France   }   }\\
$^{b}${\small \textit{Univ. Tours,  UMR 7350 LMPT, F-37200 Tours - France}}\\
$^{c}${\small \textit{Univ. Rennes 1, UMR CNRS 6553 Ecobio et IFR/FR Caren, Campus de Beaulieu, 35042 Rennes Cedex, France} }
}
\maketitle

\begin{abstract}
It is known that the competitive exclusion principle holds for a large kind of models involving several species competing for 
a single resource in an homogeneous environment. Various  works indicate that the coexistence is possible in an heterogeneous environment.
We propose a spatially heterogeneous system modeling the competition of several species for a single resource. If  spatial movements are fast enough, 
we show that our system can be well approximated by a spatially homogeneous  system, called aggregated model, which can be explicitly computed. 
Moreover, we show that if the competitive exclusion principle holds for the aggregated model, it holds  for the spatially heterogeneous model too.

\vspace{0.2in}\noindent \textbf{Key words}  Reaction-diffusion systems, Heterogeneous environment, Global behavior, Chemostat

\vspace{0.1in}\noindent \textbf{2010 Mathematics Subject Classification} 35J55, 35J61, 92D40. 

\end{abstract}



\section{Introduction}

In this paper, we are interested in a reaction-diffusion system in a smooth domain  $\Omega\subset \R^p$ modeling the interaction of $N$ species competing for a single resource in a heterogeneous environment
\begin{equation}\label{introeq0}
\left\{ \begin{array}{lll}
\frac{\partial}{\partial t} R^\eps=I-\sum_{i=1}^N \frac{1}{\lambda_i}f_i(R^\eps)V_i^\eps-m_0 R^\eps+\frac{1}{\eps} A_0R^\eps&& \text{ on $\Omega$}\\
\frac{\partial}{\partial t} V_i^\eps=(f_i(x,R^\eps)-m_i(x))V_i^\eps+\frac{1}{\eps} A_i V_i^\eps&i=1..N&\text{ on $\Omega$}\\
\partial_n R^\eps=0&&\text{ on $\partial \Omega$}\\
\partial_n V_i^\eps=0&i=1,..,N& \text{ on $\partial \Omega$}\\
R^\eps(t=0)=R^0\geq 0&&\\
V_i^\eps(t=0)=V_i^0\geq0 &i=1,\cdots,N&\\
\end{array}\right.\end{equation}
where, for $i=0,\cdots,N$,  $A_i=div(a_i(x) \nabla \cdot)$, with $a_i\in C^1(\overline{\Omega})$ is positive, 
and  $\partial_n=\nabla\cdot \vec{n}$ denotes the normal derivative on $\partial\Omega$, and, at any position $x\in\Omega$ and instant $t\geq 0$,\\
\begin{itemize}
\item $R^\eps(x,t)$ is the concentration of resource,
\item $I(x)\geq 0$ is the input of substrate,
\item $m_0(x)>0$ is a natural decreasing factor modeling phenomena as sedimentation and dilution,
\item $V_i^\eps(x,t)$ is the concentration of the species $i$,
\item $f_i(R)(x,t)=f_i(x,R(x,t))$ is the consumption rates of the species $i$ on the resource $R$,
\item $\lambda_i\in (0,+\infty)$ is the growth yield of the species $i$,
 \item $m_i(x)>0$ is the mortality rates of the species  $i$,
 \item $\frac{1}{\eps}\in(0,+\infty)$ is the common diffusion rate.
 \end{itemize}
 \paragraph{}
\noindent 
The resource is the only limiting factor in this model and species interact indirectly  through their respective  consumption of the resource. 
Without spatial structure, this model is known as the well stirred  chemostat which has received considerable attention
\cite{LI, SaMa,waltbook, WL, WX}. In the well stirred chemostat it is known that generically, 
all (nonnegative) steady states are of the form $(r,u_1,\cdots,u_n)$ where at most one $u_i$ is positive and exactly one of these steady states is stable. 
Under some additional assumptions on the parameters  this only stable steady state is a global attractor. 
In other words, the  competitive exclusion principle (CEP) holds: at most one species survives as $t\to+\infty$. 
In this perspective, our  model  is motivated by the following question. {\it Can the spatial heterogeneity permits the long term coexistence of many species.}\\
The influence of spatial heterogeneity in population dynamics has received considerable attention. 
We refer to the review of Lou \cite{Loureview} and references therein. 
Most of the time,   spatial heterogeneity is considered in prey-predator system or Lotka-Volterra competing system. 
There is very few consideration of spatial heterogeneity in systems of species competing for a single resource.\\ 

Waltman {\it et al.} \cite{walt1,SoW} studied this kind of system for two species in one spatial dimension with $A_i=\partial_{xx}$ 
for  $i=0,1,2$ and $m_i\equiv 0$, $I\equiv0$ with Michaelis-Menten consumption rates {\it independent on $x$} and Robin boundary conditions. 
Wu \cite{Wu1} generalized this system in any spatial dimensions and showed the existence of positive stationary solution for two species. 
Recently  Nie and Wu \cite{NieWu} show  uniqueness and global stability properties for this stationary solution under some technical assumptions. \\ 
The above mentioned works use strongly a monotone method  which holds only for {\it two species} and under the additional condition that both the diffusion rates $a_i$ {\it do not depend on $i$}.
The other cases has been very little studied. Waltman {\it et al.} \cite{walt4} treat the case of two species  and different but close enough diffusion rates, 
by using a perturbation method.  For  more than two species,
Baxley and Robinson \cite{BaxRob} show the existence of a stationary solution {\it near a 
bifurcation point} for general elliptic operators $A_i-m_i$ and Michaelis-Menten type consumption functions.\\

Our system is slightly different from the above cited works  since here, the spatial heterogeneity takes place directly 
on the reaction terms rather than on the boundary conditions. If a similar analysis can be done for two species 
in the case of operators $A_i-m_i$ which do not depend on $i$, this different formulation allows us to take  Neumann boundary conditions. 
This make possible to investigate phenomena occurring when the diffusion rates $\frac{1}{\eps}$ varies, in a situation wherethe operator
$A_i-m_i$ are  {\it species dependent}.
Stationary solution of this  system
for two species and any diffusion rates has been investigated by Castella and Madec in \cite{CasMad} using global bifurcation methods.
For any number of species,   the stationary solutions has been studied by Ducrot and Madec in \cite{DUMA} when the diffusion rates $\frac{1}{\eps}$ tends to $0$.
The present paper focuses on the opposite case $\frac{1}{\eps}\to +\infty$ and investigates both the stationary solutions and the global dynamic.

\paragraph{}
The purpose of this article is to show that the dynamics of the system  is well described by the dynamics of an associated averaged system, 
called aggregated system, if the diffusion rate is large enough. 
In particular, we show that if the  CEP holds for the aggregated problem, then the CEP holds for  the original problem  for small enough $\eps$. 
Note that the model of homogeneous chemostat is based on the assumption  that the chemostat is well mixed. 
This  study makes the validity of this assumption more precise and clarifies the  parameters of the associated homogeneous problem.\\


\paragraph{}
\noindent Here, we  investigate a fast migration problem:
\begin{equation}\label{introeq1}
\frac{d}{dt}\bW^\eps(x,t)=\mathcal{F}(x,\bW^\eps(x,t))+\frac{1}{\eps} K \bW^\eps(x,t)
\end{equation}
 where $\bW^\eps(t):=\bW^\eps(\cdot,t)$ is a vector with $N+1$ components both belonging  to a well chosen Banach space. 
 The demography is described by the reaction terms $\mathcal{F}(\bW^\eps)$  and the operator   $K$ models the spatial movements. 
Such a complex system, involving  $N+1$ partial derivatives equations, appears naturally when one considers phenomena acting on different time scales.
It is well known (see for instance, Conway, Hoff and Smoller \cite{CHS}, Hale and Carvalho \cite{Hale} and references therein) 
that  systems like \eqref{introeq1} are well 
described, with an $O(\eps)$ error term, by the averaged system

\begin{equation}\label{introeq2}
\frac{d}{dt}\bbw^\eps(t)=
\frac{1}{|\Omega|}\int_\Omega\mathcal{F}\left(x,\bbw^\eps(t)\right)dx
\text{ where } \bbw^\eps(t)=\frac{d}{dt}\frac{1}{|\Omega|}\int_\Omega \bW^\eps(x,t)dx
\end{equation}
as soon $\eps$ is small enough. In fact, in the case of {\it homogeneous} reaction-terms, the asymptotic profiles are given {\it exactly} by the system
\eqref{introeq2}, while for {\it spatially dependent} reaction-terms, the $O(\eps)$ error term remains.\\
Hence, we use here an alternative approach using the invariant manifold theory (see \cite{Ca}) which provides precise estimates 
on the error between \eqref{introeq2}
and \eqref{introeq1}. These estimates are useful to describe {\it exactly } the long time dynamic of \eqref{introeq1} for small enough $\eps$.

Basically,  the central manifold theorem allows to reduce the study of \eqref{introeq1} to this of the aggregated system  \eqref{introeq2}
involving only $N+1$ differential equations. Many authors use this approach in populations dynamics. We refer to  Poggiale, 
Auger and  Sanchez  \cite{AP, po3, po1} for results on this subject in differential systems, Arino {\it et al} \cite{ASBA} for age-structured model  
and most recently,  Castella \textit{et al.} \cite{Castella2009} and Sanchez {\it et al.} \cite{po5} in  problems involving functional space.      

\paragraph{}
The essential features for this approach to be valid is that the solution space $\mathcal{H}$ 
admits a decomposition on the form $\mathcal{H}=E\oplus F$ where $E=ker(K)$ and $F$ is invariant under $K$ 
while the real part of the spectrum of $K_{|F}$   belongs to $(-\infty,-\alpha)$ for some $\alpha>0$. Note that such is  the case for $\Delta$ 
with zero flux boundary conditions.
\noindent Under this conditions, projecting the system $S_\eps$ on $E$ and $F$ and denoting $X^\eps$ and $Y^\eps$ the projections of $\bW^\eps$ on $E$ and $F$ 
respectivly, leads to  the following ``slow-fast'' system
 \begin{equation}\label{pbgallentrap}
 \left\{\begin{array}{l}
 \partial_t X^\eps(t)=\mathcal{F}_0(X^\eps(t),Y^\eps(t))\\
 \partial_t Y^\eps(t)=\mathcal{G}_1(X^\eps(t),Y^\eps(t))+\frac{1}{\eps} K Y^\eps(t).\\
 \end{array}\right..
 \end{equation}
Here, $X^\eps\in E$ is the slow variable  and $Y^\eps \in F$ is the fast variable.\\

In essence, the central manifold theorem  asserts the existence of an invariant manifold 
$\mathcal{M}^\eps=(X^\eps,h(X^\eps,\eps))\in E\times F$ verifying $h(X^\eps,\eps)=O(\eps)$ as $\eps\to 0$ and attracting exponentially fast any trajectories.
Thus, the complex dynamics of $S_\eps$  may be approach,  up to exponentially small error term, by the dynamics reduced to $\mathcal{M}^\eps$, 
which is described by only $N+1$ differential equations rather than $N+1$ partial differential equations. This reduced system reads shortly
\begin{equation}\label{reduced1intro}\left\{\begin{array}{l}
\frac{d}{dt} X^\eps(t)=\mathcal{F}_0(X^\eps,h(X^\eps,\eps))\\
Y^\eps(t)=h(X^\eps,\eps),\\
\end{array}.\right.
\end{equation}

\noindent Generaly, the central manifold $\mathcal{M}^\eps$ can not be explicitly computed. 
Explicit approximations of  $h(x,\eps)$ can though be computed at any order $\eps^l$.
This allows to describe the dynamic of the reduced system up to an additional polynomial small error term of order $\eps^{l+1}$.
In this works, we concentrate our study on the order $0$ reduced system,  called the aggregated system, which reads\\
\begin{equation}\label{reduced3intro}
\frac{d}{dt} X^{\eps,[0]}(t)=\mathcal{F}_0(X^{\eps,[0]},0),\; Y^{\eps,[0]}(t)=h(X^{\eps,[0]}(t),\eps).
\end{equation}

Explicit calculation shows that  the system \eqref{reduced3intro} is a simple homogeneous chemostat system. 
It follows that   long time behavior of its solutions is completely known for a large choice of function $\mathcal{F}_0$. 
The aim of this work is to transfer qualitative properties of \eqref{reduced3intro} to the original system $S_\eps$.

\paragraph{}
This article is organized as follow.
In the second section, we precise the assumptions on the model and we state a theorem assuring the existence and uniqueness of 
 classical solutions which are uniformly bounded independently on $t$ and $\eps$. We then restate the system on a slow-fast 
 form allowing to apply the central manifold theorem.  In the end of the second section, we state our two main results: 
Theorems \ref{mains1} and \ref{mains2}.  In the third section, we beging to state the central manifold Theorem \ref{centralmanifold}
and a Theorem describing the exponential convergence towards the central manifold \ref{errorbound}. 
Next, we use these two Theorems to prove several general results on slow-fast systems.
In the fourth section, we use these general results to prove the Theorems \ref{mains1} and \ref{mains2}.
The main result (Theorem \ref{mains2}) states that, if the CEP holds for the aggregated system, then   it holds for the original system too, for small 
enough $\eps$. Hence, only one species can win the competition, namely the best  competitor in average. 
This best competitor in average  can be explicitly computed. In the fith section, we discuss through some examples three important phenomena
determining which species is the best  competitor in averaged. These phenomena give good informations on how
a heterogeneous environment may promote the coexistence for an intermediate diffusion rate. The sixth section concludes the paper.

\section{Model and main results}

\subsection{The model}
First, by denoting $U_i^\eps(x,t)=\lambda_i^{-1}V_i^{\eps}(x,t)$ we see that $(R^\eps,V_1^\eps,\cdots,V_N^\eps)(x,t)$ 
is a solution of the system \eqref{introeq0} if and only if 
$(R^\eps,U_1^\eps,\cdots,U_N^\eps)(x,t)$ is a solution of
\begin{equation*}
 \begin{array}{lc}
S_\eps:\;\left\{ \begin{array}{lll}
{\displaystyle \dt R^\eps(x,t)=I(x)-\sum_{i=1}^N f_i(x,R^\eps(x,t))U_i^\eps-m_0(x) R^\eps(x,t)+\frac{1}{\eps} A_0R^\eps(x,t)}&& \text{ on $\Omega$}\\
{\displaystyle\dt U_i^\eps(x,t)=(f_i(x,R^\eps(x,t))-m_i(x))U_i^\eps(x,t)+\frac{1}{\eps} A_i U_i^\eps(x,t)}&i=1,\cdots,N&\text{ on $\Omega$}\\
{\displaystyle\partial_n R^\eps(x,t)=0}&&\text{ on $\partial \Omega$}\\
{\displaystyle\partial_n U_i^\eps(x,t)=0}&i=1,\cdots,N& \text{ on $\partial \Omega$}\\
{\displaystyle R^\eps(x,0)\geq 0}&&\\
{\displaystyle U_i^\eps(x,0)\geq0 }&i=1,\cdots,N&.\\
\end{array}\right.\end{array}\end{equation*}
This system  can be shortly written  as
\begin{equation}\label{PBshort}\left\{\begin{array}{lc}
\dt \bW^\eps(x,t)=\mathcal{F}(x,\bW^\eps(x,t))+\frac{1}{\eps}K\bW^\eps(x,t)& \text{ $t>0$ et $x\in \Omega$, }\\
\partial_n(\bW^\eps)(x,t)=0, &\text{ $t>0$ et $x\in\partial\Omega$}\\
\bW^\eps(x,0)=\left(R^0(x),U_1^0(x),\cdots,U_N^0(x)\right),& \text{ $x\in \Omega$}\end{array}\right.
\end{equation}
where
\begin{itemize}
\item $\bW^\eps(x,t)=(R^\eps(x,t),U_1^\eps(x,t),..,U_n^\eps(x,t))^T$, 
\item $\mathcal{F}(x,\bW^\eps(x,t))=\left(\begin{array}{c}
I(x)-m_0(x) R^\eps(x,t)-\displaystyle\sum_{i=1}^N U_i^\eps(x,t) f_i(x,R^\eps(x,t))\\                                                                           
\big(f_1(x,R^\eps(x,t))-m_1(x)\big)U_1^\eps(x,t)\\\vdots\\\big(f_N(x,R^\eps(x,t))-m_N(x)\big)U_N^\eps(x,t)      
\end{array}\right)$, 
\item $K=diag(A_i)$.
\end{itemize} 
In the sequel,  the {\it same} symbol $\mathcal{F}$ is used  to refer to the Nemitski operator $\bW\mapsto \mathcal{F}(\bW)$ where 
$$\mathcal{F}(\bW)(x)=\mathcal{F}(x,\bW(x)).$$
\begin{remark}\label{general}
All the results of this works hold true for any uniform elliptic operators  $A_i$, or integral operators verifying some property (see \cite{Castella2009}). 
One can also investigate gradostat-like models by taking $\Omega=\{1,\cdots,P\}$ and $A_i\in \R^{P\times P}$ an
irreducible matrix with nonnegative off diagonal entries such that the sum of each column is  $0$. The results proved here hold as well in this case.
\end{remark}

\noindent In the sequel, we make the two following assumptions insuring that the system $S_\eps$ admits an 
unique global classical positive solution which is uniformly  bounded in $C^0\left(\overline{\Omega}\right)$.
\begin{assumption}[Assumption on the parameters]\label{chap5hyp0}
$\quad$
\begin{itemize}
\item $I\in C^1(\overline{\Omega},\R^+)$ and  $I\not\equiv 0$.
\item For $i=0,\cdots,N$, $m_i\in C^1(\overline{\Omega})$ and $m_i(x)>0$.
\item For $i=0,\cdots,N$, $a_i\in C^1(\overline{\Omega})$ and for all $x\in \overline{\Omega},$ we have $a_i(x)>0$.
\end{itemize}
\end{assumption}
The assumption
\noindent $I\not\equiv0$ means that there is always an input of resource in the system. 
If $I\equiv 0$, then $(0,\cdots,0)\in \R^{N+1}$ is a global attractor and the problem is trivial.
\begin{assumption}[Assumptions on the consumption functions]\label{chap5hypf}
For each $i=1,\cdots,N$, we assume
\begin{itemize}
\item  $\forall R\in \R^+$, $f_i(\cdot,R):x\mapsto f_i(x,R)$ belongs to $C^1(\overline{\Omega})$ and take values in $\R^+$, 
\item $\forall x\in\Omega$, $f_i(x,\cdot):R\mapsto f_i(x,R)$  belongs to $C^1(\R^+)$ and is increasing. 
Moreover, $R\mapsto D_Rf_i(x,R)$ is locally Lipschitz.
\item  $\forall x\in\Omega$, $f_i(x,0)=0$.  
\end{itemize}
\end{assumption}
\begin{remark} The monotonicity of $R\mapsto f_i(x,R)$ is not fundamental in our analysis. 
Indeed, our results hold true if $\int_\Omega f_i(x,r)dx=\int_\Omega m_i(x)dx$ has at most one solution $r_i^*$ and 
if the conclusions of the proposition \ref{hyp0} are verified. However, in order to avoid technical difficulties,
we restrict ourself to the case of increasing consumption functions.\end{remark}

\noindent It is classical that the system $S_\eps$ conserves the positive quadrant and admits an unique solution for a time $\tau$ small enough. 
Moreover, the maximum principle implies that $R^\eps$ verifies for any $t>0$ the uniform bound $\|R^\eps(\cdot,t)\|\leq M$ for some $M>0$ independent of the time $t$.  
It follows, using  standard results on parabolic systems (see \cite{Henry, Pazy}), that the solution is well defined and classical globally in time. 
Finally, it can be proven by a  $L^p$ estimates method\footnote{The key to apply this method is as follows. 
1. There is a $L^1$ control on the solutions uniformly in time $\|\bW^\eps(\cdot,t)\|_1\leq C$.
2. The system has a particular structure. For our system, the system is triangular since the $U_i^\eps$ are coupled indirectly through $R^\eps$.
3. There is a uniform bound for a (well chosen) component of $\bW^\eps$. Here, $\|R^\eps(\cdot,t)\|\leq M$.
}  
(Hollis {\it et al.} \cite{Pierre}),  
that the system $S_\eps$ admits a unique classical positive solution which is uniformly bounded in time in  $\left(C^{0}(\overline{\Omega})\right)^{N+1}$. 
More precisely, 
the following theorem\footnote{The theorem \ref{chap5thmaj} holds true with an initial condition 
$W^\eps(0)\in (L^\infty({\Omega}))^{N+1}$. However, since $W^\eps(t)$  belongs to $\left(C^0(\overline{\Omega})\right)^{N+1}$ for any $t>0$, one reduce ourself 
to the case of continuous initial data. This will simplify the statement of the main results. Finally, the solution is more regular since 
$\bW^\eps \in C^1((0,+\infty), W^{2,p})$ for any $p>1$.} holds (see \cite{thesis} chapter III for this specific case).
\begin{theorem}\label{chap5thmaj}
Assume that $W^\eps(0)\in (C^{0}(\overline{\Omega}))^{N+1}$ is nonnegative. For each  $\eps>0$,  the system $S_\eps$ admits an unique  solution 
$\bW^\eps=(R^\eps,U_1^\eps,..,U_N^\eps)\in C^1\left(]0,+\infty[;(C^{0}(\overline{\Omega}))^{N+1}\right)$ which is nonnegative.
Moreover,  for each $\eps_0>0$ and $\eps\in(0,\eps_0)$, there exists a constant $M(\eps_0)$ independent on $t$ and  $\eps$ such that
$$\|R^\eps(\cdot,t)\|_\infty+\sum_{i=1}^N\|U^\eps_i(\cdot,t)\|_\infty \leq M(\eps_0).$$
\end{theorem}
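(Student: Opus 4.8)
The plan is to follow the four-step program sketched just before the statement: local well-posedness, invariance of the positive cone, a uniform bound on the resource $R^\eps$, and finally the uniform-in-time control of the consumers $U_i^\eps$ by a mass/$L^p$ argument. For local existence I would note that each operator $\frac1\eps A_i$ with homogeneous Neumann conditions generates an analytic semigroup on $C^0(\overline{\Omega})$ (and on each $L^p$), so $\frac1\eps K$ generates one on $(C^0(\overline{\Omega}))^{N+1}$. Under Assumption \ref{chap5hypf} the Nemitski operator $\mathcal F$ is locally Lipschitz (this uses that $R\mapsto D_Rf_i(x,R)$ is locally Lipschitz), so the usual variation-of-constants fixed-point argument (Henry, Pazy \cite{Henry, Pazy}) yields a unique mild, hence classical, solution on a maximal interval $[0,T_{\max})$. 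Nonnegativity follows from quasi-positivity of $\mathcal F$ on the cone $(\R^+)^{N+1}$: when $R^\eps=0$ the first reaction term is $I(x)\geq0$ because $f_i(x,0)=0$, and when $U_i^\eps=0$ the $i$-th reaction term vanishes; an invariant-region argument for parabolic systems then gives $\bW^\eps(\cdot,t)\geq0$ on $[0,T_{\max})$.

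Next I would bound $R^\eps$. Since $U_i^\eps\geq0$ and $f_i\geq0$, dropping the consumption terms gives $\partial_t R^\eps\leq\|I\|_\infty-(\min_x m_0)R^\eps+\frac1\eps A_0R^\eps$. Comparing with the spatially constant super-solution of the scalar ODE $\dot\phi=\|I\|_\infty-(\min_x m_0)\phi$ (constants lie in $\ker A_0$ thanks to the Neumann conditions, so the diffusion does not affect the comparison) yields $\|R^\eps(\cdot,t)\|_\infty\leq M$, with $M$ depending only on $\|I\|_\infty$, $\min m_0$ and the initial datum, hence independent of $t$ and $\eps$. Once $R^\eps$ is bounded, $f_i(x,R^\eps)\leq f_i(x,M)$ is bounded above, so each $U_i^\eps$ grows at most exponentially in time and cannot blow up in finite time; standard continuation upgrades the solution to a global one, $T_{\max}=+\infty$.

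The delicate step is the uniform-in-time bound on the $U_i^\eps$, where I would exploit the mass structure. Integrating over $\Omega$ and using $\int_\Omega A_i\varphi\,dx=0$ (divergence form plus zero flux), the consumption terms $\int_\Omega f_i(x,R^\eps)U_i^\eps$ cancel between the $R$-equation and the $U_i$-equations, so $\Phi(t):=\int_\Omega\big(R^\eps+\sum_{i}U_i^\eps\big)\,dx$ satisfies $\Phi'(t)=\int_\Omega I-\int_\Omega m_0R^\eps-\sum_i\int_\Omega m_iU_i^\eps\leq\|I\|_{L^1}-\delta\,\Phi(t)$ with $\delta=\min_i\min_x m_i>0$. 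This gives the uniform bound $\|\bW^\eps(\cdot,t)\|_{L^1}\leq C$. Combined with the already established $L^\infty$ bound on the single component $R^\eps$ and with the triangular structure of the system (the $U_i^\eps$ interact only indirectly, through $R^\eps$), the $L^p$-duality and bootstrapping method of Hollis, Martin and Pierre \cite{Pierre} promotes this to a uniform-in-time bound in $(C^0(\overline{\Omega}))^{N+1}$, which is exactly the claimed estimate.

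The main obstacle is this last step. A naive $L^p$ energy estimate on $U_i^\eps$ does not close, because the reaction $f_i(x,R^\eps)U_i^\eps$ is a source of size comparable to $\|U_i^\eps\|_{L^p}^p$ that the dissipation $-m_iU_i^\eps$ alone need not dominate; it is precisely the cancellation in the total-mass equation together with the triangularity that rescues the argument. The remaining care concerns the $\eps$-uniformity: the diffusion smoothing is harmless as $\eps\to0$, but one must track the constants through the duality estimates to obtain a bound $M(\eps_0)$ valid on the whole range $\eps\in(0,\eps_0)$, and this is where the genuine work lies.
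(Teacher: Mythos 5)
Your proposal is correct and follows essentially the same route as the paper: local well-posedness and positivity by standard semigroup/invariant-region arguments, a maximum-principle (comparison) bound on $R^\eps$, and then the $L^p$-duality method of Hollis--Martin--Pierre driven by exactly the three ingredients the paper lists in its footnote (uniform-in-time $L^1$ control from the mass cancellation, the triangular coupling through $R^\eps$, and the $L^\infty$ bound on the distinguished component $R^\eps$). The paper itself only sketches this argument and defers the details to \cite{thesis}, so your write-up is, if anything, more explicit than the original.
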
 
Armed with this Theorem, we are in position to analyze the asymptotic behavior
of the dynamic of $S_\eps$ as $\eps\to 0$.\\

\subsection{Slow Fast Form}\label{sec22}

When seen as an operator on $L^2(\Omega)$, the operator $A_i^2:=div(a_i(x)\nabla\cdot)$ with homogeneous Neumann boundary conditions is defined as 
$$D(A_i^2):=\left\{ U\in H^1(\Omega)\; \exists V\in L^2(\Omega),\;\forall \phi\in H^1(\Omega),\; \int_\Omega a_i(x)\nabla U(x) \nabla \phi(x)dx
=-\int_\Omega V(x) \phi(x)\right\}.$$
$$A_i^2 U:=V,\; \forall U\in D(A_i^2).$$

In order to obtain  uniform estimates, we prefer to focus on the operator $A_i^\infty:=div(a_i(x)\nabla\cdot)$ 
when acting on the set of continuous function $(C^0(\overline{\Omega}),\|\cdot \|_\infty)$ where $\|f\|_\infty=\sup_{x\in \overline{\Omega}} (|f(x)|)$. 
Hence, we define
$$D(A_i^\infty)=\left\{ U\in D(A_i^2)\cap C^0(\overline{\Omega}),\; A_i^2 U\in C^0(\overline{\Omega})\right\},$$
$$A_i^\infty U = A_i^2 U,\;\forall U\in D(A_i^\infty) $$ 

\noindent We have
$$ker(A_i^\infty)=span(1)=\R\text{ and }\widetilde{F}:=Im(A_i^\infty)=\left\{U\in C^0(\overline{\Omega}),\; \int_\Omega U=0\right\}.$$
One gets clearly $C^0(\overline{\Omega})=ker(A_i^\infty)\oplus Im(A_i^\infty)$.
\noindent Now, we define  the Banach space
$\left(C^0(\overline{\Omega})\right)^{N+1}$
together with the norm
$$\|(U_0,\cdots,U_N)\|_\infty=\sum_{i=0}^N \|U_i\|_\infty.$$
and  the operator $K^\infty=diag(A_i^\infty)$ acting on $\left(C^0(\overline{\Omega})\right)^{N+1}$.
The Kernel and the range of $K^\infty$ are respectively\footnote{In the case of most general operator (see remark \ref{general}), 
one has $ker(A_i^\infty)=span(\phi_i)$ for some positive function $\phi_i$ and $\widetilde{F}_i=ker(A_i)^{\bot}$.
For the sake of simplicity we reduce ourself to the case of operator $A_i^\infty$ s.t. $\phi_i=1$ and $\widetilde{F}_i=span (1)^\perp$ do not depends on $i$.} 

$$E:=ker(K^\infty)=\R^{N+1} \text{ and }F:=Im(K^\infty)=\widetilde{F}^{N+1}.$$
The spaces $E$ and $F$ are cleary two complete subspaces of $\left(C^0(\overline{\Omega})\right)^{N+1}$ and 
one has
$$\left(C^0(\overline{\Omega})\right)^{N+1}=E\oplus F.$$
The projections of $\left(C^0(\overline{\Omega})\right)^{N+1}$ on $E$ and $F$, denoted by $\Pi_E$ and $\Pi_F$ respectively, are given   explicitly    by
$$\Pi_E(V_0,\cdots,V_N)=\frac{1}{|\Omega|}\left(\int_\Omega V_0,\cdots,\int_\Omega V_N\right)\text{ and }\Pi_F=I_d-\Pi_E.$$ 
The restrictions of the norm $\|\cdot \|_\infty$ on   $E$ and $F$ are noted respectivly
$$\|(u_0,\ldots,u_N)\|_E=\sum_{i=0}^N |u_i|,\qquad \|(U_0,\ldots,U_N)\|_F=\sum_{i=0}^N\|U_i\|_\infty.$$
Finally, let us define the norm $\|\cdot \|_{E\times F}$ on the Banach space $E\times F$ by
$$\forall (u,V)\in E\times F,\quad \|(u,V) \|_{E\times F}=\|u\|_E+\|V\|_F.$$

\noindent One verifies easily that the map $E\times F\to \left(C^0(\overline{\Omega})\right)^{N+1}=E\oplus F\; : \;(u,v)\mapsto u+v$ defines an isomorphism between
the banach spaces ($E\times F, \|\cdot\|_{E\times F})$ and $\left(\left(C^0(\overline{\Omega})\right)^{N+1},\; \|\cdot\|_\infty\right)$.
Thus, it is equivalent to obtain estimates on $E\times F$ and on $\left(C^0(\overline{\Omega})\right)^{N+1}$.\\

\noindent The above considerations permits to restate the system $S_\eps$ on an equivalent ``slow-fast'' form by projecting $S_\eps$ on $E$ and $F$ respectivly.
Let $\bW^\eps(t)$ be a solution of $S_\eps$. The slow variable   $X^\eps:=\Pi_E(\bW^\eps)\in E$ is the vector of the mean mass of  resource and species. More  precisely,
$$X^\eps=\left(\frac{1}{|\Omega|}\int_\Omega R^\eps,\frac{1}{|\Omega|}\int_\Omega U_1^\eps,..,\frac{1}{|\Omega|}\int_\Omega U_N^\eps\right)\in\R^{N+1}.$$  
The fast variable is simply $Y^\eps:=\Pi_F \bW^\eps=\bW^\eps-X^\eps\in F$. \\
Furthermore, thanks to the boundary conditions, we have  $\Pi_E (K^\infty\bW^\eps)=0$ and $\Pi_F(K^\infty\bW^\eps)=K^\infty\Pi_F\bW^\eps=K Y^\eps$ 
where we have note $K:=K^\infty_{|F}$ the restriction of $K^\infty$ to $F$. \\ 
Projecting the system $S_\eps$ on $E$ and $F$ yields to the equivalent system

\begin{equation*}\label{fastslow}
	\begin{array}{ll}
			\left(S_\eps^{sf}\right)\;:\; &
							\left\{\begin{array}{l}
 								\frac{d}{dt} X^\eps(t)=\mathcal{F}_0(X^\eps,Y^\eps)\\
								\frac{d}{dt} Y^\eps(t)=\mathcal{G}_1(X^\eps,Y^\eps)+\frac{1}{\eps}KY^\eps\\
								\partial_n X^\eps=0\\
								\partial_n Y^\eps=0\\
								X^\eps(0)=\Pi_E(\bW(0))\\
								Y^\eps(0)=\Pi_F(\bW(0))
							\end{array}\right.
	\end{array}
\end{equation*}
where $\mathcal{F}_0(X^\eps,Y^\eps)=\Pi_E\mathcal{F}(X^\eps+Y^\eps)$ and $\mathcal{G}_1(X^\eps,Y^\eps)=\mathcal{F}(X^\eps+Y^\eps)-\mathcal{F}_0(X^\eps,Y^\eps)$.\\

\noindent In its slow-fast form, the system describes on the one hand the {\it slow } dynamics on the 
kernel $E$ of $K^\infty$, and on the other hand the {\it fast }dynamics 
on the orthogonal $F$ of $E$. These two dynamics are coupled which results in  complex dynamics of $S_\eps^{sf}$. 
However, this complex dynamics may be completly understood using   the central manifold theory.\\
 
Basically (see section \ref{seccentralmanifold} for a precise statement), this theory asserts that there exists a manifold 
$\mathcal{M}^\eps=\{(x,h(x,\eps)),\;x\in E\}\in E\times F$
which is invariant for $S_\eps^{sf}$. It verifies moreover  $h(x^\eps,\eps)=O(\eps)$ and $\mathcal{M}^\eps$ attracts any trajectory 
 exponentially fast in time. 
 The system on $\mathcal{M}^\eps$ reads
\begin{equation}\label{fsap1}
\left(S_\eps^{[\infty]}\right),\quad\frac{d}{dt} X^{\eps,[\infty]}(t)=\mathcal{F}_0(X^{\eps,[\infty]}(t),h(X^{\eps,[\infty]}(t),\eps)),
\, Y^{\eps,[\infty]}(t)=h(X^{\eps,[\infty]}(t),\eps).
\end{equation}
Since $h(x^\eps,\eps)=O(\eps)$  as $\eps\to0$,  one obtains the following  system, as a first approximation.

\begin{equation}\label{fsap0}
\left(S_\eps^{[0]}\right),\quad\frac{d}{dt} X^{[0]}(t)=\mathcal{F}_0(X^{[0]}(t),0), \, Y^{\eps,[0]}(t)=h(X^{\eps,[0]}(t),\eps).
\end{equation}
An important fact in the sequel is that the dynamic of $S_\eps^{[\infty]}$ is completely determined by its first equation: the following {\it O.D.E} system
\begin{equation}
\left(S_\eps^{c}\right),\quad\frac{d}{dt} X^{\eps,[\infty]}(t)=\mathcal{F}_0(X^{\eps,[\infty]}(t),h(X^{\eps,[\infty]}(t),\eps))
\end{equation}
In many cases, $S_\eps^c$ can be seen as a {\it regular perturbation} of the first equation of $S_\eps^{[0]}$, that is
\begin{equation}
\left(S_0^{c}\right),\quad\frac{d}{dt} X^{[0]}(t)=\mathcal{F}_0(X^{[0]}(t),0)
\end{equation}

\subsection{Main results}
The general strategy to prove our results is as follow.\\
When $S_\eps^c$ can be seen as a regular perturbation of $S_0^c$, many properties of $S_0^c$ can be transfer
to $S_\eps^c$ which infers properties of $S_\eps^{[\infty]}$. The system $S_\eps^{[\infty]}$ is exaclty the slow-fast system $S_\eps^{sf}$ reduced to
the invariant  manifold $\mathcal{M}_\eps$. Since  $\mathcal{M}_\eps$ attracts exponentially fast in time 
any trajectectory  of $S_\eps^{[sf]}$, many properties of $S_\eps^{[\infty]}$ yield properties for $S_\eps^{[sf]}$ which is
equivalent to the original system $S_\eps$. This strategy may be summarized as follow.
\begin{figure}[h!]
\centering
\begin{tikzpicture}[x=1cm,y=1cm,>=stealth]
\tikzstyle{site}=[draw, rectangle,rounded corners=3pt, minimum width=1.5cm, minimum height=0.75cm]
\node[site] (1)at(-4,2){$S_0^c$};
\node[site] (2)at(0,2){$S_\eps^c$};
\node[site] (3)at(2,2){$S_\eps^{[\infty]}$};
\node[site] (4)at(6,2){$S_\eps^{[sf]}$};
\node[site] (5)at(8,2){$S_\eps$};
\draw[->,>=latex] (1) to node[below]{\small perturbation} node[above]{\small regular}(2) ;
\draw[<->,>=latex] (2) to (3) ;
\draw[->,>=latex] (3) to node[above]{\small fast attraction} node[below]{\small of $\mathcal{M}_\eps$} (4) ;
\draw[<->,>=latex] (4) to (5) ;
\end{tikzpicture}
\end{figure}

\noindent The essential difficulties in the proofs appear in  transfering some properties from $S_\eps^{[\infty]}$ to $S_\eps^{[sf]}$. This part 
uses strongly  theorem \ref{errorbound}. \\

\noindent In order to apply the above mentioned strategy, the first step is to study $S_0^c$.
\noindent In the case of our system, $S_0^{c}$ reads explicitly
\begin{equation}\label{reduced3}
\begin{cases}
\frac{d}{dt} r= \widetilde{I}-\widetilde{m_0}r-\sum\limits_{i=1}^N \widetilde{f_i}(r) u_i ,\\
\frac{d}{dt} u_i=\left(\widetilde{f_i}(r)-\widetilde{m_i}\right)u_i,\;
i=1,\cdots,N.
\end{cases}
\end{equation}

\noindent where $\widetilde{I}=\frac{1}{|\Omega|}\int_\Omega I(x)dx$, 
$\widetilde{m_0}=\frac{1}{|\Omega|}\int_\Omega m_0(x)dx$ and for $1\leq i\leq N$,
$${\displaystyle\widetilde{m_i}=\frac{1}{|\Omega|}\int_\Omega m_i(x)dx\;\text{ and }\;\widetilde{f_i}(r)=\frac{1}{|\Omega|}\int_\Omega f_i(x,r)dx.}$$

One defines $r_0^*=\widetilde{I}/\widetilde{m_0}$. 
For any $i\in \{1,\cdots,N\}$, since $f_i(x,\cdot)$ is increasing,  
$\widetilde{f}_i(\cdot)$ is an increasing function and
one may define the number $r_i^*$ as shown in the figure \ref{figureri}.

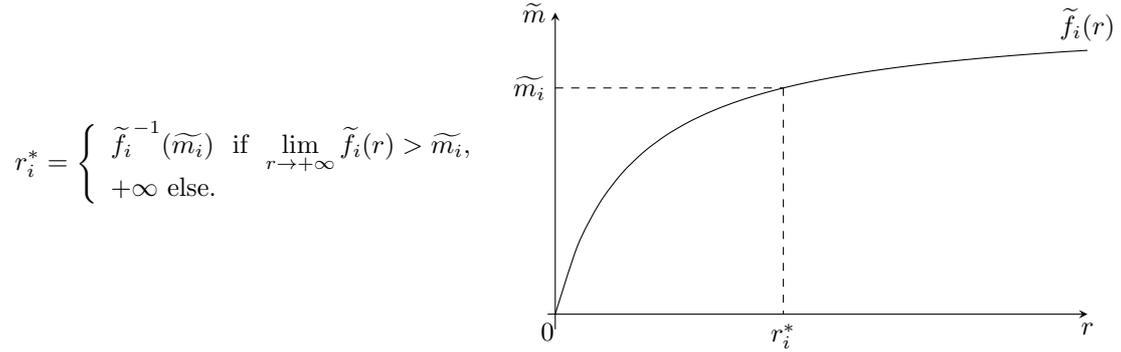
\begin{figure}[!ht]
\centering
 \begin{tikzpicture}[x=1cm,y=2cm,>=stealth]
\draw[->](-0.1,0) -- (7,0) node [below] {$r$};
\draw[->] (0,-0.1) -- (0,2) node [left] {$\widetilde{m}$};
\draw plot [domain=0:7,smooth] (\x,{(2*\x) /(1+\x)}) node [above, sloped] {$\widetilde{f_i}(r)$};
\draw[-,dashed] (0,1.5) node [left] {$\widetilde{m_i}$}-- (3,1.5);
\draw[-,dashed] (3,1.5)--(3,0) node [below] {$r_i^*$};
\draw (-0.1,0) node [below] {$0$};
\draw (-4,1) node  {${\displaystyle r_i^*=\left\{\begin{array}{ll} 
				\widetilde{f_i}^{-1}(\widetilde{m_i})$ \text{ if } 
				${\displaystyle\lim_{r\to +\infty} \widetilde{f_i}(r)>\widetilde{m_i}}, \\
				+\infty \text{ else.}
				\end{array}\right.}$ };
 \end{tikzpicture}
\caption{\label{figureri} Definition of $r_i^*$.}
\end{figure}
\noindent

\noindent The nonnegative stationary solutions of $S_0^c$ are well known  and are described in the following proposition.

\begin{proposition}[Stationnary solutions of the aggregated system $S_0^c$  (see \cite{waltbook})]\label{stationaryagregated}
Under the assumptions \ref{chap5hyp0} and \ref{chap5hypf},we have.
 \begin{itemize}
  \item[(i)] The system $S_0^c$ always admits the stationary solution $p_0^*=(r_0^*,0,\cdots,0)$. 
	     This solution is hyperbolic\footnote{That is, $0$ is not an eigenvalue of $D_X \mathcal{F}_0 (X^{*},0)$.}
 if $r_0^*\neq r_i^*$ for any $i\geq 1$.\\
	     If moreover  $r_0^*< r_i^*$ for all $i\geq 1$. 
	    Then $p_0^*$ is the only nonnegative stationary solution of  $S_0^c$ and is (linearly) asymptotically stable\footnote{An hyperbolic solution $X^{*}$ is say to be (linearly) asymptotically  stable  (resp. unstable) 
if the real part of all the eigenvalue of $D_X \mathcal{F}_0 (X^{*},0)$ is negative 
(resp. if the real part of almost one eigenvalue is positive). In the sequel, we  do not precise (linearly).}.
\item[(ii)] Let $i\in\{1,\cdots,N\}$ and suppose that $r_i^*<r_0^*$ and $r_i^*\neq r_j^*$ for all $j\in\{1,\cdots,N \}\setminus \{i\}$. 
	    Then the system $S_0^c$ has one non-negative stationary solution 
	    $$p_i^*=(r_i^*,0,\cdots,0,u_i^*,0,\cdots,0)\quad 
	    \text{where } u_i^*=\frac{\widetilde{m_0}}{\widetilde{m_i}}\left(r_i^*-r_0^*\right)>0.$$
	    Moreover, this solution is hyperbolic and is asymptoticaly stable if $r_i^*<r_j^*$ for all $j\in\{0,\cdots,N \}\setminus \{i\}$ 
	     and unstable else.
\item[(iii)] Suppose that $r_i^*\neq r_j^*$ for all $i\neq j$ and $r_i^*<r_0^*$ for all $i\geq 1$.  
	     Then the system $S_0^c$ has exactly $N+1$ non-negative stationary solutions:
	     $p_i^*,\; i=0,\cdots,N$. 
	    Moreover, all these solutions are hyperbolic and exactly one of these is stable: 
	    $p_{i_0}^*$ where $r_{i_0}^*=\min\{r_0^*,\cdots,r_N^*\}.$
\end{itemize}
\end{proposition}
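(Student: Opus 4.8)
The plan is to determine all nonnegative equilibria of \eqref{reduced3} by direct algebra, and then to read off hyperbolicity and stability from the spectrum of $D_X\mathcal{F}_0(\cdot,0)$ at each of them. The key preliminary observation is that, since each $\widetilde{f_i}$ is strictly increasing, the scalar equation $\widetilde{f_i}(r)=\widetilde{m_i}$ has the unique root $r=r_i^*$ when $\lim_{r\to+\infty}\widetilde{f_i}(r)>\widetilde{m_i}$ and no root otherwise; in either case $\widetilde{f_i}(r)-\widetilde{m_i}$ carries the sign of $r-r_i^*$. At an equilibrium, the $i$-th species equation $(\widetilde{f_i}(r)-\widetilde{m_i})u_i=0$ forces $u_i=0$ or $r=r_i^*$, so a species can be present only if the resource sits at its break-even level $r_i^*$, and two species $i\neq j$ can be simultaneously present only if $r_i^*=r_j^*$. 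Under the distinctness hypotheses this excludes coexistence, so every admissible equilibrium has at most one nonzero component $u_i$.

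First I would exhibit the equilibria. Putting all $u_i=0$ in the resource equation gives $r=\widetilde{I}/\widetilde{m_0}=r_0^*$, which always yields $p_0^*$. If exactly one species $i$ is present, then $r=r_i^*$ and the resource equation forces $\widetilde{m_i}u_i^*=\widetilde{I}-\widetilde{m_0}r_i^*=\widetilde{m_0}(r_0^*-r_i^*)$, so $u_i^*=\frac{\widetilde{m_0}}{\widetilde{m_i}}(r_0^*-r_i^*)$, admissible ($u_i^*>0$) precisely when $r_i^*<r_0^*$; this produces $p_i^*$ and gives the existence statements of (ii) and (iii). Conversely, when $r_0^*<r_i^*$ for all $i$, any equilibrium with some $u_i>0$ would need $r=r_i^*>r_0^*$, whence $\widetilde{I}-\widetilde{m_0}r=\sum_k\widetilde{f_k}(r)u_k\geq0$ while $\widetilde{I}-\widetilde{m_0}r_i^*<0$, a contradiction; this yields uniqueness of $p_0^*$ in (i) and, together with the exclusion of coexistence, the count of exactly $N+1$ equilibria under (iii).

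Next I would compute $J:=D_X\mathcal{F}_0(\cdot,0)$. Writing $\mathcal{F}_0=(F_0,\dots,F_N)$, its entries are $\partial_r F_0=-\widetilde{m_0}-\sum_k\widetilde{f_k}'(r)u_k$, $\partial_{u_j}F_0=-\widetilde{f_j}(r)$, $\partial_r F_i=\widetilde{f_i}'(r)u_i$ and $\partial_{u_j}F_i=(\widetilde{f_i}(r)-\widetilde{m_i})\delta_{ij}$. At $p_0^*$ the matrix is upper triangular, so its spectrum is $\{-\widetilde{m_0}\}\cup\{\widetilde{f_i}(r_0^*)-\widetilde{m_i}\}_{i=1}^N$; by the sign rule the last eigenvalues have the sign of $r_0^*-r_i^*$, giving hyperbolicity when $r_0^*\neq r_i^*$ for all $i$ and asymptotic stability when $r_0^*<r_i^*$ for all $i$. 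At $p_i^*$ the rows indexed by $j\neq i$ decouple and contribute the eigenvalues $\widetilde{f_j}(r_i^*)-\widetilde{m_j}$, of sign $r_i^*-r_j^*$, while the $(r,u_i)$ variables give the $2\times2$ block
\[
\begin{pmatrix} -\widetilde{m_0}-\widetilde{f_i}'(r_i^*)u_i^* & -\widetilde{m_i}\\ \widetilde{f_i}'(r_i^*)u_i^* & 0\end{pmatrix},
\]
whose trace is $\leq-\widetilde{m_0}<0$ and whose determinant is $\widetilde{m_i}\,\widetilde{f_i}'(r_i^*)u_i^*>0$, so both of its eigenvalues have negative real part. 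Hence $p_i^*$ is hyperbolic and is asymptotically stable iff every decoupled eigenvalue is negative, i.e. iff $r_i^*<r_j^*$ for all $j\neq i$, which together with $r_i^*<r_0^*$ is exactly the minimality condition of (ii). For (iii) I would then note that the unique minimizer $i_0$ of $\{r_0^*,\dots,r_N^*\}$ lies among $\{1,\dots,N\}$ (since $r_i^*<r_0^*$), that $p_{i_0}^*$ is stable, and that every other equilibrium carries a positive eigenvalue ($p_0^*$ through $\widetilde{f_i}(r_0^*)-\widetilde{m_i}$, and $p_i^*$ with $i\neq i_0$ through the decoupled eigenvalue indexed by $i_0$).

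The computations are elementary; the only genuinely load-bearing points are the exclusion of coexistence equilibria (resting on strict monotonicity of the $\widetilde{f_i}$ and the distinctness of the $r_i^*$) and the trace/determinant sign analysis of the $2\times2$ resource--species block, which is the classical chemostat argument that a present species is not destabilized by the resource feedback. The one place demanding slight care is hyperbolicity on the boundary of the positive cone: the determinant of the $2\times2$ block is strictly positive only because $\widetilde{f_i}'(r_i^*)>0$, so I would record explicitly that strict monotonicity of $\widetilde{f_i}$ at $r_i^*$ is what prevents a zero eigenvalue there.
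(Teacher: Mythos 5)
Your proof is correct and complete. Note first that the paper does not actually prove this proposition: it is quoted from the chemostat literature with the tag ``(see \cite{waltbook})'', so there is no internal argument to compare against. What you have written is essentially the classical Smith--Waltman verification, carried out in full: the break-even reduction $u_i(\widetilde{f_i}(r)-\widetilde{m_i})=0$ excluding coexistence under distinctness of the $r_i^*$, the algebraic determination of $p_0^*$ and $p_i^*$, the triangular structure of the Jacobian at $p_0^*$, the decoupling of the absent species' rows at $p_i^*$, and the trace/determinant analysis of the residual $2\times 2$ resource--species block. All of these steps check out. Two remarks. First, your formula $u_i^*=\frac{\widetilde{m_0}}{\widetilde{m_i}}(r_0^*-r_i^*)$ is the correct one; the statement of the proposition carries a sign typo ($r_i^*-r_0^*$), as one can confirm against Proposition \ref{hyp0}(iii), which uses your orientation. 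Second, your closing caveat is well taken and worth keeping: hyperbolicity of $p_i^*$ genuinely requires $\widetilde{f_i}'(r_i^*)>0$, which is not literally implied by ``$f_i(x,\cdot)$ increasing and $C^1$'' in Assumption \ref{chap5hypf} (a strictly increasing $C^1$ function can have a vanishing derivative at an isolated point); this is a standing implicit hypothesis of the chemostat literature rather than something you need to prove, but flagging it is the right instinct since the determinant of the $2\times2$ block degenerates exactly there.
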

The knowledge of the stationary solutions of  $S_0^c$
permits to completely describe  the stationary solutions of  $S_\eps$. This yields our  firth main result, which is proved in section \ref{proofs}.\\

\begin{theorem}[Stationary solutions of the original system $S_\eps$]\label{mains1}
There exist two positive scalars $\eps_0$ and $C$ such that for all $\eps\in (0,\eps_0)$ the following holds.
\begin{itemize}
 \item[(i)] Suppose that $r_0^*< r_i^*$ for all $i\geq 1$. 
	    Then the system $S_\eps$ has only one nonnegative stationary solution 
	    $W_0^\eps(x)=(R_0^\eps(x),0,\cdots,0)$ which is hyperbolic and stable and verifies,
	    $$\|R_0^\eps(\cdot)-r_0^*\|_\infty\leq C \eps.$$
\item[(ii)]Let $i\in\{1,\cdots,N\}$ and suppose that $r_i^*<r_0^*$ and $r_i^*\neq r_j^*$ for all $j\in\{1,\cdots,N \}\setminus \{i\}$.
	   Then the system $S_\eps$ has (at least) one non-negative stationary solution
	   $$W_i^\eps(x)=(R_i^\eps(x),0,\cdots,0,U_i^\eps(x),0,\cdots,0)
	   \text{ which verifies }
	   \|R_i^\eps(\cdot)-r_i^*\|_\infty+\|U_i^\eps(\cdot)-u_i^*\|_\infty \leq C\eps.$$
	  Moreover, $W_i^\eps$ is hyperbolic and is stable 
		    if $r_i^*<r_j^*$ for all $j\in\{0,\cdots,N \}\setminus \{i\}$ 
		    and unstable else.
\item[(iii)]Suppose that $r_i^*\neq r_j^*$ for all $i\neq j$ and $r_i^*<r_0^*$ for all $i\geq 1$.  
	    Then the system $S_\eps$ has exactly $N+1$ non-negative stationary solutions:
	    $W_i^\eps(x),\; i=0,\cdots,N$.
	    Moreover, all these solutions are hyperbolic and exactly one of them is stable: 
	    $W_{i_0}^\eps$ where $r_{i_0}^*=\min\{r_0^*,\cdots,r_N^*\}.$
\end{itemize}
\end{theorem}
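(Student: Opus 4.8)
The plan is to exploit the equivalence between $S_\eps$ and its slow--fast form $S_\eps^{sf}$ together with the invariant manifold $\mathcal{M}^\eps$ furnished by Theorem \ref{centralmanifold}, thereby reducing the infinite--dimensional search for steady states to a finite--dimensional root--finding problem that is a regular perturbation of the equilibrium problem for $S_0^c$. First I would observe that every equilibrium of $S_\eps^{sf}$ necessarily lies on $\mathcal{M}^\eps$: a stationary point generates a constant trajectory whose distance to $\mathcal{M}^\eps$ is constant in time, while Theorem \ref{errorbound} forces that distance to decay exponentially; hence it vanishes and $Y^\eps=h(X^\eps,\eps)$. Consequently the equilibria of $S_\eps$ are in exact bijection with the roots $X\in E=\R^{N+1}$ of the finite--dimensional equation
\begin{equation*}
G(X,\eps):=\mathcal{F}_0(X,h(X,\eps))=0,
\end{equation*}
and, because $h(\cdot,0)=0$, the map $G(\cdot,0)$ is precisely the right--hand side of $S_0^c$. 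I would also record that each coordinate subspace $\{U_j\equiv 0\}$ is invariant for $S_\eps$, so a steady state with only the $i$-th species present is sought inside the corresponding sub-chemostat.

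Second, for each hyperbolic stationary solution $p_i^*$ of $S_0^c$ given by Proposition \ref{stationaryagregated}, hyperbolicity means exactly that $D_X G(p_i^*,0)=D_X\mathcal{F}_0(p_i^*,0)$ is invertible. Since $h=O(\eps)$ and is smooth in $X$, $G$ is a $C^1$, $O(\eps)$-perturbation of $G(\cdot,0)$, so the implicit function theorem yields, for $\eps$ small, a unique root $X_i^\eps$ near $p_i^*$ with $X_i^\eps=p_i^*+O(\eps)$. The associated steady state of $S_\eps$ is then $W_i^\eps=X_i^\eps+h(X_i^\eps,\eps)$; as $p_i^*$ is spatially constant and both $X_i^\eps-p_i^*$ and $h(X_i^\eps,\eps)$ are $O(\eps)$ in sup-norm, this gives the claimed bounds $\|R_i^\eps-r_i^*\|_\infty+\|U_i^\eps-u_i^*\|_\infty\le C\eps$.

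Third comes the transfer of hyperbolicity and of the stability type, which I expect to be the main obstacle---and which is exactly the step the paper flags as delicate (passing from $S_\eps^{[\infty]}$ to $S_\eps^{sf}$). The linearization of $S_\eps^{sf}$ at $W_i^\eps$ splits, thanks to the invariance of $\mathcal{M}^\eps$, into a \emph{slow} block governed by the reduced Jacobian $D_X[\mathcal{F}_0(\cdot,h(\cdot,\eps))](X_i^\eps)$ and a \emph{fast} block whose spectrum lies in $\{\mathrm{Re}\,z\le -\alpha/\eps\}$ because of $\frac{1}{\eps} K$. The fast block is strongly stable, and the slow block is an $O(\eps)$-perturbation of $D_X\mathcal{F}_0(p_i^*,0)$, whose eigenvalues have nonzero real parts with signs dictated by the conditions on the $r_j^*$ (the transverse invasion rate of an absent species $j$ being $\widetilde{f_j}(r_i^*)-\widetilde{m_j}$, negative iff $r_i^*<r_j^*$). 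By continuity of the spectrum the signs persist for small $\eps$, so $W_i^\eps$ is hyperbolic; the reduction principle underlying Theorems \ref{centralmanifold}--\ref{errorbound} (exponential attraction of $\mathcal{M}^\eps$) then shows the stability type of $W_i^\eps$ in the full system coincides with that of $X_i^\eps$ for $S_\eps^c$, hence with that of $p_i^*$.

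Finally, to obtain the exact counts in (i) and (iii)---rather than mere existence---I would argue by compactness. Theorem \ref{chap5thmaj} confines all nonnegative steady states to a fixed bounded set, so all roots of $G(\cdot,\eps)=0$ remain in a compact $\mathcal{K}\subset\R^{N+1}$. If some sequence $\eps_n\to0$ produced a root outside the neighborhoods of the $p_i^*$ provided above, a subsequence would converge to a nonnegative root of $G(\cdot,0)=0$, i.e. to some $p_i^*$, contradicting the local uniqueness from the implicit function theorem for large $n$. Thus for $\eps$ small the roots are exactly the $X_i^\eps$, giving the stated numbers. In case (ii) the hypotheses do not preclude other $p_j^*$ with $r_j^*<r_0^*$, so only existence near $p_i^*$ is asserted, which is why the statement reads ``at least one.''
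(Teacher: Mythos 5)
Your proposal is correct and follows the same skeleton as the paper: every equilibrium of $S_\eps^{sf}$ is trapped on $\mathcal{M}^\eps$ (the paper's Lemma \ref{Csteadystate}), the search is reduced to the finite-dimensional equation $\mathcal{F}_0(X,h(X,\eps))=0$, and the implicit function theorem applied near each hyperbolic $p_i^*$ of Proposition \ref{stationaryagregated} produces $X_i^\eps=p_i^*+O(\eps)$ (the paper's Lemma \ref{stationaryS0}); your remark that the absent-species components must vanish identically, by local uniqueness within the invariant sub-chemostat, is exactly the content of the paper's Lemmas \ref{lemmaconservation} and \ref{positivity}. The genuine divergence is in the stability-transfer step, which the paper itself flags as the main difficulty. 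You argue spectrally: the linearization at $W_i^\eps$ splits into a slow block $O(\eps)$-close to $D_X\mathcal{F}_0(p_i^*,0)$ and a fast block with spectrum in $\{\mathrm{Re}\,z\leq -\alpha/\eps\}$. This is the classical reduction principle (Carr), and it is legitimate, but note that the splitting is not free: the off-diagonal couplings $D_Y\mathcal{F}_0$ and $D_X\mathcal{G}_1$ are $O(1)$, so you must either invoke the invariant foliation complementary to $\mathcal{M}^\eps$ or carry out a resolvent estimate showing that $O(1)$ eigenvalues force the fast component of the eigenvector to be $O(\eps)$, and you need a spectrum-determined growth condition for the resulting semigroup to pass from linear to nonlinear stability. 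The paper deliberately sidesteps all of this with a purely dynamical argument: Lemma \ref{bassin} shows the basin of attraction of $p^\eps$ for the reduced flow $S_\eps^c$ has a radius bounded below uniformly in $\eps$, and the shadowing estimate of Theorem \ref{errorbound} then drags full trajectories into that basin. Your approach buys sharper spectral information at the cost of extra functional-analytic justification; the paper's buys robustness (no spectral analysis of the full linearization) at the cost of the somewhat delicate uniform-basin lemma. Finally, your compactness argument for the exact counts in (i) and (iii) (all nonnegative equilibria live in the $\eps$-uniform bounded set of Theorem \ref{chap5thmaj}, so any stray sequence of roots would accumulate on a nonnegative equilibrium of $S_0^c$, contradicting local uniqueness) is a welcome addition: the paper states this count as Proposition \ref{main1general}(iii) but does not write out its proof.
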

If in addition, the global dynamics of $S_0^c$ is known, then so  is the global dynamics of $S_\eps$.
The system $S_0^c$ being a homogeneous chemostat model, for a large choice of functions 
$\widetilde{f_i}$, it verifies the Competitive Exclusion Principle (CEP).\\
 More precisly,
 it is known that if $r_i^*>r_0^*$ then $u_i(t)\to 0$ as $t\to +\infty$, 
 therefore if $r_0^*<r_i^*$ for all $i\geq 1$, then the only steady state $(r_0^*,0,\cdots,0)$ of $S_0^c$ is a global attractor (in the nonnegative cadrant $\R_+^{N+1}$). 

\noindent If for some $i\geq 1$ one has $r_i^*<r_0^*$ then the global dynamics of $S_0^c$ is known under some additional assumptions.
Here, we make the following assumption on $S_0^c$ which is sufficient\footnote{
The proposition \ref{hyp0} holds true under more general hypothesis, see the monograph of Smith and Waltmann  \cite{waltbook}. Indeed, a well known conjecture asserts that the CEP holds true under 
 the simpler hypothesis of monotonicity of the functions $f_i$. This result is proven for equal mortalities in Amstrong and McGehee \cite{ArMg} (1980). 
In the case of different mortalities, this result is proven using Lyapunov functionals when the functions $\widetilde{f_i}$ 
verify some additional assumption. 
We refers to Hsu \cite{Hsu} (1978), Wolkowicz and Lu \cite{WL} (1992), Wolkowicz and Xia \cite{WX} (1997) and Li \cite{LI} (1998) for 
historical advances on this topic. See also  Sari and Mazenc \cite{SaMa} (2011) for recent results on this subject.
  }
 to ensure that $S_0^c$ satisfies the CEP.
\begin{assumption}\label{hyp0true}
One assumes that $\widetilde{f_i}$ is increasing and that either
\begin{itemize}
\item[(i)] For each $i\in\{1,\cdots,N\}$ one has $\widetilde{m_i}=\widetilde{m_0}>0$.
\item[(ii)] For each $i\in\{1,\cdots,N\}$, $\widetilde{f_i}$ reads  $\widetilde{f_i}(r)=c_i f(r)$ for some (increasing) function $f$ and positive constant $c_i$.
\item[(iii)] For each $i\in\{1,\cdots,N\}$, $\widetilde{f_i}$ reads $\widetilde{f_i}(r)=\frac{c_ir}{k_i+r}$ for some 
positive constants $c_i$ and $k_i$.
\end{itemize}
\end{assumption}

Under this assumption,  the asymptotic dynamics of $S_0^c$ (and all its sub-systems) are known in the following sense 
(see \cite{waltbook} for a proof).
\begin{proposition}[CEP for the aggregated system $S_0^c$ (see \cite{waltbook})]\label{hyp0}
Assume that the assumption   \eqref{hyp0true} holds true. 
Let $(r(t),u_1(t),\cdots,u_N(t))$ be a solution of $S_0^c$ 
with nonnegative initial conditions.\\
Define the set $J=\{0\}\cup\{j\in\{1,\cdots,N\},\; u_j(0)>0,\; r_j^*< r_0^*\}$
and the number  ${\displaystyle\widehat{r}=\min_{j\in J}(r_j^*)}$. We have\\
\begin{enumerate}[(i)]
\item ${\displaystyle\lim_{t\to+\infty}r(t)= \widehat{r}}$ and $\forall i\notin J,\; {\displaystyle \lim_{t\to +\infty} u_i(t)=0.}$ 
 \item In particular, if $J=\{0\}$ then $p_0^*:=(r_0^*,0,\cdots,0)$ is a global attractor in $\R_+^{N+1}$.
 \item If for some $j_1\in J\setminus \{0\}$ one has $r_{j_1}^*<r_{j}^*$ for any $j\in J\setminus \{j_1\}$ then
$$\lim_{t\to+\infty}u_{j_1}(t)=\frac{\widetilde{m_0}}{\widetilde{m_{j_1}}}\left(r_0^*-r_{j_1}^*\right)\text{ and } \lim_{t\to+\infty}u_{j}(t)=0,
\; \forall j\in J\setminus\{0,j_1\}$$
\end{enumerate}

\end{proposition}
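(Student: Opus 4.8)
The plan is to prove the Competitive Exclusion Principle for $S_0^c$ in three steps: elementary reductions, a Lyapunov functional tailored to the structure imposed by Assumption \ref{hyp0true}, and an application of LaSalle's invariance principle to identify the limit.

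\emph{Preliminary reductions.} First I would check that $\R_+^{N+1}$ is positively invariant: on $\{u_i=0\}$ one has $\frac{d}{dt}u_i=0$, and on $\{r=0\}$ one has $\frac{d}{dt}r=\widetilde I\geq 0$. In particular any species absent at $t=0$ stays absent, so it suffices to follow the indices with $u_j(0)>0$. Boundedness follows from the total mass $\Sigma:=r+\sum_{i}u_i$, which satisfies
\[
\frac{d}{dt}\Sigma=\widetilde I-\widetilde{m_0}r-\sum_i\widetilde{m_i}u_i\leq \widetilde I-\mu\,\Sigma,\qquad \mu:=\min(\widetilde{m_0},\min_i\widetilde{m_i})>0,
\]
so every trajectory is bounded and has a nonempty compact $\omega$-limit set. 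Since $r_j^*<r_0^*$ is equivalent to $\widetilde{f_j}(r_0^*)>\widetilde{m_j}$, only the indices in $J\setminus\{0\}$ have strictly positive per-capita growth at the washout value $r_0^*$, which singles them out as the only possible survivors.

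\emph{Lyapunov functional.} Let $j_1\in J$ realize $\widehat r=\min_{j\in J}r_j^*$. When $j_1\in J\setminus\{0\}$ (the situation of (iii), and of (ii) once the comparison is reduced to the single function $f$), I would test the candidate equilibrium $p_{j_1}^*=(r_{j_1}^*,\dots,u_{j_1}^*,\dots)$ with
\[
V=\int_{r_{j_1}^*}^{r}\frac{\widetilde{f_{j_1}}(s)-\widetilde{m_{j_1}}}{\widetilde{f_{j_1}}(s)}\,ds+\left(u_{j_1}-u_{j_1}^*-u_{j_1}^*\ln\frac{u_{j_1}}{u_{j_1}^*}\right)+\sum_{j\neq j_1}\delta_j\,u_j,
\]
with weights $\delta_j>0$ to be fixed. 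Monotonicity of $\widetilde{f_{j_1}}$ makes the integrand negative for $r<r_{j_1}^*$ and positive for $r>r_{j_1}^*$, so the first two blocks are nonnegative and vanish only at $p_{j_1}^*$, while the last block is nonnegative. Differentiating along $S_0^c$ and using the explicit form of the response functions in Assumption \ref{hyp0true} to control the cross terms, I would choose the $\delta_j$ so that the coefficient of each $u_j$, $j\neq j_1$, is strictly negative (this is exactly where $r_{j_1}^*<r_j^*$ enters), obtaining $\frac{d}{dt}V\leq 0$. The case $J=\{0\}$ is handled by the same functional centered at the washout equilibrium $(r_0^*,0,\dots,0)$, and case (i) is treated separately: there the conservation law $\frac{d}{dt}\Sigma=\widetilde I-\widetilde{m_0}\Sigma$ forces $\Sigma\to r_0^*$, so $S_0^c$ is asymptotically autonomous and one concludes on the invariant hyperplane $\{\Sigma=r_0^*\}$ via the theory of asymptotically autonomous systems combined with the reduced functional.

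\emph{Conclusion and main obstacle.} LaSalle's invariance principle then confines the $\omega$-limit set to the largest invariant subset of $\{\frac{d}{dt}V=0\}$; analyzing this set yields $u_j\to 0$ for all $j\neq j_1$ and $(r,u_{j_1})\to(r_{j_1}^*,u_{j_1}^*)$ with $u_{j_1}^*=\frac{\widetilde{m_0}}{\widetilde{m_{j_1}}}(r_0^*-r_{j_1}^*)$, which gives (iii); specializing to $J=\{0\}$ gives $r\to r_0^*=\widehat r$ and every $u_j\to 0$, which is (ii); and collecting both gives $r\to\widehat r$ and $u_i\to0$ for $i\notin J$, which is (i). I expect the main obstacle to be the construction of the Lyapunov functional when the removal rates $\widetilde{m_i}$ differ, namely the choice of the weights $\delta_j$ guaranteeing $\frac{d}{dt}V\leq0$ on the whole cone: for general increasing $\widetilde{f_i}$ this is only conjectural, and it is precisely the special forms in Assumption \ref{hyp0true} (equal mortalities, proportional responses, or Michaelis-Menten) that make the sign computation go through. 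A secondary technical point is verifying that the largest invariant set inside $\{\frac{d}{dt}V=0\}$ reduces to the single equilibrium.
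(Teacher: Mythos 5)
A preliminary remark on the comparison: the paper does not prove Proposition \ref{hyp0} at all — it is imported verbatim from Smith and Waltman \cite{waltbook} and the footnoted literature (Hsu, Armstrong--McGehee, Wolkowicz--Lu, Wolkowicz--Xia, Li, Sari--Mazenc) — so the benchmark is that literature rather than an in-paper argument. Your outline does reproduce the classical strategy of those sources: positive invariance and dissipativity via the total mass $\Sigma$, the Hsu/Wolkowicz--Lu functional combining an integral term in $r$, a Volterra term in the resident $u_{j_1}$ and weighted linear terms in the competitors, LaSalle's principle to identify the limit, and the conservation-law/asymptotically-autonomous reduction in the equal-mortality case. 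The limiting value $\frac{\widetilde{m_0}}{\widetilde{m_{j_1}}}(r_0^*-r_{j_1}^*)$ you obtain is the correct one, and your observation that $r_j^*<r_0^*$ is equivalent to $\widetilde{f_j}(r_0^*)>\widetilde{m_j}$ is the right way to read the set $J$.

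As a proof, however, the attempt has a genuine gap, and it sits exactly at the theorem's center of gravity. Everything up to and including the choice of $V$ is routine; after the cancellation of the $u_{j_1}$ cross terms (which your $V$ does achieve), the $u$-free remainder is nonpositive by monotonicity alone, and the entire content of the result is the verification that the weights $\delta_j$ can be chosen so that, for every $j\neq j_1$ and every $r$ in the attracting region, the coefficient $\delta_j\bigl(\widetilde{f_j}(r)-\widetilde{m_j}\bigr)-\widetilde{f_j}(r)\bigl(\widetilde{f_{j_1}}(r)-\widetilde{m_{j_1}}\bigr)/\widetilde{f_{j_1}}(r)$ of $u_j$ in $\frac{d}{dt}V$ is nonpositive. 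It is precisely here — not merely ``where $r_{j_1}^*<r_j^*$ enters'' — that the three cases of Assumption \ref{hyp0true} are used in three genuinely different ways; for Michaelis--Menten responses with distinct $k_i$ the existence of admissible $\delta_j$ is a nontrivial case analysis that occupies most of the Wolkowicz--Lu paper. You defer this computation and describe it as ``only conjectural'' for general monotone responses, which is an accurate description of the state of the art but also an admission that the proof is not carried out where it matters. The secondary step you flag — that the largest invariant subset of $\{\frac{d}{dt}V=0\}$ reduces to the claimed limit, and likewise the conclusion of the asymptotically autonomous reduction in the equal-mortality case — is also asserted rather than executed. In short: the road map is the right one and matches the cited proofs, but the two steps that constitute the mathematical substance of the proposition are both missing.
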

Note that, from the assumption \ref{chap5hypf}, $\widetilde{f_i}$ is increasing. In practice, one has to  compute the functions $\widetilde{f_i}$ 
explicitly to verify the assumption \ref{hyp0true}.
Here are  some explicit examples ensuring that the assumption \ref{hyp0true}  holds true.
\begin{itemize}
\item[(i)] Assume that $m_i(x)=m_0(x)$ for any $x\in \Omega$. Then the case $(i)$ of the assumption \ref{hyp0true} occurs.
\item[(ii)] Assume that $f_i(x,R)=C_i(x)f(R)$ for some smooth positive functions $C_i:\Omega\to\R^+$ and $f:\R_+\to \R_+$. Then 
$$\widetilde{f_i}(r)=c_if(r),\;\text{ where } c_i=\frac{1}{|\Omega|}\int_\Omega C_i(x)dx$$ and the case $(ii)$ of the assumption \ref{hyp0true} occurs.
\item[(ii')] Assume that for each $i\geq 2$,  $f_i(x,R)=c_i f_1(x,R)$ for some positive constant $c_i$. Then 
$\widetilde{f_i}(r)=c_i\widetilde{f_1}(r)$ and the case $(ii)$ of the assumption \ref{hyp0true} occurs.
\item[(iii)] Assume that $f_i(R,x)=\frac{C_i(x)R}{k_i+R}$ where $k_i$ is a positive constant. Then 
$$\widetilde{f_i}(r)=\frac{c_ir}{k_i+r}\;\text{ where } c_i=\frac{1}{|\Omega|}\int_\Omega C_i(x)dx$$
and the cases $(iii)$ of the assumption \ref{hyp0true} occurs.
\end{itemize}

\noindent Now, we are  in position to state our main result. Let us denote the non-negative cadrant of $\left(C^0(\overline{\Omega})\right)^{N+1}$ by
$$Q=\left\{V(\cdot)\in C^0(\overline{\Omega}),\;V(x)\geq 0,\,\forall x \in \overline{\Omega}\right\}^{N+1}.$$
Thanks to the crucial uniform boudedness result (theorem \ref{chap5thmaj}), one obtains the global dynamics in $Q$ for small $\eps$.
\begin{theorem}[CEP for the original system $S_\eps$]\label{mains2}
Assume that the assumptions \eqref{chap5hyp0} and \eqref{chap5hypf} hold true. For each $i$, 
denote $\bW_i^\eps(x)$ the stationary solution of $S_\eps$ as defined in the Theorem \ref{mains1}.
There exists $\eps_0>0$ such that for all $\eps\in (0,\eps_0)$ and  initial data 
$\bW^\eps(\cdot,0)\in Q$,
one has the following properties.
\begin{itemize}
\item[(i)] Let $i\in\{1,\cdots,N\}$. If $r_i^*>r_0^*$ then ${\displaystyle\lim_{t\to \infty} \|U_i^\eps(\cdot,t)\|_\infty=0}$.
\item[(ii)] Assume that $r_0^*<r_i^*$ for all $i\geq 1$. Then every solution $\bW^\eps(x,t)$ of $S_\eps$  verifies
$$\lim_{t\to +\infty} \|\bW^\eps(\cdot,t)-\bW_0^\eps(\cdot)\|_\infty=0.$$ 
\item[(iii)] Assume that $r_1^*<r_i^*$  for all $i\neq1$ and that the assumption \ref{hyp0}  holds. 
Then every solution $\bW^\eps(x,t)$ of $S_\eps$ with  nonnegative initial data verifying $U_1^\eps(x,0)> 0$ for some $x\in \Omega$ verifies
$$\lim_{t\to +\infty} \|\bW^\eps(\cdot,t)-\bW_1^\eps(\cdot)\|_\infty=0.$$
\end{itemize}
\end{theorem}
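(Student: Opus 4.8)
The plan is to follow the reduction chain displayed before the statement: transport the global dynamics of the aggregated ODE system $S_0^c$ — whose asymptotic behaviour is entirely described by Proposition \ref{hyp0} — first to the reduced ODE $S_\eps^c$ by a regular perturbation argument, and then to $S_\eps$ itself through the central manifold. I would start from the facts supplied by Theorems \ref{centralmanifold} and \ref{errorbound}: for $\eps$ small the manifold $\mathcal{M}^\eps=\{(x,h(x,\eps))\}$ is invariant, satisfies $h=O(\eps)$, and attracts every trajectory of $S_\eps^{sf}$ exponentially fast. Consequently the $\omega$-limit set of any nonnegative (hence, by Theorem \ref{chap5thmaj}, uniformly bounded) solution lies on $\mathcal{M}^\eps$ and is the lift of the $\omega$-limit set of a trajectory $X^\eps(t)$ of the $(N+1)$-dimensional system $S_\eps^c$. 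Since $h=O(\eps)$, the vector field $X\mapsto\mathcal{F}_0(X,h(X,\eps))$ of $S_\eps^c$ equals that of $S_0^c$ up to an $O(\eps)$ term, uniformly on the compact absorbing set of Theorem \ref{chap5thmaj}; thus $S_\eps^c$ is a regular perturbation of $S_0^c$. I write $\bar R^\eps,\bar U_i^\eps$ for the components of $X^\eps$, i.e. the spatial means.

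For (i) I would argue at the level of the slow variable. Integrating the $U_i$-equation over $\Omega$ and using the Neumann condition kills the diffusion term, so $\dt\bar U_i^\eps=\frac{1}{|\Omega|}\int_\Omega(f_i(x,R^\eps)-m_i(x))U_i^\eps$, which on $\mathcal{M}^\eps$ reads $(\widetilde{f_i}(\bar R^\eps)-\widetilde{m_i})\bar U_i^\eps+O(\eps)$. The resource mean obeys $\limsup_{t}\bar R^\eps(t)\le r_0^*+C\eps$ (the computation valid for $S_0^c$, plus the $O(\eps)$ correction). Since $r_i^*>r_0^*$, for $\eps$ small one gets $\widetilde{f_i}(\bar R^\eps(t))-\widetilde{m_i}\le-\delta<0$ for $t$ large, whence $\bar U_i^\eps(t)\to0$. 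Because $U_i^\eps\ge0$ and the species-$i$ component of $h$ is, to leading order, proportional to $\bar U_i^\eps$ (the reaction term $(f_i-m_i)U_i$ being linear in $U_i$), the full profile stays within an exponentially small distance of $\bar U_i^\eps$, so $\|U_i^\eps(\cdot,t)\|_\infty\to0$.

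Part (ii) is then immediate: if $r_0^*<r_i^*$ for every $i\ge1$, all species go extinct by (i), the scalar resource equation becomes asymptotically autonomous with globally attracting root $r_0^\eps=r_0^*+O(\eps)$, hence $X^\eps\to p_0^\eps$ and, lifting through the manifold, $\bW^\eps(\cdot,t)\to p_0^\eps+h(p_0^\eps,\eps)=\bW_0^\eps$ in $\|\cdot\|_\infty$. For part (iii) I would transport the global asymptotic stability of the winner equilibrium $p_1^*$ of $S_0^c$ (Proposition \ref{hyp0}(iii)) to $S_\eps^c$. Under Assumption \ref{hyp0true} the chemostat $S_0^c$ admits a strict Lyapunov function $V$ with $\dot V\le0$ and $\dot V<0$ off $p_1^*$ on the invariant face $\{u_1>0\}$; since $\dot V$ along $S_\eps^c$ differs from $\dot V$ along $S_0^c$ by $O(\eps)$, on the compact absorbing set minus any neighbourhood of $p_1^*$ one still has $\dot V\le-\delta/2$ for $\eps$ small. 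Combined with the local hyperbolic stability of the perturbed equilibrium $\bW_1^\eps$ from Theorem \ref{mains1}, and with uniform persistence of species $1$ (its invasion rate $\widetilde{f_1}(\bar R^\eps)-\widetilde{m_1}$ stays positive near the boundary because $r_1^*$ is the strict minimum), this forces $X^\eps(t)\to p_1^\eps$ for every trajectory with $\bar U_1^\eps(0)>0$; lifting through $\mathcal{M}^\eps$ gives $\|\bW^\eps(\cdot,t)-\bW_1^\eps\|_\infty\to0$.

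The main obstacle is precisely this last transfer in (iii): global asymptotic stability is not preserved by an arbitrary regular perturbation, so one must genuinely exploit the Lyapunov structure of the chemostat guaranteed by Assumption \ref{hyp0true} (rather than mere hyperbolicity), together with a uniform persistence estimate ensuring that no trajectory starting with $u_1>0$ is captured by a boundary equilibrium. Controlling the $O(\eps)$ error uniformly on the whole absorbing set, and not just near the equilibria, is the delicate quantitative point, and this is where the precise estimate of Theorem \ref{errorbound} is essential.
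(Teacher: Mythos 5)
Your overall architecture (aggregated system $\to$ reduced ODE $\to$ original PDE via the central manifold and the shadowing estimates of Theorem \ref{errorbound}) is the paper's, and your treatment of (i)--(ii) is essentially right — including the observation that the $i$-th component of $h$ factors through $u_i$, which the paper makes exact in Lemma \ref{lemmaconservation} (derived from invariance of the face $\{u_i=0\}$ and of the positive cone, not from linearity of the reaction term); this exact factorization is what makes the error \emph{multiplicative} in $u_i$ and lets you conclude $u_i\to0$ rather than merely $u_i=O(\eps)$. The genuine gap is in (iii). The Lyapunov route you propose is not available from the hypotheses: the functionals behind Proposition \ref{hyp0} (Hsu, Wolkowicz--Lu, \dots) are LaSalle-type, with $\frac{d}{dt}V=0$ on sets much larger than the equilibrium, so the strict bound $\frac{d}{dt}V\le-\delta$ off a neighbourhood of $p_1^*$ that you need in order to absorb the $O(\eps)$ perturbation does not hold, and a LaSalle argument does not survive an $O(\eps)$ perturbation of the vector field. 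The paper sidesteps this entirely (Proposition \ref{main2general}): since solutions are uniformly bounded (Theorem \ref{chap5thmaj}), the shadowing principle holds with $T=+\infty$, so the slow variable stays within $C\left(\eps+e^{-\mu' t/\eps}\right)$ of an \emph{exact} trajectory of $S_0^c$ for all time; that trajectory converges to $p_1^*$ by Proposition \ref{hyp0}, hence the true trajectory eventually enters a ball of radius $r/2$ around the corresponding steady state, where $r$ is the $\eps$-\emph{uniform} basin radius supplied by Lemma \ref{bassin}; local stability then finishes. No global statement about the perturbed flow $S_\eps^c$ is ever needed.

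There is a second gap: the shadow trajectory starts from a \emph{modified} initial datum $X_0^\eps$, only $\eps$-close to $X_0$, and nothing a priori prevents its species-$1$ component from vanishing even when $U_1^\eps(\cdot,0)\not\equiv0$ (think of data with $\int_\Omega U_1(x,0)\,dx=O(\eps)$). If that happened, the shadow trajectory would live on the invariant face $u_1=0$ and converge to the wrong equilibrium. Your appeal to ``uniform persistence of species $1$'' is exactly the statement that must be proved, uniformly in $\eps$, and it is not a soft consequence of the positivity of the invasion rate. This is the content of Lemma \ref{lemmarepuls}(ii), proved by contradiction: if $u_1^\eps(0)=0$, the shadowing forces $\|U_1^\eps(\cdot,t)\|_\infty\le Ce^{-\mu' t/\eps}$ while $R^\eps$ approaches some $r_k^*>r_1^*$; testing the $U_1$-equation against the positive principal eigenfunction of $f_1(\cdot,\widehat r)-m_1+\frac{1}{\eps}A_1$, whose principal eigenvalue tends to $\widetilde{f_1}(\widehat r)-\widetilde{m_1}>0$, shows that $\int_\Omega U_1^\eps\phi_\eps$ must eventually increase — a contradiction. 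Without this step (and the preliminary comparison argument giving $R^\eps\ge\alpha>0$ after finite time, which places the data in $Q_1(\alpha)$), the proof of (iii) is incomplete.
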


\section{General results for slow-fast system}
\noindent In this section we state precisly the  Central manifold Theorem \ref{centralmanifold} and 
the Theorem of convergence towards the central manifold \ref{errorbound}. These theorems may be proved following \cite{Castella2009}. 
Next, we state and prove two general results for fast-slow systems: propositions \ref{main1general} and \ref{main2general}. 
These propositions are used in section \ref{proofs}
to prove the Theorems \ref{mains1} and \ref{mains2}.
\subsection{Central Manifold Theorem}\label{seccentralmanifold}

Let us begin by a version of the central manifold Theorem used in this paper. 
This Theorem claims the existence of an invariant manifold for the slow-fast system which allows to defined several reduced systems.

\begin{theorem}[Central manifold Theorem]\label{centralmanifold}
Let  $E$ and $F$ be two Banach spaces.
Define $\mathcal{F}_0(X,Y)\in C^1(E\times F;E)$ and $\mathcal{G}_0(X,Y)\in C^1(E\times F;F)$. 
One assumes that $\mathcal{F}_0$ and $\mathcal{G}_1$ are uniformly bounded as well than there first derivatives.
 Let $K$ be an operator with  domain  $\mathcal{D}(K)\subset F$. 
 One assumes that  $K$ generates an analytical semi-group   $exp(tK)$ of 
 linearly  operators on $F$ and that there exists  $\mu>0$ such that
$$\forall t\geq0, \quad\forall \eps\in(0,1],\quad 
\left\|exp\left(\frac{t}{\eps}K\right)Y\right\|_{F}\leq C\|Y\|_{F} exp\left(-\mu\frac{t}{\eps}\right).$$

\noindent For all initial condition $(x_0,y_0)\in E\times F$ and, for all $\eps\in(0,1]$, 
on defines  $X^\eps(t,x_0,y_0)\equiv X^\eps(t)$ and $Y^\eps(t,x_0,y_0)\equiv Y^\eps(t)$ the solution,  
for $t\geq 0$, of the differential system
\begin{equation*}
\begin{array}{cc}
S_\eps^{sf}&\left\{\begin{array}{l} 
\frac{d}{dt}X^\eps(t)=\mathcal{F}_0(X^\eps(t),Y^\eps(t)),\\
\frac{d}{dt}Y^\eps(t)=\mathcal{G}_1(X^\eps(t),Y^\eps(t))+\frac{1}{\eps}K Y^\eps(t) \\
 X^\eps(0)=x_0,\quad Y^\eps(0)=y_0.\\
\end{array}\right.\end{array}\end{equation*}

Then, there exists $\eps_0>0$ such that, for all $\eps\in(0,\eps_0)$, the system  $S_\eps^{sf}$ 
admit a central manifold  $\mathcal{M}^\eps$ in the following sense.\\

\noindent There exists a function $h(X,\eps)\in C^1(E\times [0,\eps_0];F)$ such that, for all $\eps\in]0,\eps_0]$, the set 
$\mathcal{M}^\eps=\{(X,h(X,\eps));X\in E\}$ is invariant under the semi flow generated by $S_\eps^{sf}$ for $t\geq 0$. 
Moreover, $$\|h(\cdot,\eps)\|_{L^\infty(E,F)}=O(\eps)\text{ as }\eps\to 0.$$
\end{theorem}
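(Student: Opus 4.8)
The plan is to construct the graph function $h$ by the Lyapunov--Perron method, realizing $\mathcal{M}^\eps$ as the set of initial data that lie on trajectories of $S_\eps^{sf}$ remaining bounded, in a suitable exponentially weighted norm, as $t\to-\infty$. The point is that the linear dynamics on $E=\ker K$ vanishes (pure ``center'' directions), while $K$ on $F$ generates the semigroup $\exp(tK/\eps)$ that contracts at rate $\mu/\eps$ forward in time, hence \emph{blows up backward}; a trajectory lying on an invariant graph over $E$ is therefore exactly one whose fast component $Y$ is slaved to the slow component $X$ so as to avoid this backward blow-up. Following \cite{Castella2009}, I would work throughout with the mild (variation-of-constants) formulation, so that the unboundedness of $K$ is only ever felt through the semigroup and the decay hypothesis.

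Concretely, for a fixed slow datum $x_0\in E$ I would seek a pair $(X,Y)$ on $(-\infty,0]$ solving
\[
X(t)=x_0+\int_0^t \mathcal{F}_0\bigl(X(s),Y(s)\bigr)\,ds,\qquad
Y(t)=\int_{-\infty}^t \exp\!\left(\frac{t-s}{\eps}K\right)\mathcal{G}_1\bigl(X(s),Y(s)\bigr)\,ds,
\]
and then set $h(x_0,\eps):=Y(0)$. The lower limit $-\infty$ in the $Y$-equation precisely discards the homogeneous mode $\exp(tK/\eps)Y(0)$, which is the direction that is unbounded as $t\to-\infty$; the convolution converges because of the assumed bound $\|\exp(tK/\eps)\,\cdot\|_F\le C\,\|\cdot\|_F\,\exp(-\mu t/\eps)$ together with the uniform boundedness of $\mathcal{G}_1$.

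The existence and uniqueness of $(X,Y)$ I would obtain by a contraction argument on the Banach space of continuous maps $Z=(X,Y):(-\infty,0]\to E\times F$ equipped with the weighted norm $\|Z\|_\eta=\sup_{t\le 0}e^{\eta t}\|Z(t)\|_{E\times F}$, the weight $e^{\eta t}$ allowing the at-most-linear backward growth of $X$ forced by the bounded field $\mathcal{F}_0$. Fixing $\eta$ larger than the Lipschitz constant of $\mathcal{F}_0$ and then taking $\eps_0<\mu/\eta$, the operator $\mathcal{T}$ defined by the two right-hand sides above maps the space into itself and is a contraction: the $X$-component contributes a factor bounded by $\mathrm{Lip}(\mathcal{F}_0)/\eta<1$, while the $Y$-component contributes $\int_{-\infty}^t Ce^{-\mu(t-s)/\eps}\,ds\,\mathrm{Lip}(\mathcal{G}_1)=\tfrac{C\eps}{\mu}\,\mathrm{Lip}(\mathcal{G}_1)$, which is small once $\eps$ is small. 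This yields a unique fixed point depending on $x_0$, and applying the very same $Y$-estimate to the fixed point itself gives $\|Y(0)\|_F\le \tfrac{C\eps}{\mu}\,\|\mathcal{G}_1\|_\infty$, i.e. $\|h(\cdot,\eps)\|_{L^\infty(E,F)}=O(\eps)$, which is exactly the stated bound. Invariance of $\mathcal{M}^\eps=\{(X,h(X,\eps))\}$ then follows from the time-translation structure of the integral equations: shifting a solution through $(x_0,h(x_0,\eps))$ by $\tau$ solves the same equation through $(X(\tau),Y(\tau))$, forcing $Y(\tau)=h(X(\tau),\eps)$.

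The step I expect to be the main obstacle is the regularity $h\in C^1\!\bigl(E\times[0,\eps_0];F\bigr)$; the fixed-point argument gives Lipschitz dependence on $x_0$ and $\eps$ cheaply, but genuine $C^1$ smoothness is more delicate. I would establish it by the fiber-contraction theorem: formally differentiate the fixed-point equation in $x_0$, observe that the resulting linearized operator is again a uniform contraction on the weighted space (with the same small constants), and then verify that its fixed point really is the Fr\'echet derivative of $Z(\cdot;x_0)$, continuity of the derivative in $x_0$ and $\eps$ following from the uniformity of the contraction estimates. Here the analyticity of $\exp(tK/\eps)$ provides the smoothing needed to absorb the unboundedness of $K$ in the differentiated equations, and the global boundedness of $\mathcal{F}_0,\mathcal{G}_1$ together with their first derivatives is precisely what makes the whole construction global, so that no cut-off of the nonlinearities is required.
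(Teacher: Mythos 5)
The paper never proves Theorem \ref{centralmanifold}: it is imported as a known result, with the reader referred to \cite{Castella2009} (and ultimately to Carr \cite{Ca}), so there is no in-paper argument to compare against. Your Lyapunov--Perron construction is, in substance, the standard proof carried out in those references, and the quantitative details check out: with the weight $e^{\eta t}$ and $\eta>\mathrm{Lip}(\mathcal{F}_0)$ the slow component contributes a contraction factor $\mathrm{Lip}(\mathcal{F}_0)/\eta<1$, while for $\eps<\mu/\eta$ one has $\int_{-\infty}^t e^{-\mu(t-s)/\eps}e^{-\eta s}\,ds=\tfrac{\eps}{\mu-\eps\eta}\,e^{-\eta t}$, which gives both the $O(\eps)$ contraction factor in the fast component and the bound $\|h(\cdot,\eps)\|_{L^\infty(E,F)}\le \tfrac{C\eps}{\mu-\eps\eta}\|\mathcal{G}_1\|_\infty=O(\eps)$; the time-translation argument then yields invariance (for forward times one concatenates the backward Lyapunov--Perron trajectory with the forward mild solution and invokes uniqueness of the fixed point). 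The one place where your sketch is thinner than the statement is the claim $h\in C^1\bigl(E\times[0,\eps_0];F\bigr)$ \emph{jointly} in $(X,\eps)$ down to $\eps=0$: the fiber-contraction theorem gives $C^1$ dependence on $X$ for each fixed $\eps>0$ with uniform bounds, but differentiability in $\eps$ at $\eps=0^+$ needs an additional argument (essentially that $\eps^{-1}h(\cdot,\eps)$ converges, which is the first term of the expansion $h=\eps h_1+O(\eps^2)$ mentioned in the paper's footnote). A minor overstatement, not a gap: analyticity of $\exp(tK/\eps)$ is not what makes the differentiated fixed-point equation work --- it has the same mild convolution structure as the original, so only the decay estimate is used there.
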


\noindent This Theorem provides the existence of a manifold  $\mathcal{M}^\eps$ which is invariant for the system  $S_\eps^{sf}$ 
and parametrized by the slow variable $X^\eps\in E$.  
In our application,  $E$ is finite dimensional so that the system {\it on} $\mathcal{M}_\eps$ is a finite dimensional system.  
After showing that the solutions are close to the central manifold, up to an exponentially small error term,
 we can reduce the study to a system {\it on} the invariant manifold $\mathcal{M}^\eps$. 
 This finite dimensional system  approach, in a sense that we specify below, the original problem. 
 
\noindent More precisly, let us define the following reduced system. We do not precise the initial data at this step.

$$\left(S_\eps^{[\infty]}\right) \quad \frac{d}{dt} X^{\eps,[\infty]}(t)=\mathcal{F}_0(X^{\eps,[\infty]}(t),h(X^{\eps,[\infty]}(t),\eps)),
\quad Y^{\eps,[\infty]}(t)=h(X^{\eps,[\infty]}(t),\eps)$$

When the original data lies on this manifold, $S_\eps^{[\infty]}$ describes  the exact dynamics of $S_\eps^{sf}$. 
In general $Y^\eps(0)\neq h(X^\eps(0),\eps)$ and the real solutions do not belong to $\mathcal{M}^\eps$. 
However, the next theorem state that, up to slightly modify the initial datum, the solution of $S_\eps^{sf}$ 
are exponentially close to the solution of $S_\eps^{[\infty]}$.\\

The exact calculation of the cental manifold is usually out of reach. 
A practical idea is to make approximate calculations. 
Theorem \ref{centralmanifold} ensures that $ h (X, \eps) = O (\eps) $. 
So, as a first approximation\footnote{
Indeed, $ h (X, \eps) $ admits an asymptotic expansion of the form $ h (X, \eps) = \sum_{k=1}^r \eps^kh_k (X) + O (\eps^{r+1}) $ 
which is explicitly calculable provided the functions $ \mathcal{F}_0 $ and $ \mathcal{G}_0 $ have $C^{r+1}$ smoothness.
The approximate $ h (X, \eps) \approx \sum_{k=1}^r \eps^k h_k (X) $ leads to the writing of reduced systems of order $ r $ (see \cite{Castella2009}). This paper focus only on  the case $ r =  0$.
}, $ h(X,\eps) \approx 0$ and we obtain the following reduced system

$$\left(S_\eps^{[0]}\right) \quad \frac{d}{dt} X^{\eps,[0]}(t)=\mathcal{F}_0(X^{\eps,[0]}(t),0),\quad Y^{\eps,[0]}(t)=h(X^{\eps,[0]}(t),\eps).$$

\noindent In addition to the exponentially small error term between the solutions of $S_\eps^{sf}$ and the central manifold $\mathcal{M}^\eps$, 
the following Theorem describes the error (more precisly a shadowing principle) between the reduced systems  
$S_\eps^{[\infty]}$ and  $S_\eps^{[0]}$ and the original system  $S_\eps^{sf}$.

\begin{theorem}[error bounds between the reduced systems  and the original system]\label{errorbound}
Under the assumptions and the notations of the Theorem \ref{centralmanifold}, for any exponant  $0<\mu'<\mu$ and any initial data $(X_0,Y_0)\in E\times F$, the following assertions hold true.
\begin{enumerate}[(i)]
\item {\bf Exponential convergence towards the central manifold.}\\
There exists a constant $C>0$ such that 
$$\forall t\geq 0,\quad \|Y^\eps(t)-h(X^\eps(t),\eps)\|_F \leq C exp\left(-\mu' \frac{t}{\eps}\right).$$

\item {\bf Shadowing principle for $\left(S_\eps^{[\infty]}\right)$}.\\
For any $T>0$, there exist an initial data  $X_0^\eps$, depending on $T$ and $\eps$-close to $X_0$ and a constant $C_T>0$, such that the solution of the reduced system $S_\eps^{[\infty]}$, with initial data $X^{\eps,[\infty]}(0)=X_0^\eps$ and $Y^{\eps,[\infty]}(0)=h(X_0^\eps,\eps)$, satisfies the following error estimate 
$$\forall t\in[0,T],\quad \|X^\eps(t)-X^{\eps,[\infty]}(t)\|_E+\|Y^\eps(t)-Y^{\eps,[\infty]}(t)\|_F\leq  C_Texp\left(-\mu'\frac{t}{\eps}\right),$$ 
where $C_T>0$ is independent of $t\geq 0$ and $\eps$. If moreover there exists  $M>0$ independent of $t$ and $\eps$ such that, for all $t>0$, $\|X^\eps(t)\|_E\leq M$,  then we can take $T=+\infty$.

\item {\bf Shadowing principle for $\left(S_\eps^{[0]}\right)$}.\\
For any $T>0$, there exist an initial data  $X_0^\eps$, depending on $T$ and $\eps$-close to $X_0$ and a constant $C_T>0$, such that the solution of the reduced system $S_\eps^{[0]}$, with $X^{\eps,[0]}(0)=X_0^\eps$, satisfies the following error
estimate 
$$\forall t\in[0,T],\quad \|X^\eps(t)-X^{\eps,[0]}(t)\|_E+\|Y^\eps(t)-Y^{\eps,[0]}(t)\|_F\leq C_T\left(\eps+ exp\left(-\mu'\frac{t}{\eps}\right)\right),$$
where $C_T>0$ is independent of $t\geq 0$ and $\eps$. If moreover there exists  $M>0$ independent of $t$ and $\eps$ such that, for all $t>0$, $\|X^\eps(t)\|_E\leq M$,  then we can take $T=+\infty$.
\end{enumerate}
\end{theorem}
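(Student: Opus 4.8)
The plan is to prove the three assertions in the order stated, each building on the previous one, with the whole argument resting on the spectral gap estimate for $\exp(tK/\eps)$, the uniform Lipschitz bounds on $\mathcal{F}_0$ and $\mathcal{G}_1$, and the invariance identity satisfied by $h$. Throughout I would write $Z^\eps(t)=Y^\eps(t)-h(X^\eps(t),\eps)$ for the distance to the manifold and $G_\eps(X)=\mathcal{F}_0(X,h(X,\eps))$ for the reduced vector field on $\mathcal{M}^\eps$, and recall that invariance of $\mathcal{M}^\eps$ means $D_Xh(X,\eps)\,G_\eps(X)=\mathcal{G}_1(X,h(X,\eps))+\tfrac1\eps K h(X,\eps)$.

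For (i), I would differentiate $Z^\eps$ along the flow and use the invariance identity to eliminate the singular term $\tfrac1\eps K h$, obtaining $\tfrac{d}{dt}Z^\eps=\tfrac1\eps K Z^\eps+R^\eps(t)$, where $R^\eps$ collects the differences $\mathcal{G}_1(X^\eps,Y^\eps)-\mathcal{G}_1(X^\eps,h(X^\eps,\eps))$ and $D_Xh\,[\mathcal{F}_0(X^\eps,Y^\eps)-\mathcal{F}_0(X^\eps,h(X^\eps,\eps))]$. Since $\mathcal{F}_0,\mathcal{G}_1$ are globally Lipschitz and $h$ is Lipschitz in $X$ with bound independent of $\eps$, one has $\|R^\eps(t)\|_F\le\gamma\|Z^\eps(t)\|_F$. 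Applying Duhamel's formula with $\exp(tK/\eps)$, then the gap estimate and Gronwall's lemma to $\psi(t)=e^{\mu t/\eps}\|Z^\eps(t)\|_F$, yields $\|Z^\eps(t)\|_F\le C\|Z^\eps(0)\|_F\,e^{-(\mu-\gamma\eps)t/\eps}$; choosing $\eps_0$ so small that $\mu-\gamma\eps\ge\mu'$ on $(0,\eps_0)$ gives the claim, with $\|Z^\eps(0)\|_F$ controlled by the data.

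The heart of the proof — and the step I expect to be the main obstacle — is the shadowing assertion (ii), which is an asymptotic-phase construction. I would seek the shadowing trajectory in the form $X^{\eps,[\infty]}=X^\eps-W$, so that $W$ must solve $\dot W=\mathcal{F}_0(X^\eps,Y^\eps)-G_\eps(X^\eps-W)$. Splitting the right-hand side as $E_0^\eps(t)+[G_\eps(X^\eps)-G_\eps(X^\eps-W)]$, where the genuine forcing $E_0^\eps(t)=\mathcal{F}_0(X^\eps,Y^\eps)-\mathcal{F}_0(X^\eps,h(X^\eps,\eps))$ is $O(e^{-\mu' t/\eps})$ by (i) and the bracket is Lipschitz in $W$, I would solve the integral equation $W(t)=-\int_t^{T}\!\big[E_0^\eps+G_\eps(X^\eps)-G_\eps(X^\eps-W)\big](s)\,ds$ (with $T=+\infty$ when $X^\eps$ is bounded) by a contraction mapping in the weighted space $\{W:\ \sup_t e^{\mu' t/\eps}\|W(t)\|_E<\infty\}$. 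The crucial point is that integrating the exponentially decaying forcing and the Lipschitz term against the weight produces a factor $\eps/\mu'$, so for $\eps$ small the map both preserves the ball of radius $O(\eps)$ and is a contraction; the fixed point then gives a genuine solution of $S_\eps^{[\infty]}$ with $\|W(t)\|_E\le C\,e^{-\mu' t/\eps}$ and initial displacement $W(0)=O(\eps)$, so that $X_0^\eps$ is $\eps$-close to $X_0$. The $Y$-estimate follows from $\|Y^\eps-h(X^{\eps,[\infty]})\|_F\le\|Z^\eps\|_F+\mathrm{Lip}(h)\|W\|_E$. The delicate point to watch is that the coefficient in the Lipschitz bracket depends on $W$ through $X^{\eps,[\infty]}$, so the argument must be run as a genuine fixed point rather than a linear variation-of-constants solve, and the boundedness hypothesis on $X^\eps$ is precisely what licenses taking $T=+\infty$.

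Finally, (iii) would follow from (ii) by a regular-perturbation comparison between the two reduced flows. Since $\|h(\cdot,\eps)\|_\infty=O(\eps)$, the vector fields $G_\eps(X)=\mathcal{F}_0(X,h(X,\eps))$ and $\mathcal{F}_0(X,0)$ differ by $O(\eps)$, so a Gronwall estimate gives $\|X^{\eps,[\infty]}(t)-X^{\eps,[0]}(t)\|_E\le C_T\eps$ on $[0,T]$ for the solutions issued from the common datum $X_0^\eps$. Combining this with the exponential bound from (ii) via the triangle inequality yields $\|X^\eps-X^{\eps,[0]}\|_E\le C_T(\eps+e^{-\mu' t/\eps})$, and then $\|Y^\eps-Y^{\eps,[0]}\|_F\le\|Z^\eps\|_F+\mathrm{Lip}(h)\|X^\eps-X^{\eps,[0]}\|_E$ closes the estimate; the uniform bound $M$ again permits $T=+\infty$.
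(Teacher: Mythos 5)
The paper does not actually prove Theorem \ref{errorbound}: it defers entirely to \cite{Castella2009}, and the only in-text hint is Remark \ref{remarkx0}, which describes constructing the modified datum $X_0^\eps$ by anchoring the reduced trajectory at time $T$ to the exact one and passing to the limit $T\to+\infty$ by Ascoli. Your parts (i) and (ii) are sound and follow exactly this standard route: the invariance identity for $h$ turns $Z^\eps=Y^\eps-h(X^\eps,\eps)$ into a Duhamel/Gronwall problem with decay rate $\mu-O(\eps)\ge\mu'$, and your backward-in-time fixed point for $W=X^\eps-X^{\eps,[\infty]}$ in the weighted space $\sup_t e^{\mu' t/\eps}\|W(t)\|_E$ is precisely the asymptotic-phase argument; the factor $\eps/\mu'$ produced by integrating against the weight makes the map a contraction, gives $W(0)=O(\eps)$, and works verbatim on $[0,+\infty)$ when $X^\eps$ is bounded. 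Two small points you should make explicit: you need $\|D_X h(\cdot,\eps)\|$ bounded uniformly in $\eps$, which is part of the Lyapunov--Perron construction but not literally stated in Theorem \ref{centralmanifold}, and the constant $C$ in (i) is allowed to depend on the initial data, which is how you absorb $\|Z^\eps(0)\|_F$.

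The genuine gap is the case $T=+\infty$ in (iii). Your comparison of $X^{\eps,[\infty]}$ and $X^{\eps,[0]}$, issued from the same datum under vector fields differing by $O(\|h(\cdot,\eps)\|_\infty)=O(\eps)$, is a forward Gronwall estimate whose constant is of order $e^{LT}$; it gives $C_T\eps$ on $[0,T]$ but does not become uniform in $t$ merely because $\|X^\eps(t)\|_E\le M$. Bounded trajectories of two $O(\eps)$-close vector fields can drift $O(1)$ apart on time scales of order $L^{-1}\log(1/\eps)$ (think of phase drift along a periodic orbit), so the sentence ``the uniform bound $M$ again permits $T=+\infty$'' is an assertion, not a proof. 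Closing it requires either re-running the backward-anchored construction of Remark \ref{remarkx0} directly for $S_\eps^{[0]}$ together with some contraction or hyperbolicity of the limiting flow, or importing the argument of \cite{Castella2009}. It is worth noting that the paper only ever exploits (iii) through a fixed finite $T$ followed by entry into the local basin of attraction (Lemma \ref{bassin} and the proof of Proposition \ref{main2general}), so the finite-$T$ statement you do prove is what actually carries the applications.
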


\noindent This Theorem means that, up to slightly modify the initial datum, the original system is well 
described by the reduced systems when $\eps$ is small enough. This allows us to study the qualitative behavior of 
solutions of the original system by working on finite dimensional systems. 
\begin{remark}\label{remarkx0}
The initial data $X_0^\eps$ is constructed as follows.\\
First for a fixed $T>0$, one chooses $X_0^\eps(T)=X_T^{\eps,[0]}(0)$ as the only initial conditions such that
the solution of $\frac{d}{dt}X_T^{\eps,[0]}(t)=\mathcal{F}_0(X_T^{\eps,[0]}(t),0)$ verifies $X_T^{\eps,[0]}(T)=X^\eps(T)$.\\
Now if  $X^\eps$ is uniformly bounded in $E$, independently of $t$ and $\eps$, 
then $X_T^{\eps,[0]}$ and $\frac{d}{dt} X_T^{\eps,[0]}$ are bounded as well. By the Ascoli Theorem, 
one can choose a sequence of trajectories $X_T^{\eps,[0]}$ which converges as $T\to+\infty$.
This allows us to define $X_0^\eps=lim_{T\to\infty} X_T^{\eps,[\infty]}(0)$.\\
As a consequence, if $S_\eps^{sf}$ conserves the line $X_i=0$, then for any initial  data satisfying $X_i^\eps(0)\geq 0$ 
one see that  $X_i^{\eps,[0]}(T):=X_i^\eps(T)\geq 0$ for any fixed $T>0$. If in addition, $S_\eps^{[0]}$ conserves the line $X_i=0$,
 this implies that the $i$th componant $X_{0,i}^\eps(T):=X_{T,i}^{\eps,[0]}(0)$ is nonnegative.
This fact remains obviously true by passing to the limit $T\to +\infty$. In conclusion, if $X_i^\eps(0)\geq 0$ then one has $X_{0,i}^\eps\geq 0$. 
This fact is essential in order to deal with global dynamics in the positive cone. 
\end{remark}
\subsection{General consequences}\label{generals}
The aim of this section is to prove the two below stated general results on slow-fast system: propositions \ref{main1general} and \ref{main2general}. 
These  propositions  are the key in the proofs of our main results, theorems \ref{mains1} and \ref{mains2}.
In order to prove these two propositions, we start by the three following lemmas.
%

\noindent The  first lemma uses the invariance of the central manifold and is already noted in \cite{Ca}.
\begin{lemma}\label{Csteadystate}
Each stationary solution of $S_\eps^{sf}$ lies on $\mathcal{M}^\eps$.
\end{lemma}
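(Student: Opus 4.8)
The plan is to show that a stationary solution of $S_\eps^{sf}$ necessarily lies on the graph $\mathcal{M}^\eps = \{(X,h(X,\eps))\}$. Let $(X^*, Y^*) \in E \times F$ be a stationary point, so that $\mathcal{F}_0(X^*, Y^*) = 0$ and $\mathcal{G}_1(X^*, Y^*) + \frac{1}{\eps} K Y^* = 0$. The starting observation is that a stationary point generates a constant (hence globally defined and bounded) trajectory of $S_\eps^{sf}$: the solution with initial data $(X^*, Y^*)$ is simply $(X^\eps(t), Y^\eps(t)) \equiv (X^*, Y^*)$ for all $t \geq 0$. I would then invoke the exponential convergence estimate from Theorem \ref{errorbound}(i), which applies to \emph{any} trajectory of $S_\eps^{sf}$ and in particular to this constant one.

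Applying Theorem \ref{errorbound}(i) to the constant trajectory, for any $0 < \mu' < \mu$ there is a constant $C > 0$ with
$$\|Y^\eps(t) - h(X^\eps(t), \eps)\|_F \leq C \exp\left(-\mu' \frac{t}{\eps}\right), \quad \forall t \geq 0.$$
Since $X^\eps(t) = X^*$ and $Y^\eps(t) = Y^*$ are constant in $t$, the left-hand side equals the fixed quantity $\|Y^* - h(X^*, \eps)\|_F$, which does not depend on $t$. Letting $t \to +\infty$, the right-hand side tends to $0$, forcing
$$\|Y^* - h(X^*, \eps)\|_F = 0,$$
that is, $Y^* = h(X^*, \eps)$. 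Hence $(X^*, Y^*) = (X^*, h(X^*, \eps)) \in \mathcal{M}^\eps$, which is exactly the claim.

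The main (and essentially only) obstacle is a bookkeeping one: Theorem \ref{errorbound}(i) as stated bounds the distance of a trajectory to the manifold along the \emph{same} trajectory $X^\eps(t)$, so one must be careful that the estimate is applied to the genuine flow starting at the stationary datum, not to a shadowing trajectory with a modified initial condition as in parts (ii)--(iii). Here this is immediate because the flow through a stationary point is literally constant, so no initial-data adjustment is needed and the time-independence of the left-hand side is what makes the limit argument work. One could alternatively argue purely algebraically, but the dynamical argument via Theorem \ref{errorbound}(i) is the cleanest and matches the remark that this lemma is already noted in \cite{Ca}.
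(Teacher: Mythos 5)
Your argument is correct and is essentially the paper's own proof: both apply the exponential convergence estimate of Theorem \ref{errorbound}(i) to the constant trajectory through the stationary point and let $t\to+\infty$ to force $Y^*=h(X^*,\eps)$. Your remark that no shadowing/modified initial data is needed because the trajectory is literally constant is exactly the point that makes the paper's one-line argument work.
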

\begin{proof}
Let $P^{\eps}=(X^{\eps},Y^{\eps})\in E\times F$ be a  stationary solution of $S_\eps^{sf}$. 
The invariance of the central manifold implies that $(X^{\eps},h(X^{\eps},\eps))$ is a stationary solution of $S_\eps^{sf}$. 
By the theorem \ref{errorbound}, it comes 
$$\|Y^{\eps}-h(X^{\eps},\eps)\|_F\leq Cexp(-\mu \frac{t}{\eps})$$
and so, by passing to the limit $t\to+\infty$,  
$$Y^{\eps}=h(X^{\eps},\eps).$$
\end{proof}

\noindent Hence, the complete description of the stationary solutions of the {\it finite dimensional system}
$$S_\eps^c\;:\; \frac{d}{dt} X^{\eps,[\infty]}(t)=\mathcal{F}_0(X^{\eps,[\infty]}(t),h(X^{\eps,[\infty]}(t),\eps))$$
provides a  complet  description of
the stationary solutions of the slow-fast system $S_\eps^{sf}$. 

\paragraph{}
Despite the fact that the system  $S_\eps^{c}$ is finite dimensional, 
it is not explicit and difficult to study directly. 
But it can generically be seen as a regular perturbation of  $S_0^{c}$ 
and stationary solutions can then be easily reconstructed by local inversion.
\begin{lemma}\label{stationaryS0}
Assume that $p^0$ is a stationary asymptotically linearly stable (unstable) solution of $S_0^{c}$. 
Then there exists $\eps_1>0$ such that for all $\eps\in[0,\eps_1]$, there exists a  stationary point $p^\eps\in E$ of $S_\eps^{c}$ 
which is asymptotically linearly stable (resp. unstable) and $\eps \mapsto p^\eps$ is a $C^1$ function from $[0,\eps_1]$ to $E$.
Moreover, $p^\eps$ is the only stationary solutions of $S_\eps^{c}$ in a neighborhood of $p^0$.\\

\end{lemma}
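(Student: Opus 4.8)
The plan is to characterize the stationary solutions of $S_\eps^c$ as the zeros of a single $C^1$ map and then invoke the implicit function theorem, using the hyperbolicity of $p^0$ to guarantee that the relevant linearization is invertible. First I would set $G(X,\eps):=\mathcal{F}_0(X,h(X,\eps))$, so that $X$ is a stationary point of $S_\eps^c$ if and only if $G(X,\eps)=0$. By Theorem \ref{centralmanifold} one has $h\in C^1(E\times[0,\eps_0];F)$, and $\mathcal{F}_0\in C^1$ by hypothesis, hence $G\in C^1(E\times[0,\eps_0];E)$. The estimate $\|h(\cdot,\eps)\|_{L^\infty(E,F)}=O(\eps)$ forces $h(\cdot,0)\equiv 0$ on $E$; therefore $G(\cdot,0)=\mathcal{F}_0(\cdot,0)$, and $p^0$, being a stationary point of $S_0^c$, satisfies $G(p^0,0)=0$.

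Next I would compute the partial differential at the base point. By the chain rule, $D_X G(p^0,0)=D_X\mathcal{F}_0(p^0,0)+D_Y\mathcal{F}_0(p^0,0)\,D_X h(p^0,0)$. Since $h(\cdot,0)\equiv 0$ \emph{identically} in $X$, its $X$-derivative vanishes, $D_X h(\cdot,0)=0$, and the cross term disappears, leaving $D_X G(p^0,0)=D_X\mathcal{F}_0(p^0,0)$. Because $p^0$ is asymptotically stable (resp. unstable) it is hyperbolic, i.e. $0$ is not an eigenvalue of $D_X\mathcal{F}_0(p^0,0)$; as $E=\R^{N+1}$ is finite dimensional, this operator is invertible. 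The implicit function theorem then yields $\eps_1>0$ and a $C^1$ branch $\eps\mapsto p^\eps$ on $[0,\eps_1]$ with $p^0$ at $\eps=0$, $G(p^\eps,\eps)=0$, and $p^\eps$ the unique zero of $G(\cdot,\eps)$ in a fixed neighborhood of $p^0$ — which is exactly the asserted local uniqueness of stationary points of $S_\eps^c$.

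The stability statement then follows from continuity of the spectrum. The linearization of $S_\eps^c$ at $p^\eps$ is precisely $D_X G(p^\eps,\eps)$, which depends continuously on $\eps$ and equals $D_X\mathcal{F}_0(p^0,0)$ at $\eps=0$. If $p^0$ is stable, all eigenvalues of $D_X\mathcal{F}_0(p^0,0)$ have negative real part; since the eigenvalues of a finite matrix depend continuously on its entries, for $\eps$ small enough the eigenvalues of $D_X G(p^\eps,\eps)$ still have negative real part, so $p^\eps$ is stable. If $p^0$ is unstable, at least one eigenvalue has positive real part, and by the same continuity this persists for small $\eps$, so $p^\eps$ is unstable; one shrinks $\eps_1$ if necessary.

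The delicate point — and the step I expect to be the crux — is the regularity of $h$ up to $\eps=0$ together with the identification $D_X h(\cdot,0)=0$, which decouples the linearization so that it coincides with that of $S_0^c$. This is where the $O(\eps)$ bound of Theorem \ref{centralmanifold} is essential: without it the Jacobian of $G$ at $(p^0,0)$ would retain the term $D_Y\mathcal{F}_0(p^0,0)\,D_X h(p^0,0)$ and its invertibility could not be read off directly from the hyperbolicity of $p^0$. One must also use a one-sided (boundary) version of the implicit function theorem at $\eps=0$, which is legitimate precisely because $h$, hence $G$, is $C^1$ on the closed half-neighborhood $E\times[0,\eps_0]$.
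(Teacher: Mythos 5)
Your proof is correct and follows essentially the same route as the paper: the paper's own argument is precisely an application of the implicit function theorem to $(X,\eps)\mapsto \mathcal{F}_0(X,h(X,\eps))$ for existence, uniqueness and $C^1$-dependence, followed by a finite-dimensional spectral perturbation argument for the stability/instability of $p^\eps$. Your additional observations — that $h(\cdot,0)\equiv 0$ forces $D_Xh(\cdot,0)=0$ so the Jacobian at $(p^0,0)$ reduces to $D_X\mathcal{F}_0(p^0,0)$, invertible by hyperbolicity — are exactly the details the paper leaves implicit.
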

\begin{proof}
A simple application of the implicit function theorem on the function $(X,\eps)\mapsto \mathcal{F}_0(X,h(X,\eps))$ 
shows both the existence of the $C^1$ map $\eps\mapsto p^\eps$ and the uniqueness. The systems $S_\eps^{c}$ being finite dimensional, 
a simple perturbation argument shows that $p^\eps$ is asymptotically linearly stable (unstable).
\end{proof}\\

Thanks to the regularity of $\mathcal{F}_0$, linear asymptotic stability implies asymptotic stability. 
Hence, if $p^0$ is linearly asymptotically stable, then $p^\eps$ is asymptotically stable. 
In fact, a stronger result holds : the size of the basin of attraction can be chosen independently on $\eps$.
 This is used strongly in the sequel to deduce both local and global stability properties of the stationary solutions of $S_\eps$ 
 from the corresponding results for $S_0^{c}$.\\
\begin{lemma}\label{bassin}

Define $S^\eps(t)$ the one-parameter group associated to $S_\eps^{c}$. That is
$$S^\eps(t)X_0=X^\eps(t)$$
where $X^\eps(t)$ is the only solution of $S_\eps^{c}$ with initial data $X_0$.\\
If $p^0$ is linearly asymptotically stable for $S_0^c$, then
\begin{equation*}
\exists \eps_0>0,\;\exists r>0,\; \forall \eps\in[0,\eps_0],\; \forall w_0\in B(p^\eps,r),\; \lim_{t\to+\infty}\|S^\eps(t)w_0-p^\eps\|=0
\end{equation*}
\end{lemma}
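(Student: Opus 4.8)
The plan is to combine the local asymptotic stability obtained in Lemma \ref{stationaryS0} with a uniform-in-$\eps$ Lyapunov argument. The key observation is that $S_\eps^c$ is a regular perturbation of $S_0^c$ on the finite dimensional space $E$, so the linearizations $D_X\mathcal{F}_0(p^\eps,h(p^\eps,\eps))$ depend continuously on $\eps$ and, at $\eps=0$, coincide with $D_X\mathcal{F}_0(p^0,0)$, whose spectrum lies in $\{\mathrm{Re}\,\lambda<-\delta\}$ for some $\delta>0$ by linear asymptotic stability. By continuity of the spectrum under the $C^1$ perturbation $\eps\mapsto(p^\eps,h(p^\eps,\eps))$, there exist $\eps_0>0$ and $\delta'>0$ such that for all $\eps\in[0,\eps_0]$ the spectrum of the linearization at $p^\eps$ stays in $\{\mathrm{Re}\,\lambda<-\delta'\}$.

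First I would build a single quadratic Lyapunov function that works uniformly in $\eps$. For the limit operator $L_0:=D_X\mathcal{F}_0(p^0,0)$ one solves the Lyapunov equation $L_0^TP+PL_0=-I$ for a positive definite $P$; since $E$ is finite dimensional this is standard. I would then set $V(w)=\langle P(w-p^\eps),(w-p^\eps)\rangle$ and compute $\frac{d}{dt}V$ along trajectories of $S_\eps^c$. Writing $\mathcal{F}_0(w,h(w,\eps))=L_\eps(w-p^\eps)+N_\eps(w-p^\eps)$ with $L_\eps$ the linearization at $p^\eps$ and $N_\eps$ the higher-order remainder, the derivative splits into a dominant term controlled by the Lyapunov equation plus two perturbative contributions: one coming from $\|L_\eps-L_0\|=O(\eps)$, the other from the nonlinearity $N_\eps$ which is $o(\|w-p^\eps\|)$ uniformly in $\eps$ on a fixed neighborhood (using the uniform bounds on $\mathcal{F}_0$ and its derivatives, together with $\|h(\cdot,\eps)\|=O(\eps)$). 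Choosing $\eps_0$ small enough and a radius $r>0$ small enough, both perturbations are absorbed, yielding $\frac{d}{dt}V\leq -c\,V$ for some $c>0$ uniform in $\eps\in[0,\eps_0]$ on the ball $B(p^\eps,r)$.

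From this differential inequality the ball $B(p^\eps,r)$ (or a slightly smaller sublevel set of $V$) is forward invariant, and Gronwall gives $V(S^\eps(t)w_0)\leq V(w_0)e^{-ct}\to 0$, hence $\|S^\eps(t)w_0-p^\eps\|\to 0$ for every $w_0\in B(p^\eps,r)$; since $r$ and $\eps_0$ were chosen independently of $\eps$, this is exactly the claimed statement. I expect the main obstacle to be verifying that the radius $r$ can be fixed \emph{uniformly} in $\eps$: this requires that the nonlinear remainder estimate $\|N_\eps(z)\|\leq \omega(\|z\|)\|z\|$ hold with a modulus $\omega$ independent of $\eps$, which in turn relies on the uniform $C^1$ bounds on $\mathcal{F}_0$ and $h(\cdot,\eps)$ from Theorem \ref{centralmanifold} rather than on $\eps$-dependent Taylor expansions. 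Once this uniformity is secured the rest is routine, so the care in the writeup should be concentrated there.
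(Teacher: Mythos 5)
Your proposal is correct, but it proves the lemma by a genuinely different route than the paper. The paper argues by contradiction: it sets $r_\eps$ to be the supremal radius of attraction of $p^\eps$, assumes $\liminf_{\eps\to0}r_\eps=0$, extracts sequences $\eps_n\to0$ and initial points $w_n$ with $r_{\eps_n}\leq\|w_n-p^{\eps_n}\|_E\leq 2r_{\eps_n}$ whose orbits never re-enter $B(p^{\eps_n},r_{\eps_n})$, compares $S^{\eps_n}(t)$ with the unperturbed flow $S^0(t)$ on finite time intervals via Gronwall, and then derives a contradiction from the decay of the linearized semigroup $e^{tA}$, $A=D_X\mathcal{F}_0(p^0,0)$, after dividing by $\|w_n-p^{\eps_n}\|_E$ and letting $n\to\infty$. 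You instead build a single quadratic Lyapunov function from the Lyapunov equation for $L_0=D_X\mathcal{F}_0(p^0,0)$ and show that its decay survives both the $O(\eps)$ drift of the linearization and the superlinear remainder on a fixed ball. Your route is more constructive and quantitative: it yields a uniform exponential rate and an explicitly forward-invariant neighborhood, whereas the paper's compactness argument only yields the qualitative conclusion. The price is that you must control the full linearization of $w\mapsto\mathcal{F}_0(w,h(w,\eps))$, hence $D_Xh$ near $p^\eps$, uniformly in $\eps$ --- the very point you flag; this is available because Theorem \ref{centralmanifold} gives $h\in C^1(E\times[0,\eps_0];F)$ with $h(\cdot,0)\equiv0$, so $D_Xh(p^\eps,\eps)\to0$, and $D\mathcal{F}_0$ is (after truncation) uniformly locally Lipschitz. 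The paper's proof, by contrast, only needs a $C^0$ finite-time comparison of the two flows, at the cost of a less transparent limiting argument. Both are valid in the finite-dimensional setting $E=\R^{N+1}$ where the lemma is applied.
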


\begin{proof}

Since the linear stability implies the (local) stability, the lemma \ref{stationaryS0} yields that for all $\eps\in(0,\eps_1)$ there exists $r>0$ such that 
$$X_0\in B(p^\eps,r)\Rightarrow \lim_{t\to +\infty}\|S(t)X_0-p^\eps\|_E=0.$$
So one can define 
\begin{equation}\label{reepsdef}
r_\eps=sup\{r>0, \forall w\in B(p^\eps,r),\;\text{s.t. } \lim_{t\to+\infty} \|S^\eps(t)w-p^\eps\|_E=0\}.
\end{equation}
The lemma holds true if $\liminf_{\eps\to 0} r_\eps >0$. Let us argue by contradiction.\\

\noindent Suppose that $\liminf_{\eps\to 0} r_\eps =0$, then there exists three 
sequences 
$\eps_n\to 0$, $r_{\eps_n}\to 0$ and $w_n\in E$ 
verifying 
\begin{equation}\label{estih}
r_{\eps_n}\leq \|w_n-p^{\eps_n}\|_E\leq 2r_{\eps_n},
\end{equation}
such that
\begin{equation}\label{contradiction}
\limsup_{t\to+\infty} \|S^{\eps_n}(t)w_n-p^{\eps_n}\|_E>0.
\end{equation}

\noindent We claim that 

\begin{equation}\label{claim1}
\forall t\geq 0,\quad \|S^{\eps_n}(t)w_n-p^{\eps_n}\|_E\geq r_{\eps_n}.
\end{equation}

\noindent Indeed, arguing by contradiction, assume that there exists $t_0\geq 0$ such that

$$\|S^{\eps_n}(t_0)w_n-p^{\eps_n}\|_E<r_{\eps_n}$$

\noindent Therefore one gets for each $t>t_0$,
$$\|S^{\eps_n}(t)w_n-p^{\eps_n}\|_E=\|S^{\eps_n}(t-t_0)S^{\eps_n}(t_0)w_n-p^{\eps_n}\|_E$$

\noindent So that, by \eqref{reepsdef},
$$\lim_{t\to +\infty}\|S^{\eps_n}(t)w_n-p^{\eps_n}\|_E=0,$$
which contradicts \eqref{contradiction}. It follows that \eqref{claim1} holds.

\noindent Now, denote $h_n=w_n-p^{\eps_n}$ and remark that $S^{\eps}(t) p^\eps=p^\eps$. One gets for all $t\geq 0$,
\begin{equation}\label{eq313}
r_{\eps_n}\leq \|S^{\eps_n}(t)h_n\|_E\leq \|S^{\eps_n}(t){h_n}-S^{0}(t)h_n\|_E+\|S^0(t) h_n\|_E
\end{equation}

Take any $T>0$,  the Gronwall Lemma together with global Lipschitz property of $\mathcal{F}_0$ and $h$ yields for all $t\in[0,T]$
\begin{equation}\label{eq1}
\|S^\eps(t)X_0- S^0(t)X_0\|_E\leq \eps C_T \|X_0\|_E
\end{equation}
for some positive constant $C_T$ independent  on $t$ and $X_0$. Therefore
$$\|S^{\eps_n}(t)h_n\|_E\leq \eps_nC_T \|h_n\|_E+\|S^0(t) h_n\|_E.$$ 
Divide \eqref{eq313} by $\|h_n\|_E$, using \eqref{estih} and passing, up to a subsequence, to the limit $n\to +\infty$, one obtains
\begin{equation}\label{contradict}
\forall t\in(0,T),\quad\frac{1}{2}\leq \lim_{n\to+\infty}\frac{1}{\|h_n\|_E}\|S^0(t) h_n\|_E \leq\||e^{t A}\||\end{equation}
where $A=D_X\mathcal{F}_0(p^0,0)$.

The asymptotic linear stability of $p^0$, reads $\sigma(A)\subset\{\lambda\in \C,\; \Re(\lambda)\in ]-\infty,-\beta]\}$ 
for some $\beta>0$ so that
$$\lim_{t\to +\infty}\||e^{t A}\||=0$$ 
which yields to a contradiction by taking $T$ and $t$ big enough in \eqref{contradict}.
\end{proof}

\noindent One can now state the first  proposition describing completly the stationary solutions of $S_\eps^{sf}$.
\begin{proposition}\label{main1general}Under the assumptions of theorem \ref{centralmanifold}, there exists $\eps_0>0$ 
such that for each $\eps\in(0,\eps_0)$ the following holds true.
\begin{itemize}

\item[(i)] Assume that  $S_0^{c}$ has one stationary solution $p^0$ which is  
hyperbolic.%
Then $S_\eps^{sf}$ has one stationary solution $P^\eps:=(p^\eps,h(p^{\eps},\eps))$ which is hyperbolic and verifies 
${\displaystyle \lim_{\eps\to 0} \|p^{\eps}-p^0\|_E=0.}$\\
 $P^\eps$ is called the stationary solution corresponding to $p^{0}$.
 
 \item[(ii)] Assume that $S_0^{c}$ has one linearly asymptotically  stable 
 (resp. unstable)
solution. Then the corresponding stationary solution of $S_\eps^{sf}$ is linearly asymptotically stable (resp. unstable).

\item[(iii)] If all the stationary solution of $S_0^{c}$ are hyperbolic,
 then $S_0^{c}$ has a finite number $m$ of stationary solution and
  $S_\eps^{sf}$ has exactly $m$ stationary solutions.
  
 \end{itemize}

\end{proposition}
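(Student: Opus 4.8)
The plan is to prove Proposition \ref{main1general} by transferring the properties of the stationary solutions of $S_0^c$ to $S_\eps^{sf}$, passing through the finite-dimensional system $S_\eps^c$, and then using Lemma \ref{Csteadystate} to connect $S_\eps^c$ back to $S_\eps^{sf}$. The key observation is that, thanks to Lemma \ref{Csteadystate}, every stationary solution of $S_\eps^{sf}$ has the form $(X^\eps, h(X^\eps,\eps))$ where $X^\eps$ is a stationary point of the reduced finite-dimensional system $S_\eps^c$; conversely, each stationary point of $S_\eps^c$ lifts to a stationary solution of $S_\eps^{sf}$ on the central manifold. This establishes a bijection between the equilibria of $S_\eps^c$ and those of $S_\eps^{sf}$, so it suffices to analyze the finite-dimensional system $S_\eps^c$, which by the discussion in Section \ref{generals} is a regular perturbation of $S_0^c$.

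For part (i), I would apply Lemma \ref{stationaryS0} directly. Since $p^0$ is hyperbolic, it is in particular a nondegenerate zero of $X\mapsto \mathcal{F}_0(X,0)$, so the implicit function theorem applied to $(X,\eps)\mapsto \mathcal{F}_0(X,h(X,\eps))$ produces a unique $C^1$ branch $\eps\mapsto p^\eps$ with $p^0$ as its value at $\eps=0$, and $\lim_{\eps\to 0}\|p^\eps-p^0\|_E=0$. The perturbation argument in Lemma \ref{stationaryS0} shows the linearization $D_X\mathcal{F}_0(p^\eps,h(p^\eps,\eps))$ stays close to $A=D_X\mathcal{F}_0(p^0,0)$ in the finite-dimensional space $E$, so no eigenvalue crosses the imaginary axis for small $\eps$; hence $p^\eps$ remains hyperbolic, and $P^\eps:=(p^\eps,h(p^\eps,\eps))$ is the corresponding hyperbolic stationary solution of $S_\eps^{sf}$. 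Part (ii) follows from the same perturbation argument, since continuity of the spectrum in finite dimension preserves the sign of the real parts of eigenvalues: if $A$ has all eigenvalues with negative real part (resp. at least one with positive real part), the same holds for the perturbed linearization for $\eps$ small.

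For part (iii), the plan is to combine a compactness argument with the local uniqueness from Lemma \ref{stationaryS0}. If every stationary solution of $S_0^c$ is hyperbolic, then each is isolated, and since the relevant region is the bounded positive region dictated by the dynamics of the chemostat, there are only finitely many, say $m$ of them, $p^0_1,\dots,p^0_m$. Around each $p^0_j$, Lemma \ref{stationaryS0} gives a neighborhood containing exactly one equilibrium $p^\eps_j$ of $S_\eps^c$ for small $\eps$, so $S_\eps^c$ has at least $m$ equilibria. The main obstacle—and the part requiring genuine care—is to rule out the emergence of extra equilibria of $S_\eps^c$ outside these neighborhoods as $\eps\to 0$. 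The cleanest way is a contradiction argument: suppose for a sequence $\eps_n\to 0$ there is an equilibrium $q^{\eps_n}$ of $S_{\eps_n}^c$ lying outside all the fixed neighborhoods of the $p^0_j$; by the uniform a priori bounds (Theorem \ref{chap5thmaj}) the sequence $q^{\eps_n}$ is bounded in $E$, hence has a limit point $q^*$; since $h(\cdot,\eps)=O(\eps)$ and $\mathcal{F}_0$ is continuous, passing to the limit in $\mathcal{F}_0(q^{\eps_n},h(q^{\eps_n},\eps_n))=0$ gives $\mathcal{F}_0(q^*,0)=0$, so $q^*$ is an equilibrium of $S_0^c$ distinct from every $p^0_j$ (as it lies outside their neighborhoods), contradicting the completeness of the list $p^0_1,\dots,p^0_m$. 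Therefore for small $\eps$ the system $S_\eps^c$ has exactly $m$ equilibria, and via the bijection above so does $S_\eps^{sf}$, completing the proof.
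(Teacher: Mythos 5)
Your treatment of (i) is the paper's argument (Lemma \ref{stationaryS0} via the implicit function theorem, then lifting to $\mathcal{M}^\eps$), and your compactness argument for (iii) is sound and in fact more explicit than the paper, which leaves (iii) to the reader. The problem is (ii). Your finite-dimensional spectral-continuity argument only shows that $p^\eps$ is linearly asymptotically stable (resp.\ unstable) \emph{for the reduced system} $S_\eps^{c}$ on $E$. The assertion to be proved concerns $P^\eps=(p^\eps,h(p^\eps,\eps))$ as a stationary solution of the full infinite-dimensional system $S_\eps^{sf}$ on $E\times F$, and the passage from the former to the latter is precisely what the paper identifies as ``the main difficulty of this proof.'' The unstable case is indeed immediate (an unstable direction on the invariant manifold is an unstable direction for the full flow), but the stable case is not: a trajectory of $S_\eps^{sf}$ starting near $P^\eps$ does not lie on $\mathcal{M}^\eps$, and one must control both its transverse decay and its shadow on the manifold.

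The paper closes this gap by combining Theorem \ref{errorbound}(ii) with Lemma \ref{bassin}. The shadowing principle replaces the initial slow component $X_0$ by a modified datum $X_0^\eps$ with $\|X_0^\eps-X_0\|_E=O(\eps)$ and gives an exponentially small error between $Z^\eps(t)$ and the manifold trajectory $Z^{\eps,[\infty]}(t)$; one then needs $X_0^\eps$ to fall in the basin of attraction of $p^\eps$ for $S_\eps^{c}$. Since that basin could a priori shrink as $\eps\to0$, the paper first proves (Lemma \ref{bassin}) that its radius $r$ can be chosen \emph{uniformly in} $\eps$ — itself a nontrivial contradiction argument using the Gronwall estimate $\|S^\eps(t)X_0-S^0(t)X_0\|_E\leq\eps C_T\|X_0\|_E$ and the decay of $e^{tA}$ with $A=D_X\mathcal{F}_0(p^0,0)$. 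Only then does the triangle inequality $\|Z^\eps(t)-P^\eps\|\leq\|Z^\eps(t)-Z^{\eps,[\infty]}(t)\|+\|Z^{\eps,[\infty]}(t)-P^\eps\|$ yield convergence. Your proposal contains none of this; as written, (ii) is asserted rather than proved, and the missing step is the heart of the proposition.
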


\

\noindent \begin{proof}
Proof of  $(i)$. By the lemma \ref{stationaryS0}, one knows that  there exists  $p^\eps$  an hyperbolic stationary solution of $S_\eps^{c}$. 
It follows that  $P^\eps:=(p^\eps,h(p^\eps,\eps))$ is a stationary solution of 
$S_\eps^{sf}$.\\
Proof of  $(ii)$. Assume that $p^0$ is a linearly asymptotically stable (resp. unstable) stationary solution of $S_0^c$. 
By the lemma \ref{stationaryS0}, $p^\eps$ is a linearly asymptotically stable (resp. unstable) stationary solution of $S_\eps^c$. 
It remains to proof that if $p^\eps$ is stable (resp. unstable) for $S_\eps^{c}$ 
then so  is $P^\eps$ for $S_\eps^{sf}$. 
If $p^{\eps}$ is an unstable stationary solution of $S_\eps^{c}$, then $P^\eps$ is obviously an unstable stationary solution of $S_\eps^{sf}$.

\noindent Let us show that, if $p^{\eps}$ is a stable stationary solution of $S_\eps^{c}$,
 then $P^\eps$  is a stable stationary solution of $S_\eps^{sf}$. This is the main difficulties of this proof. 
 We solve this problem\footnote{Indeed, this is a general fact for central manifold  as point out by Carr \cite{Ca}.} by using  lemma \ref{bassin}. \\\
Denote   $Z^\eps(t)=(X^\eps(t),Y^\eps(t))$ the only solution of $S_\eps^{sf}$ with initial data 
$Z^\eps(0)=(X_0,Y_0)$ in a neighborhood (remaining to determine) of $P^{\eps}$ in $E\times F$ 
and $Z^{\eps,[\infty]}(t)=(X^{\eps,[\infty]}(t),h(^{\eps,[\infty]}(t),\eps))$ the only solution of $S_\eps^{[\infty]}$ 
with initial data $Z^{\eps,[\infty]}(0)=(X_0^\eps,h(X_0^\eps,\eps))$ 
given in the Theorem \ref{errorbound}- (iii). Recall that $\|X_0^\eps-X_0\|_E=O(\eps)$.
One gets 
\begin{equation*}
\begin{split}
\|Z^{\eps}(t)-P^{\eps}\|_{E\times F}&:=\|X^\eps(t)-p^\eps\|_E+\|Y^\eps(t)-h(p^\eps,\eps)\|_F\\
							&\leq \|Z^{\eps}(t)-Z^{\eps,[\infty]}(t)\|_{E\times F}
							+\|Z^{\eps,[\infty]}(t)-P^{\eps}\|_{E\times F}.
\end{split}\end{equation*}

\noindent Let $r>0$ be the size of the  basin of attraction define in the lemma \ref{bassin}. 
$r$ is independent of $\eps$. 
If $\|Z^{\eps}(0)-P^{\eps}\|_{E\times F}\leq r/3$,  then one gets
$$\|X_0^\eps-p^{\eps}\|_{E\times F}=\|X_0^\eps-X_0\|_E+\|X_0-p^\eps\|_F\leq r/2$$
for small enough $\eps$. \\
Therefore,  Lemma  \ref{bassin} yields

$$\lim_{t\to+\infty}\|X^{\eps,[\infty]}(t)-p^{\eps}\|_E=0$$
and then by continuity of $h$,
$$\lim_{t\to+\infty}\|Z^{\eps,[\infty]}(t)-P^{\eps}\|_{E\times F}=0.$$
Finally, by the Theorem \ref{errorbound}, for some positive constants  $C$ and $\mu'$, one gets
$$\|Z^{\eps}(t)-Z^{\eps,[\infty]}(t)\|_{E\times F}\leq Cexp(-\mu'\frac{t}{\eps})\to 0\text{ as } t\to +\infty,$$
which shows that
$$\|Z^{\eps}(t)-P^{\eps}\|_{E\times F}\to 0\text{ as } t\to+\infty,$$
and end the proof of the stability of $P^{\eps}$ for $S_\eps^{sf}$.
\end{proof}
\paragraph{}

The last result of this section describes the {\it asymptotic dynamics} of $S_\eps^{sf}$ when the global dynamics of $S_0^c$ is known. 

\begin{proposition}\label{main2general}
 Suppose that the assumption of the theorem \ref{centralmanifold} are verified. Set
 $\eps_0>0$ the (small) scalar such that for all $\eps\in (0,\eps_0)$ the conclusion of theorems \ref{centralmanifold} and \ref{errorbound} occur.\\
 Let $\eps\in(0,\eps_0)$ and for any initial condition $Z_0=(X_0,Y_0)\in E\times F$, define $X_0^{\eps}(Z_0)\in E$ be a modified initial data 
 appearing in  the theorem \eqref{errorbound}-(iii).\\
 Assume that there exists three set $\mathcal{Q}\in E$, $Q_E\in E$ and $Q_F\in F$  satisfying  the three following assumptions.
 \begin{itemize}
   
  \item[(i)]$S_0^c$ admits one hyperbolic stationary solution $p^0\in \mathcal{Q}$ 
  which is a global attractor in $\mathcal{Q}$ 
  for the dynamic of $S_0^c$.
  \end{itemize}
 Let $P^\eps:=(p^\eps,h(p^\eps,\eps))\in E\times F$ be the corresponding stationnary solution for $S_\eps^{sf}$.
\begin{itemize}
 \item[(ii)]   
 For any initial condition $Z_0=(X_0,Y_0)\in Q_E\times Q_F$, the modified initial data $X_0^{\eps}(Z_0)$ belongs to $\mathcal{Q}$.
 \end{itemize}
Then, for any initial condition $Z_0\in Q_E\times Q_F$, one have $\|\bW^\eps(\cdot,t)-P^\eps(\cdot)\|_{E\times F} \to 0$ as $t\to +\infty$.
 \end{proposition}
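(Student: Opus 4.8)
The plan is to follow the diagram of Section~\ref{generals}, upgrading the \emph{local} stability statement of Proposition~\ref{main1general}(ii) to a \emph{global} one by transferring the global attraction of $p^0$ along the chain $S_0^c\to S_\eps^c\to S_\eps^{[\infty]}\to S_\eps^{sf}$. Throughout $\eps\in(0,\eps_0)$ is fixed; I write $Z^\eps(t)=(X^\eps(t),Y^\eps(t))$ for the solution of $S_\eps^{sf}$ and $S^\eps(t)$, $S^0(t)$ for the flows of $S_\eps^c$ and $S_0^c$ as in Lemma~\ref{bassin}. Since $\bW^\eps=X^\eps+Y^\eps$ and $P^\eps=(p^\eps,h(p^\eps,\eps))$ under the isomorphism $E\times F\simeq\CN$, the goal is exactly $\|Z^\eps(t)-P^\eps\|_{E\times F}\to 0$. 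I first record a free observation: a hyperbolic point that is a global attractor in $\mathcal{Q}$ cannot carry an unstable direction, so $p^0$ is linearly asymptotically stable. Hence both Lemma~\ref{bassin} (giving a basin $B(p^\eps,r)$ of $p^\eps$ for $S_\eps^c$ with $r>0$ independent of $\eps$) and Proposition~\ref{main1general}(ii) (giving local asymptotic stability of $P^\eps$ for $S_\eps^{sf}$, say on $B(P^\eps,\rho)$) are available.

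\emph{Step 1: global attraction for $S_\eps^c$.} I would show that $p^\eps$ attracts all of $\mathcal{Q}$ under $S_\eps^c$. Fix $\xi_0\in\mathcal{Q}$. Because $p^0$ is a global attractor in $\mathcal{Q}$ for $S_0^c$, one has $S^0(t)\xi_0\to p^0$, so there is a time $T$ with $S^0(T)\xi_0\in B(p^0,r/4)$. The regular-perturbation estimate \eqref{eq1} gives $\|S^\eps(T)\xi_0-S^0(T)\xi_0\|_E\le \eps\,C_T\|\xi_0\|_E$, whence, using $\|p^\eps-p^0\|_E=O(\eps)$ from Lemma~\ref{stationaryS0}, the point $S^\eps(T)\xi_0$ lies in $B(p^\eps,r)$ once $\eps$ is small; Lemma~\ref{bassin} then yields $S^\eps(t)\xi_0\to p^\eps$. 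This step is the main obstacle: the statement fixes a single $\eps_0$ and only afterwards quantifies over initial data, whereas the smallness of $\eps$ needed to land $S^\eps(T)\xi_0$ in $B(p^\eps,r)$ depends on $T$, and $T$ — the time for the limiting flow to reach $B(p^0,r/4)$ — a priori depends on $\xi_0$. To obtain an $\eps_0$ independent of the datum I would need the attraction time to be bounded uniformly over $\mathcal{Q}$ (e.g.\ $\mathcal{Q}$ bounded, so that the attraction of $p^0$ is uniform on $\mathcal{Q}$ by a standard compactness argument), which is exactly the structural ingredient furnished in the applications of Section~\ref{proofs}.

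\emph{Step 2: from $S_\eps^c$ to $S_\eps^{[\infty]}$.} Given $Z_0\in Q_E\times Q_F$, assumption~(ii) places the modified datum $X_0^\eps(Z_0)$ in $\mathcal{Q}$. Hence the solution of $S_\eps^{[\infty]}$ issued from $X^{\eps,[\infty]}(0)=X_0^\eps$ is $X^{\eps,[\infty]}(t)=S^\eps(t)X_0^\eps$, which converges to $p^\eps$ by Step~1; by continuity of $h$ this forces $Z^{\eps,[\infty]}(t)=(X^{\eps,[\infty]}(t),h(X^{\eps,[\infty]}(t),\eps))\to P^\eps$. Note that here one must use $S_\eps^{[\infty]}$ rather than $S_\eps^{[0]}$: the $X$-dynamics of $S_\eps^{[0]}$ is $S_0^c$ and comes with an irreducible $O(\eps)$ error, which would only locate the trajectory within $O(\eps)$ of $P^\eps$, whereas $S_\eps^{[\infty]}$ has $P^\eps$ as an exact stationary point and an exponentially small error.

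\emph{Step 3: from $S_\eps^{[\infty]}$ to $S_\eps^{sf}$.} I would close the argument with the shadowing principle of Theorem~\ref{errorbound}(ii), in its finite-horizon form so as to avoid any a~priori global bound on $X^\eps$. Using Step~2, pick $T^*$ so large that $Z^{\eps,[\infty]}(T^*)\in B(P^\eps,\rho/2)$; the shadowing estimate on $[0,T^*]$ gives $\|Z^\eps(T^*)-Z^{\eps,[\infty]}(T^*)\|_{E\times F}\le C\exp(-\mu' T^*/\eps)$, which is $<\rho/2$ for $T^*$ large. Thus $Z^\eps(T^*)\in B(P^\eps,\rho)$, and the local asymptotic stability of $P^\eps$ (Proposition~\ref{main1general}(ii)) forces $Z^\eps(t)\to P^\eps$, which is the desired conclusion. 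If moreover $X^\eps$ is bounded uniformly in $t$, as in the chemostat application via Theorem~\ref{chap5thmaj}, one may instead invoke the $T=+\infty$ version of Theorem~\ref{errorbound}(ii) directly and conclude in one line.
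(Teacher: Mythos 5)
Your overall strategy is the paper's: steer the true trajectory into the $\eps$-independent basin $B(P^\eps,r)$ furnished by Lemma \ref{bassin}, then let the local asymptotic stability of $P^\eps$ (Proposition \ref{main1general}(ii)) finish the job. The difference is in how you get into that basin, and your choice creates a gap that the paper's choice avoids. You route through $S_\eps^{[\infty]}$ and therefore must first prove (Step 1) that $p^\eps$ attracts all of $\mathcal{Q}$ under the \emph{perturbed} flow $S_\eps^c$; as you yourself flag, the regular-perturbation estimate \eqref{eq1} carries a Gronwall constant $C_T$ growing with $T$, and $T$ depends on the initial datum, so the admissible $\eps$ depends on $\xi_0$ unless the attraction of $p^0$ is uniform over $\mathcal{Q}$. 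In the intended applications $\mathcal{Q}=\R_+^{N+1}$ is unbounded, so this uniformity is not free, and the statement requires a single $\eps_0$ valid for all data in $Q_E\times Q_F$.

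The paper sidesteps Step 1 entirely by shadowing with $S_\eps^{[0]}$ rather than $S_\eps^{[\infty]}$: the $X$-component of $S_\eps^{[0]}$ is \emph{exactly} the flow $S^0(t)$ of $S_0^c$ started from $X_0^\eps\in\mathcal{Q}$, so assumption (i) directly gives a time $T$ (datum-dependent, but harmlessly so) after which $Z^{\eps,[0]}(t)$ lies within $r/3$ of $P^0$. Combining this with the shadowing bound $\|Z^\eps(t)-Z^{\eps,[0]}(t)\|\leq C(\eps+e^{-\mu' t/\eps})$ and $\|P^0-P^\eps\|=O(\eps)$ puts $Z^\eps(t)$ inside $B(P^\eps,r/2)$ for large $t$, under the single datum-independent smallness condition $C\eps\lesssim r$ (the exponential term dies on its own as $t\to\infty$). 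This also shows that your parenthetical claim in Step 2 — that $S_\eps^{[0]}$ is unusable because its irreducible $O(\eps)$ error ``would only locate the trajectory within $O(\eps)$ of $P^\eps$'' — is mistaken: $O(\eps)$ proximity is all that is needed, precisely because the basin radius $r$ from Lemma \ref{bassin} does not shrink with $\eps$. Your Steps 2 and 3 are otherwise sound, and your opening observation that a hyperbolic global attractor in $\mathcal{Q}$ must be linearly asymptotically stable is correct and implicitly used by the paper as well. To repair your argument without changing its architecture, replace Step 1 by the paper's comparison with $S_\eps^{[0]}$, or add the hypothesis that the attraction of $p^0$ is uniform on $\mathcal{Q}$.
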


\begin{remark}Since the modified initial data $X_0^\eps$ is $\eps$-close to $X_0$, 
if $Q_E \subset \text{int}(\mathcal{Q})$, then for $\eps$ small enough, the assumption $(ii)$ is satisfied. 
The only difficulty in the application is when $Q_E\cap \partial \mathcal{Q}\neq \emptyset$ 
which may  occur when we deal with dynamics in the nonnegative cadrant, see  lemma \ref{lemmarepuls}. 
\end{remark}


\begin{proof}
Let $p^0$ be an linearly asymptotically stable stationary solution of $S_0^c$. 
By the Theorem \ref{main1general}, the  steady state $P^{\eps}=(p^{\eps},h(p^{\eps},\eps))$ exists  and is a  local attractor.
Besides,   by the lemma \ref{stationaryS0} and the smoothness of $h$, one gets for some positive constant  $C'$ independent on  $\eps$,
\begin{equation}\label{errPet} 
\|p^{\eps}-p^0\|_E+\|h(p^{\eps},\eps)-h(p^{0},\eps)\|_F\leq C'\eps.
\end{equation} 
Let  $Z^\eps(t)=(X^\eps(t),Y^\eps(t))$ be the solution of $S_\eps^{sf}$ with initial data $(X_0,Y_0)\in Q_E\times Q_F$ 
 and $Z^{\eps,[0]}(t)=(X^0(t),h(X^0(t),\eps))$ be the solution of $S_\eps^{[0]}$  
with initial data $Z^{\eps,[0]}(0):=(X_0^\eps,h(X_0^\eps,\eps))$ given in the Theorem \ref{errorbound}. \\
By  Theorem  \ref{errorbound}, it comes for some positive constants  $C$ and $\mu'$ and any $t\geq 0$ and small enough $\eps$, the bound
\begin{equation}\label{errorCeps}
\|Z^\eps(t)-Z^{\eps,[0]}\|_{E\times F}\leq C(\eps+ exp(-\mu'\frac{t}{\eps})).
\end{equation}
Let  $r>0$ be the size of the basin of attraction given in the lemma \ref{bassin}. 
By the assumption $(ii)$,  $X_0^\eps\in\mathcal{Q}$ and by the assumption $(i)$, $p^0$ is a global attractor of $S_0^{c}$ in $\mathcal{Q}$. This implies 
\begin{equation*}
\exists T>0, \forall t\geq T, \|X^{0}(t)-p^0\|_E\leq r/4.
\end{equation*}
By the continuity of $X\mapsto h(X,\eps)$, this yields 
\begin{equation}\label{globattract0}
\exists T>0, \forall t\geq T, \|Z^{\eps,[0]}(t)-P^{0}\|_{E\times F}
:=\|X^{0}(t)-p^0\|_E+\|h(X^{0}(t),\eps)-h(p^0,\eps)\|_F\leq r/3.
\end{equation}
Besides, for all  $t\geq 0$,
\begin{equation*}
\|Z^\eps(t)-P^{\eps}\|_{E\times F}\leq \|Z^\eps(t)-Z^{\eps,[0]}(t)\|_{E\times F}+\|Z^{\eps,[0]}(t)-P^{0}\|_{E\times F}+\|P^0-P^{\eps}\|_{E\times F}
\end{equation*}
The inequalities \ref{errPet}, \ref{errorCeps} and \ref{globattract0}, imply that there exists 
a constant $C''$ independent of $\eps$ and of $r$ such that,
\begin{equation}\label{eqglobfinal}
\exists T>0, \forall t\geq T, \|Z^\eps(t)-P^{\eps}\|_{E\times F}\leq C''(\eps+exp(-\mu\frac{t}{\eps}))+r/3.
\end{equation}
Choosing  $\eps$ small enough such that $C''(\eps+exp(-\mu\frac{T}{\eps}))\leq r/6$, \eqref{eqglobfinal} yields 
\begin{equation}\label{prochebassin}
\exists T>0, \forall t\geq T, \|Z^\eps(t)-P^{\eps}\|_{E\times F}\leq r/2.
\end{equation}
Arguing as in the proof of the Theorem \ref{main1general}, if $\eps$ is small enough, 
\eqref{prochebassin} implies $\|Z^\eps(t)-P^{\eps}\|_{E\times F}\to 0$ 
as needed.
\end{proof}

\section{Proofs of Theorems \ref{mains1} and \ref{mains2}}\label{proofs}
In this section, we begin by showing that the  Theorems \ref{centralmanifold} and \ref{errorbound} 
apply to our particular system $S_\eps$. Then we give the proof of the main results
\subsection{Application of the Central Manifold theorem}
The precise definitions of the operators $A_i^\infty$ and $K^\infty=diag(A_i^\infty)$ are given in  section \ref{sec22} 
as well as the definitions of  the banach spaces
$E=ker(K^\infty)=\R^{N+1}$ and $F=Im(K^\infty)$.
In the case of the system $S_\eps$, one gets explicitly, with the notation of the section \ref{sec22}, 
$$X^\eps:=(r^\eps,u_1^\eps\ldots,u_N^\eps)=\Pi_E\left( R^\eps, U_1^\eps,\cdots, U_N^\eps\right)
:=\left(\frac{1}{|\Omega|}\int_\Omega R^\eps,\frac{1}{|\Omega|}\int_\Omega U_1^\eps,\cdots, \frac{1}{|\Omega|}\int_\Omega U_N^\eps\right)$$

$$
Y^\eps(x):=(Y_0^\eps(x),\ldots,Y_N^\eps(x))=\Pi_F\left( R^\eps, U_1^\eps,\cdots, U_N^\eps\right)(x)
:= \left( R^\eps(x)-r^\eps, U_1^\eps(x)-u_1^\eps,\ldots, U_N^\eps(x)-u_N^\eps\right).\\
$$
Of course, with these notations, one has $X^\eps+Y^\eps=\left( R^\eps, U_1^\eps,\cdots, U_N^\eps\right)$. Finally, for any $x\in\Omega$,
$$\mathcal{F}(X^\eps+Y^\eps)(x)=
\left(\begin{array}{c}
I(x)-m_0(x)\big(r^\eps+Y_0^\eps(x)\big)-\sum\limits_{i=1}^N f_i(r^\eps+Y_0^\eps(x),x)\big(u_i^\eps+Y_i^\eps(x)\big) \\
 \Big(f_1(r^\eps+Y_0^\eps(x),x)-m_1(x)\Big)\big(u_1^\eps+Y_1^\eps(x)\big) \\
\vdots\\
 \Big(f_N(r^\eps+Y_0^\eps(x),x)-m_N(x)\Big)\big(u_N^\eps+Y_N^\eps(x)\big) \end{array}\right)
$$
and
$$\mathcal{F}_0(X^\eps,Y^\eps)=\Pi_E\mathcal{F}(X^\eps+Y^\eps) \text{ and } \mathcal{G}_1(X^\eps,Y^\eps)(x)=\Pi_F\mathcal{F}(X^\eps+Y^\eps)(x) .$$
Note that 
$$\mathcal{F}_0\;:\; E\times F\to E\;\text{ and }\; \mathcal{G}_1\;:\; E\times F\to F.$$


We first show that the operator $K=diag(A_i)$ define a $C^0$ semi-group of contraction on $F$.

\noindent

The assumed smoothness of $\partial \Omega$ implies that  the operator $A_i^\infty$ generates a 
$C^0$ semi-group of contraction on $C^0(\overline{\Omega})$ (see \cite{markus}). 
Denoting $exp(t A_i^\infty)$ this semi-group, this reads

$$\forall t\geq 0,\; \|exp\left(tA_i^\infty\right) v\|_\infty\leq \| v\|_\infty. $$

The following lemma is a well know result  using   the gap between the two first eigenvalues of $A_i^\infty$.
\begin{lemma}\label{lemmapazy}
 The restriction $A_i$ of $A_i^\infty$ to the subspace $\widetilde{F}:=\{u\in C^0(\overline{\Omega}),\,\int_\Omega u=0\}$ is the generator of a $C^0$ semi-group of strict contraction $exp(tA_i)$ on ${\widetilde{F}}$ verifying for some $\mu_i>0$
\begin{equation}\label{expdecay}
\forall v\in \widetilde{F},\;\|exp(t A_i) v\|_\infty\leq  e^{-\mu_i t}\|v\|_\infty.
\end{equation}
\end{lemma}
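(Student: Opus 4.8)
The plan is to analyze the operator $A_i$ through its spectral decomposition on $L^2(\Omega)$ and then transfer the resulting exponential decay estimate to the uniform $C^0$ setting. First I would recall that the operator $A_i^2 = \mathrm{div}(a_i(x)\nabla\cdot)$ with homogeneous Neumann boundary conditions is self-adjoint on $L^2(\Omega)$, has compact resolvent (since $\Omega$ is bounded with smooth boundary and $a_i \in C^1(\overline{\Omega})$ is strictly positive), and therefore admits a discrete spectrum $0 = \mu_0 > -\mu_1^{(2)} > \cdots$, with the first eigenvalue being $0$ associated to the constant eigenfunction $1$, precisely because $\ker(A_i^\infty) = \mathrm{span}(1)$. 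The crucial point is the spectral gap: the second eigenvalue is strictly negative, and I would denote $\mu_i > 0$ such that $-\mu_i$ is the largest nonzero eigenvalue (i.e. the spectral bound of the restriction of $A_i^2$ to the orthogonal complement $\widetilde{F} \cap L^2$ of the constants).

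Next I would establish the $L^2$ decay estimate on $\widetilde{F}$. Since $\widetilde{F} = \{u : \int_\Omega u = 0\}$ is exactly the orthogonal complement of the kernel in $L^2$, and since $A_i^2$ restricted to this invariant subspace has spectrum contained in $(-\infty, -\mu_i]$, the spectral theorem for self-adjoint operators gives $\|\exp(tA_i)v\|_{L^2} \leq e^{-\mu_i t}\|v\|_{L^2}$ for all $v \in \widetilde{F}$. The generation of an analytic (hence $C^0$) semigroup on $L^2$ follows from self-adjointness and sectoriality.

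The main obstacle, and the step requiring the most care, is upgrading this $L^2$ decay to the $\|\cdot\|_\infty$ decay asserted in \eqref{expdecay}. The plan here is to exploit the ultracontractivity (or parabolic smoothing) of the semigroup generated by the uniformly elliptic operator $A_i^2$: by standard parabolic regularity (see \cite{Henry, Pazy}), there is a time $t_0 > 0$ and a constant $C_0$ such that $\|\exp(t_0 A_i^\infty)v\|_\infty \leq C_0\|v\|_{L^2} \leq C_0\sqrt{|\Omega|}\,\|v\|_\infty$, and the semigroup maps $L^2$ into $C^0(\overline{\Omega})$ for positive times. Combining the $L^\infty$-contraction $\|\exp(tA_i^\infty)v\|_\infty \leq \|v\|_\infty$ already recalled before the lemma with the $L^2$ decay and this smoothing property, I would show that for $t \geq t_0$ one has $\|\exp(tA_i)v\|_\infty \leq C_0\|\exp((t-t_0)A_i)v\|_{L^2} \leq C_0 e^{-\mu_i(t-t_0)}\|v\|_{L^2} \leq C_1 e^{-\mu_i t}\|v\|_\infty$ on $\widetilde{F}$.

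Finally, to obtain the clean estimate \eqref{expdecay} with constant exactly $1$ (strict contraction), I would use a semigroup/interpolation argument: having produced some $t_1$ with $\|\exp(t_1 A_i)\|_{\widetilde{F}\to\widetilde{F}} \leq q < 1$, the semigroup law gives $\|\exp(t A_i)v\|_\infty \leq q^{\lfloor t/t_1\rfloor}\|v\|_\infty$, and absorbing constants into the exponent (possibly shrinking $\mu_i$) yields $\|\exp(tA_i)v\|_\infty \leq e^{-\mu_i t}\|v\|_\infty$ for all $t \geq 0$, where for small $t$ the bare $L^\infty$-contraction already controls the estimate. This matches exactly the hypothesis of Theorem \ref{centralmanifold}, which is the point of the lemma: the invariance of $\widetilde{F}$ under $A_i^\infty$ together with the spectral gap furnishes precisely the uniform exponential decay required to run the central manifold machinery on the fast variable.
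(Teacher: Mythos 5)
Your argument is correct in substance but follows a genuinely different route from the paper. The paper stays entirely in the $C^0$ setting: it observes that $\widetilde{F}$ is closed and invariant under $\exp(tA_i^\infty)$, invokes Pazy \cite{Pazy} (p.~123) to get generation of a contraction semigroup on the restriction, notes that $\sigma(A_i)\subset\sigma(A_i^\infty)\setminus\{0\}\subset\left]-\infty,-\lambda_1\right]$, and then applies the spectrum-determined-growth theorem (Pazy, Theorem 4.3 p.~118, valid for differentiable/analytic semigroups) to convert the spectral gap directly into exponential decay in $\|\cdot\|_\infty$. You instead pass through $L^2$, where self-adjointness makes the decay on the mean-zero subspace immediate, and transfer it to $C^0(\overline{\Omega})$ via ultracontractivity; this works, at the price of having to justify the $L^2\to L^\infty$ smoothing bound (standard for uniformly elliptic divergence-form operators with Neumann conditions, but an extra ingredient the paper avoids). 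One caveat: your closing step does not actually recover the constant $1$ claimed in \eqref{expdecay}. The iteration $\|\exp(tA_i)v\|_\infty\leq q^{\lfloor t/t_1\rfloor}\|v\|_\infty$ yields $\|\exp(tA_i)v\|_\infty\leq C e^{-\mu_i t}\|v\|_\infty$ with $C=1/q>1$, and for $0<t<t_1$ the bare $L^\infty$-contraction only gives $\|\exp(tA_i)v\|_\infty\leq\|v\|_\infty$, which is \emph{not} bounded by $e^{-\mu_i t}\|v\|_\infty$; shrinking $\mu_i$ cannot repair this. This is a defect of the lemma's statement rather than of your argument: the paper's own proof via Pazy's Theorem 4.3 likewise only produces $Ce^{-\mu_i t}$, and a constant $C$ is all that the hypothesis of Theorem \ref{centralmanifold} requires, so nothing downstream is affected.
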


\begin{proof}
 $\widetilde{F}$ is closed in $C^0(\overline{\Omega})$ and is clearly invariant under $exp(tA_i^\infty)$. 
 It follows (Pazy \cite{Pazy} p. 123) that $A_i$ is the generator of a $C^0$ 
 semi-group of contraction on $\widetilde{F}$.\\
 It is well known that the spectrum $\sigma(-A_i^\infty)$ is a sequence of real nonnegative scalars
 $$0=\lambda_0<\lambda_1\leq \cdots $$
Since $\sigma(A_i)\subset \sigma(A_i^\infty)$ and $0\notin \sigma(A_i)$ 
one see that  $\sigma(A_i)\subset ]-\infty,-\lambda_1]$ 
and an application of the Theorem 4.3 p 118 in Pazy \cite{Pazy} end the proof.
\end{proof}

Noting $\mu=\min \{\mu_0,\cdots,\mu_N\}$ where $\mu_i$ is as in \eqref{expdecay} and  
$K:=diag(A_i)$, and $F=\widetilde{F}^{N+1}$, the  lemma \ref{lemmapazy} implies directly

\begin{proposition}\label{expdecayK}
$K$ is the generator of a $C^0$ semi-group $exp(tK)$ on $F$ verifying
$$\|exp(tK) v\|_F\leq e^{-\mu t} \|v\|_F.$$
\end{proposition}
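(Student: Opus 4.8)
The plan is to deduce everything directly from Lemma \ref{lemmapazy}, since $K=\mathrm{diag}(A_i)$ is nothing but the componentwise action of the individual operators $A_i$ on the product space $F=\widetilde{F}^{N+1}$. The whole content of the proposition is already carried by the scalar (per-component) estimate \eqref{expdecay}, so the work reduces to assembling the components and tracking the exponential rate.

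First I would invoke the standard fact that a finite product (direct sum) of generators of $C^0$ semigroups is itself the generator of a $C^0$ semigroup on the product space, namely the product semigroup acting diagonally. Since each $A_i$ generates the $C^0$ semigroup $\exp(tA_i)$ on $\widetilde{F}$ by Lemma \ref{lemmapazy}, the operator $K$, with domain $D(K)=\prod_{i=0}^N D(A_i)$, generates the $C^0$ semigroup
$$\exp(tK)(v_0,\ldots,v_N)=\big(\exp(tA_0)v_0,\ldots,\exp(tA_N)v_N\big).$$
This can be checked either by verifying the semigroup and strong continuity axioms coordinate by coordinate, or by applying the Hille--Yosida characterization to $K$, whose resolvent is simply the diagonal of the resolvents of the $A_i$. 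Since $N+1$ is finite, no subtlety concerning the domain or the behavior of the product semigroup arises.

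The decay estimate is then a one-line computation exploiting the sum structure of the norm $\|(U_0,\ldots,U_N)\|_F=\sum_{i=0}^N\|U_i\|_\infty$. For $v=(v_0,\ldots,v_N)\in F$ and $t\ge 0$, the bound \eqref{expdecay} gives
$$\|\exp(tK)v\|_F=\sum_{i=0}^N\|\exp(tA_i)v_i\|_\infty\le \sum_{i=0}^N e^{-\mu_i t}\|v_i\|_\infty.$$
Because $\mu=\min\{\mu_0,\ldots,\mu_N\}\le \mu_i$ for every $i$, one has $e^{-\mu_i t}\le e^{-\mu t}$ for all $t\ge 0$, whence
$$\|\exp(tK)v\|_F\le e^{-\mu t}\sum_{i=0}^N\|v_i\|_\infty=e^{-\mu t}\|v\|_F,$$
which is exactly the claimed inequality.

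There is no serious obstacle here: everything is contained in Lemma \ref{lemmapazy}, and the only point requiring any care is the passage from the componentwise rates $\mu_i$ to the uniform rate $\mu$, which is precisely why $\mu$ is defined as the minimum of the $\mu_i$. The generation statement is the routine \emph{diagonal semigroup on a finite product of Banach spaces} fact, so I would keep that part short and spend the (minimal) effort on the norm estimate above.
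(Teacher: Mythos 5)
Your proof is correct and is exactly the argument the paper intends: the paper simply states that Lemma \ref{lemmapazy} "implies directly" the proposition after setting $\mu=\min\{\mu_0,\cdots,\mu_N\}$, and your write-up fills in precisely those routine details (the diagonal product semigroup and the sum-norm estimate with the minimal rate).
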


\noindent Now, we show that the functions $\mathcal{F}_0=\Pi_E\mathcal{F}$ and $\mathcal{G}_1=\Pi_F \mathcal{F}$  are smooth enough. 

\begin{lemma}
The functions $\mathcal{F}_0$ and $\mathcal{G}_1$ have $C^1$ smoothness when acting on $E\times F$.
\end{lemma}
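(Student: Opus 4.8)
The plan is to verify the $C^1$ smoothness of $\mathcal{F}_0$ and $\mathcal{G}_1$ directly from their definitions $\mathcal{F}_0=\Pi_E\mathcal{F}$ and $\mathcal{G}_1=\Pi_F\mathcal{F}$, together with the smoothness of the Nemitski operator $\mathcal{F}$ on $\left(C^0(\overline{\Omega})\right)^{N+1}$. Since $\Pi_E$ and $\Pi_F=I_d-\Pi_E$ are bounded \emph{linear} projections (indeed $\Pi_E$ is given by the explicit integral averaging formula of Section \ref{sec22}), composing with them preserves $C^1$ regularity and does not affect differentiability. Thus the whole problem reduces to showing that the Nemitski operator $\bW\mapsto\mathcal{F}(\bW)$, restricted to $E\times F\cong\left(C^0(\overline{\Omega})\right)^{N+1}$ via the isomorphism $(X,Y)\mapsto X+Y$, is $C^1$. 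The first thing I would do is record that it suffices to treat $\mathcal{F}$ componentwise, and that each component is a finite sum/product of the building blocks $I(x)$, $m_i(x)$, the linear terms $R,U_i$, and the nonlinear superposition terms $f_i(x,R)$.

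The heart of the matter is therefore the Nemitski (substitution) operator associated with the consumption functions: the map sending a continuous function $R\in C^0(\overline{\Omega})$ to the continuous function $x\mapsto f_i(x,R(x))$. The key step is to invoke the standard theory of superposition operators on spaces of continuous functions. Under Assumption \ref{chap5hypf}, for each $i$ the function $(x,R)\mapsto f_i(x,R)$ is $C^1$ jointly (it is $C^1$ in $x$ for fixed $R$, $C^1$ in $R$ for fixed $x$, with $R\mapsto D_Rf_i(x,R)$ locally Lipschitz), so that the superposition operator is Fréchet differentiable from $C^0(\overline{\Omega})$ into itself, with derivative at $R$ given by multiplication by the continuous function $x\mapsto D_Rf_i(x,R(x))$. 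Concretely, I would estimate
\[
\big\|f_i(\cdot,R+H)-f_i(\cdot,R)-D_Rf_i(\cdot,R)\,H\big\|_\infty
=\sup_{x}\Big|\int_0^1\big(D_Rf_i(x,R(x)+sH(x))-D_Rf_i(x,R(x))\big)\,ds\;H(x)\Big|,
\]
and control the integrand uniformly in $x$ using the local Lipschitz (equivalently, uniform continuity on compacts) property of $D_Rf_i$, so that the right-hand side is $o(\|H\|_\infty)$ as $\|H\|_\infty\to0$. Continuity of the derivative $R\mapsto\big(H\mapsto D_Rf_i(\cdot,R)H\big)$ in operator norm follows from the same uniform-continuity argument. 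The fact that all functions stay in a fixed compact $R$-range is guaranteed because the arguments we differentiate at lie in a bounded set, so "locally Lipschitz" is effectively "Lipschitz on the relevant range."

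Once the superposition operators are shown to be $C^1$, the remaining steps are routine algebra of differentiable maps: products such as $f_i(x,R)\,U_i$ are handled by the Leibniz rule (bilinear continuous maps $C^0\times C^0\to C^0$ are smooth, since $\|gh\|_\infty\le\|g\|_\infty\|h\|_\infty$), finite sums preserve $C^1$ regularity, and the fixed multipliers $m_i(x)$ and the additive term $I(x)$ are harmless. Assembling these componentwise shows $\mathcal{F}$ is $C^1$ on $\left(C^0(\overline{\Omega})\right)^{N+1}$, and composing on the left with the bounded linear projections $\Pi_E,\Pi_F$ gives the $C^1$ smoothness of $\mathcal{F}_0$ and $\mathcal{G}_1$. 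I expect the only genuine obstacle to be the Fréchet differentiability of the superposition operator $R\mapsto f_i(\cdot,R(\cdot))$ in the sup-norm — one must avoid the classical pitfall that Nemitski operators on $L^p$ spaces are often \emph{not} differentiable; here we are saved precisely because we work in $C^0(\overline{\Omega})$ with the $\|\cdot\|_\infty$ norm and because the $R$-values encountered range over a compact set on which $D_Rf_i$ is uniformly continuous, which is exactly what the local Lipschitz hypothesis provides.
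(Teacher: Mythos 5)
Your proposal is correct and follows essentially the same route as the paper: reduce to the Nemitski operator $\mathcal{F}$ acting componentwise on $\left(C^0(\overline{\Omega})\right)^{N+1}$, exploit the local Lipschitz property of $f_i$ and $D_Rf_i$ on the compact range of the arguments, and observe that the bounded linear projections $\Pi_E,\Pi_F$ (with $\mathcal{G}_1=\mathcal{F}-\mathcal{F}_0$) are harmless. In fact you supply more detail than the paper does for the $C^1$ part — the paper only writes out the Lipschitz ($C^0$) estimate and omits the differentiability argument, whereas your integral-remainder estimate for the superposition operator $R\mapsto f_i(\cdot,R(\cdot))$ is exactly the step it leaves to the reader.
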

\begin{proof}
By assumption 2.1 and 2.2, $\mathcal{F}$ is $C^1$ from $E\oplus F$ into itself. 
The only difficulty is the presence of the linear operators $\Pi_E$ and $\Pi_F$. 
Since $\mathcal{G}_1=\mathcal{F}-\mathcal{F}_0$ it suffices to prove  lemma for $\mathcal{F}_0$. 
These functions have $N+1$ components. Denote $\mathcal{F}^{i}$ and $\mathcal{F}^i_{0}$ the $i^{\text{th}}$ component of $\mathcal{F}$ 
and $\mathcal{F}_0$. Taking $(X,Y)$ and $(X',Y')$
both belonging  to some compact subset $\mathcal{K}\subset E\times F$, one gets for all $x\in \overline{\Omega}$ and $i=0,\cdots,N$, 
using the fact that $\mathcal{F}^i(\cdot,x)$ is locally Lipschitz and $\mathcal{F}^i(X+Y,\cdot)$ is smooth,
\begin{equation*}
|\mathcal{F}^{i}(x,X+Y(x))-\mathcal{F}^{i}(x,X'+Y'(x))|\leq C(\mathcal{K})\left(\|X-X'\|_E+\|Y-Y'\|_F\right)
\end{equation*}
where $C(\mathcal{K})$ is a positive constant depending on $\mathcal{K}$. Since $\mathcal{F}_0^i=\frac{1}{|\Omega|}\int_\Omega \mathcal{F}^i$, this yields 
\begin{equation*}
|\mathcal{F}_{0}^i(X,Y)-\mathcal{F}_{0}^i(X',Y')|\leq C(\mathcal{K})\left(\|X-X'\|_E+\|Y-Y'\|_F\right)
\end{equation*}
for all $i=0,\cdots,N$. It follows
\begin{equation*}
\|\mathcal{F}_{0}(X,Y)-\mathcal{F}_{0}(X',Y')\|_E\leq (N+1)C(\mathcal{K})\left(\|X-X'\|_E+\|Y-Y'\|_F\right)
\end{equation*}
which proves that $\mathcal{F}_0$ is $C^0$ from $E\times F$ into $E$ and so  is $\mathcal{G}_1$  from $E\times F$ into $F$.\\

Since $R\mapsto f_i(R,x)$ is assumed to be $C^1$ with  locally Lipschitz  derivative, the proof of the $C^1$ smoothness follows the same lines and we omit it.

\end{proof}

\noindent The theorem \ref{centralmanifold} also requires that $\mathcal{F}_0$ and $\mathcal{G}_1$ 
as well than their derivatives are   bounded independently on $\eps$. 
Obviously, this boundedness assumption does not hold in general. 
However, by  theorem \ref{chap5thmaj}, one already knows that every solution is bounded in $(C^0(\overline{\Omega}))^{N+1}$ independently on $\eps$ and $t$. 
It follows with the definition of the norm $E\times F$ that, for some large enough $M>0$, we have
$$\|X^\eps(t)\|_E+\|Y^\eps(t)\|_F\leq  M.$$ 
It then suffices to conveniently truncate $\mathcal{F}_0$ and $\mathcal{G}_1$
 outside the set $\{(X,Y)\in E\times F,\; \|X(t)\|_E+\|Y(t)\|_F\leq M\}$.\\

\noindent It follows that  Theorems \ref{centralmanifold} and \ref{errorbound} as well as propositions \ref{main1general} and \ref{main2general}
apply to the system $S_\eps^{sf}$ 
(defined in section \ref{sec22}). 
\subsection{Proof of the Theorem \ref{mains1} and \ref{mains2}}
Now, we apply the propositions \ref{main1general} and \ref{main2general} to the case of $S_\eps^{sf}$.\\ 
Since we are interested in biologically relevant solutions, we are only interested in nonnegative solutions 
which leads to some additional difficulties.
Let us start with the following lemma with is the key to deal with the positive quadrant near the boundaries.

\begin{lemma}\label{lemmaconservation} Let $\mathcal{M}^\eps=\{(X,h(X,\eps)),X\in E\}$ be a central manifold for $S_\eps^{sf}$ 
defined in Theorem \ref{centralmanifold}.
Denote $h(X,\eps)=(h_i(X,\eps))_{0\leq i\leq N}\in F$ and  $X=(r,u_1,\cdots,u_N)\in E.$\\
Then  there exists a function $g\in C^0 (E\times [0,1]; F)$ 
 such that for any $i=1,\cdots,N$ one has
$$h_i(X,\eps)=u_i g_i(X,\eps).$$
\end{lemma}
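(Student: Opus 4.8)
The plan is to exploit the structural fact that, for each $i\ge 1$, the equation governing species $i$ is linear and homogeneous in $U_i^\eps$: the ``absence of species $i$'' is then preserved by the flow, this invariance transfers to the central manifold and forces $h_i$ to vanish on $\{u_i=0\}$, after which a Taylor/Hadamard factorization produces $g_i$. To make the invariance precise, write $X=(r,u_1,\dots,u_N)$, $Y=(Y_0,\dots,Y_N)$ and $\langle\cdot\rangle=\frac{1}{|\Omega|}\int_\Omega\cdot\,$. For $i\ge 1$ the $i$-th components of the reaction terms are
$$\mathcal{F}_0^i(X,Y)=\big\langle (f_i(r+Y_0,\cdot)-m_i)(u_i+Y_i)\big\rangle,\qquad \mathcal{G}_1^i(X,Y)=(f_i(r+Y_0,\cdot)-m_i)(u_i+Y_i)-\mathcal{F}_0^i(X,Y),$$
which are \emph{linear and homogeneous} in the pair $(u_i,Y_i)$, with coefficients depending only on $(r,Y_0)$. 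Hence the set $H_i:=\{(X,Y)\in E\times F:\ u_i=0,\ Y_i=0\}$ (that is, $U_i^\eps\equiv 0$) is invariant under $S_\eps^{sf}$, since on $H_i$ one reads off $\frac{d}{dt}u_i=0$ and $\frac{d}{dt}Y_i=0$.

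Next I would build the reduced central manifold inside $H_i$. The restriction of $S_\eps^{sf}$ to the invariant set $H_i$ is again a slow-fast system of exactly the same form, now for the $N-1$ remaining species, so Theorem \ref{centralmanifold} applies and yields a central manifold $\mathcal{M}^{\eps,(i)}=\{(X,h^{(i)}(X,\eps)):u_i=0\}$, where $h^{(i)}$ takes values in $\{Y\in F:Y_i=0\}$ (so its $i$-th component is identically $0$) and satisfies the global bound $\|h^{(i)}(\cdot,\eps)\|_\infty=O(\eps)$. Because $H_i$ is invariant for the full flow, $\mathcal{M}^{\eps,(i)}$ is an invariant, globally $O(\eps)$-bounded $C^1$ graph for $S_\eps^{sf}$ as well.

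The crux is to identify $\mathcal{M}^{\eps,(i)}$ with the slice $\{u_i=0\}$ of $\mathcal{M}^\eps$, and this is where I expect the genuine difficulty: a naive forward-attraction argument through Theorem \ref{errorbound}(i) only gives $h_i\to 0$ asymptotically along trajectories, not pointwise at an arbitrary base point. I would instead invoke the backward characterization of the attracting manifold coming from its Lyapunov-Perron construction (as in \cite{Castella2009}): a point belongs to $\mathcal{M}^\eps$ as soon as it admits a complete orbit whose $F$-component remains bounded for all $t\le 0$. After the truncation of $\mathcal{F}_0,\mathcal{G}_1$ already made to apply the theorem, the reduced flow on $\mathcal{M}^{\eps,(i)}$ is backward complete, and along any such orbit $Y(t)=h^{(i)}(X(t),\eps)$ stays bounded by $O(\eps)$ for all $t$ thanks to the global sup-bound on $h^{(i)}$. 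Thus every point of $\mathcal{M}^{\eps,(i)}$ carries an $F$-bounded backward orbit and therefore lies on $\mathcal{M}^\eps$; since $\mathcal{M}^\eps$ is a graph, this forces $h(X,\eps)=h^{(i)}(X,\eps)$ for every $X$ with $u_i=0$, and in particular $h_i(X,\eps)=0$ whenever $u_i=0$.

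Finally I would factor out $u_i$ by Hadamard's lemma. As $h_i(\cdot,\eps)$ is $C^1$ and vanishes on $\{u_i=0\}$, the function
$$g_i(X,\eps):=\int_0^1 \partial_{u_i}h_i\big(r,u_1,\dots,u_{i-1},s\,u_i,u_{i+1},\dots,u_N,\eps\big)\,ds$$
satisfies $u_i\,g_i(X,\eps)=h_i(X,\eps)$ by the fundamental theorem of calculus, and $g_i\in C^0(E\times[0,1];\widetilde{F})$ because $\partial_{u_i}h_i$ is jointly continuous in $(X,\eps)$ (the joint $C^1$ regularity of $h$ from Theorem \ref{centralmanifold}) and integration in $s$ preserves continuity. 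Collecting the $g_i$ for $i=1,\dots,N$ (and setting, say, $g_0:=0$ for the unconstrained component) yields the desired $g\in C^0(E\times[0,1];F)$, which completes the argument. The routine parts are the linearity computation and the Hadamard step; the one subtle point to get right is the pointwise identification of the two manifolds, which is why I rely on the backward-orbit characterization rather than on forward attraction alone.
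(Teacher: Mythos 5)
Your proof is essentially correct, and it reaches the same pivotal intermediate claim as the paper --- that $h_i(X,\eps)=0$ whenever $u_i=0$, after which Hadamard's lemma finishes the job exactly as you do it --- but by a genuinely different route. The paper's argument is much shorter and uses positivity: since the nonnegative quadrant is invariant for $S_\eps$ and $\mathcal{M}^\eps$ is invariant, the point of the manifold over any $X\in\R_+^{N+1}$ satisfies $u_i+h_i(X,\eps)(x)\geq 0$; letting $u_i\to 0$ gives $h_i(X_{|u_i=0},\eps)(x)\geq 0$ for all $x$, and since $h_i$ takes values in $\widetilde{F}$ it has zero mean over $\Omega$, so a nonnegative zero-mean continuous function must vanish identically. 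Your argument instead exploits the invariance of the hyperplane $\{U_i\equiv 0\}$ and identifies the slice $\{u_i=0\}$ of $\mathcal{M}^\eps$ with the central manifold of the restricted subsystem via the backward-bounded-orbit (Lyapunov--Perron) characterization. You correctly spot that forward attraction alone is insufficient here, which is the right instinct; the one caveat is that Theorem \ref{centralmanifold} as stated asserts only \emph{existence} of $h$, not uniqueness nor the backward characterization, so your identification step leans on properties of the construction in \cite{Castella2009} that the paper does not formally record (they do hold for the global Lyapunov--Perron manifold of the truncated system, so this is a presentational gap rather than a mathematical one). In exchange, your route buys generality: it does not use the preserved positive cone nor the zero-mean structure of $F$, so it would survive in settings where positivity is unavailable, whereas the paper's argument is shorter, self-contained, and tied to exactly the two structural facts (cone invariance and $\int_\Omega h_i=0$) it needs.
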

\begin{proof}
Since the nonnegative quadrant is invariant for $S_\eps$ 
and since $\mathcal{M}^\eps$ is invariant for $S_\eps^{sf}$, 
one sees that for any $X=(r,u_1,\cdots,u_N)\in \R_+^{N+1}$, one has for any $i=1,\cdots,N$ and $x\in \Omega$,
$u_i+h_i(X,\eps)(x)\geq 0.$
In particular, if $u_i=0$ it follows by the continuity of $h(\cdot,\eps)$ from $E$ to $F$
 that for all $x\in \Omega$, $h_i(X_{|u_i=0},\eps)(x)\geq 0$.\\
Besides, since $h(X,\eps)\in F$, one gets $\int_\Omega h_i(X_{|u_i=0},\eps)(x)dx=0$ and then $h_i(X_{|u_i=0},\eps)\equiv 0$.\\
Now, since $h(\cdot,\eps)\in C^1(E;F)$, one sees that 
$\frac{1}{u_i} h_i(X,\eps)$ converges in $F$ as $u_i\to 0$ and we are able to write 
$$h_i(X,\eps)=u_i g_i(X,\eps).$$
Since $h\in C^1(E\times [0,1]; F)$, the regularity of $g$ follows.
\end{proof}

The following lemma ensures that the stationary solutions of $S_\eps^{sf}$, constructed in the 
proposition \ref{main1general}, correspond to nonnegative stationary solutions of $S_\eps$.
\begin{lemma}\label{positivity}
Assume that the system $S_0^{c}$ admits a nonnegative hyperbolic stationary solution  denoted by 
$$p^0=(r^0,u_1^0,\cdots,u_N^0)\in\R_+^{N+1}.$$
Let $P^\eps(x)=(p^\eps,h(p^\eps,\eps)(x))$ be the stationary solution of $S_\eps^{sf}$ 
defined in the Theorem \ref{main1general}. The corresponding stationary solution of $S_\eps$ is denoted by
$$\bW^\eps(x)=p^\eps+h(p^\eps,\eps)(x):=(R^{\eps}(x),U_1^{\eps}(x),\cdots,U_N^{\eps}(x)).$$ 
Then for small enough $\eps>0$ one gets $R^\eps(x)>0$ for all $x\in \overline{\Omega}$ and 
$$u_i^0>0\Rightarrow U_i^\eps(x)> 0,\;\forall x\in\overline{\Omega}\;\text{ and } u_i^0=0\Rightarrow U_i^\eps \equiv 0.$$
\end{lemma}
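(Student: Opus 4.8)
The plan is to exploit two facts established earlier: the continuity $\eps\mapsto p^\eps$ with $p^\eps\to p^0$ furnished by Lemma \ref{stationaryS0} (and recalled in Proposition \ref{main1general}), and the bound $\|h(p^\eps,\eps)\|_F=O(\eps)$ from Theorem \ref{centralmanifold}. Writing $p^\eps=(r^\eps,u_1^\eps,\cdots,u_N^\eps)$ and $h(p^\eps,\eps)=(h_0,h_1,\cdots,h_N)$, the components of the stationary solution are $R^\eps=r^\eps+h_0(p^\eps,\eps)$ and $U_i^\eps=u_i^\eps+h_i(p^\eps,\eps)$, where $\|h_j(p^\eps,\eps)\|_\infty\leq\|h(p^\eps,\eps)\|_F\leq C\eps$ uniformly in $x$. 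I will treat the three assertions separately.

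For the resource I first note that $r^0>0$ for every nonnegative stationary solution of $S_0^c$: if $r^0=0$ then $\widetilde{f_i}(0)=0$ (from $f_i(x,0)=0$ in Assumption \ref{chap5hypf}), and the first stationary equation of \eqref{reduced3} reduces to $\widetilde{I}=0$, contradicting $I\not\equiv 0$ in Assumption \ref{chap5hyp0}. Since $r^\eps\to r^0>0$ and $\|h_0(p^\eps,\eps)\|_\infty\leq C\eps$, one gets $R^\eps(x)\geq r^0-|r^\eps-r^0|-C\eps\geq r^0/2>0$ for all $x\in\overline{\Omega}$ once $\eps$ is small enough. The same direct estimate handles the case $u_i^0>0$: then $u_i^\eps\to u_i^0>0$, so $U_i^\eps(x)=u_i^\eps+h_i(p^\eps,\eps)(x)\geq u_i^0/2-C\eps>0$ uniformly in $x$ for small $\eps$.

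The remaining, genuinely delicate, assertion is $u_i^0=0\Rightarrow U_i^\eps\equiv 0$. By Lemma \ref{lemmaconservation}, $h_i(X,\eps)=u_i g_i(X,\eps)$, so that $U_i^\eps=u_i^\eps\big(1+g_i(p^\eps,\eps)\big)$; it therefore suffices to prove that the stationary point itself satisfies $u_i^\eps=0$. To this end I show that the hyperplane $H_i:=\{X\in E:\ u_i=0\}$ is invariant under $S_\eps^c$. Indeed, on $H_i$ Lemma \ref{lemmaconservation} gives $h_i(X,\eps)=0$, hence the $i$-th component of $\mathcal{F}(X+h(X,\eps))$ equals $\big(f_i(\cdot,r+h_0)-m_i\big)(u_i+h_i)=0$ pointwise, and taking the mean yields $\mathcal{F}_0^i(X,h(X,\eps))=0$; thus the $u_i$-equation of $S_\eps^c$ carries the factor $u_i$ and $H_i$ is invariant, just as $\{u_i=0\}$ is invariant for $S_0^c$.

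Finally I conclude by a uniqueness argument. Since $p^0$ is hyperbolic for $S_0^c$ and lies in the invariant hyperplane $H_i$, invariance forces $D_X\mathcal{F}_0(p^0,0)$ to be block triangular for the splitting $E=H_i\oplus\R e_i$, so its restriction to $H_i$ inherits the absence of a zero eigenvalue; hence $p^0$ is hyperbolic for the restricted system $S_0^c|_{H_i}$. Applying Lemma \ref{stationaryS0} to this restricted system produces a unique nearby stationary point $\widetilde{p}^\eps\in H_i$ of $S_\eps^c|_{H_i}$, which by invariance of $H_i$ is also a stationary point of the full system $S_\eps^c$ near $p^0$. By the uniqueness part of Lemma \ref{stationaryS0} for the full system, $\widetilde{p}^\eps=p^\eps$, whence $u_i^\eps=0$ and consequently $U_i^\eps=u_i^\eps\big(1+g_i(p^\eps,\eps)\big)\equiv 0$. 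The main obstacle is exactly this last step: unlike the strict-positivity statements, which are plain continuity and perturbation estimates, the vanishing $U_i^\eps\equiv 0$ cannot be read off the $O(\eps)$ size of $h$ and requires combining the factorization of Lemma \ref{lemmaconservation} with the invariance of $H_i$ and the uniqueness of the perturbed equilibrium.
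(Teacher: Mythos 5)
Your proof is correct and follows essentially the same route as the paper's: continuity of $\eps\mapsto p^\eps$ plus the $O(\eps)$ bound on $h$ for the strictly positive components, and, for the vanishing components, the factorization of Lemma \ref{lemmaconservation} combined with passage to the hyperbolic subsystem on $\{u_i=0\}$ and the uniqueness statement of Lemma \ref{stationaryS0}. The only (welcome) additions are the explicit justification that $r^0>0$ and the explicit verification that $\{u_i=0\}$ is invariant for $S_\eps^c$, both of which the paper leaves implicit.
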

\begin{proof}
Since $h(p^\eps,\eps)(x)=O(\eps)$, and $p^\eps \to p^0$ as $\eps \to 0$,  
if a component of $p^0$ is positive, so  is the corresponding component of $W^\eps(x)$  for small enough $\eps$.
 It is clear that $r^0>0$ and then $R^\eps(x)>0$ for all $x\in \overline{\Omega}$.  
Now, up to a rearrangement, suppose that $p^0=(r^0,u_1^0,\cdots,u_{N-1}^0,0)$. 
One knows that there exists a stationary solutions $W^\eps(x)=p^\eps+h(p^\eps,\eps)(x)$  of $S_\eps$. 
We now show that $U_N^\eps\equiv 0$. 
Thanks to the lemma \ref{lemmaconservation}, it suffices to show that $u_N^\eps:=p^\eps_N=0$.
Hence, define $\widetilde{S}^c_\eps$ the  subsystem without the species $N$ 
 similarly to the corresponding 
systems,  $S_\eps^{c}$.
Since $p^0$ is a hyperbolic stationary nonnegative solution of  $S_0^c$,
$\widetilde{p^0}=(r^0,u_1^0,\cdots,u_{N-1}^0)$ is a hyperbolic stationary non negative solution of $\widetilde{S}^c_0$. 
Lemma \ref{stationaryS0} applied to $\widetilde{S}_\eps^{c}$ allows to define  a stationary solution $\widetilde{p^\eps}$ of $\widetilde{\mathcal{S}}_\eps$. 
It follows that $(\widetilde{p^\eps},0)$ is a stationary solution of $S_\eps^{c}$  
and the uniqueness of $p^\eps$ in the neighborhood of $p^0$ yields to $p^\eps=(\widetilde{p^\eps},0)$, that is $u_N^\eps=0$ which end the proof.
\end{proof}\\
%
%
%
%
%
%
\begin{proof1} This theorem follows directly from the theorem \ref{stationaryagregated}
together with the proposition \ref{main1general} and  the lemma \ref{positivity}.
%
%
%
\end{proof1}\\


The proof of theorem \ref{mains2} uses strongly  proposition \ref{main2general}. 
The following lemma ensures that the assumption $(i)$ of this proposition
is satisfied.
\begin{lemma}\label{lemmarepuls}
Define the two subsets of $\R^{N+1}$:
$$\mathcal{Q}=\R_+^{N+1},\qquad \mathcal{Q}_1=\{(r,u_1,\ldots,u_N)\in \mathcal{Q},\; u_1>0\}$$
and, for  any positive scalar $\alpha$, define the two subsets of $\left( C^0(\overline{\Omega})\right)^{N+1}$
$$Q(\alpha):=\{(R,U_1,\ldots,U_N)\in C^0(\overline{\Omega}),\; \forall x\in \Omega,\; R(x)\geq\alpha\text{ and for each $i\geq 1$, } U_i(x)\geq 0\}$$
$$Q_1(\alpha):=\{(R,U_1,\ldots,U_N)\in Q(\alpha),\; \exists x\in \Omega,\;U_1(x) > 0\}.$$
For any initial data $\bW(0):=(R(0),U_{1}(0),\ldots, U_{N}(0))\in Q(\alpha)$, one notes $\Pi_E\bW(0)=(r(0),u_1(0),\cdots,u_N(0))$, $Z_0=\left(\Pi_E \bW(0),\Pi_F \bW(0)\right)\in E\times F$ and
 $X_0^\eps(Z_0)=(r^\eps(0),u_1^\eps(0),\cdots,u_N^\eps(0))$ the modified initial data defined in the theorem 
\ref{errorbound}-(iii). \\
For any $\alpha$, there exists $\eps(\alpha)>0$ such that for each $\eps\in(0,\eps(\alpha))$, the following holds true.

\begin{enumerate}[(i)]
 \item For any initial data $\bW(0)\in Q(\alpha)$ one gets $X_0^\eps(Z_0)\in \mathcal{Q}$.
 \item Assume that $r_1^*<r_j^*$ for any $j\neq 1$. Then, for any initial data $\bW(0)\in Q_1(\alpha)$ one gets $X_0^\eps(Z_0)\in \mathcal{Q}_1$.
\end{enumerate}

\end{lemma}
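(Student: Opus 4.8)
The plan is to verify the two claims by tracking how the modified initial data $X_0^\eps(Z_0)$ relates to the genuine initial data $X_0=\Pi_E \bW(0)$, using the construction recalled in Remark \ref{remarkx0}. The key structural fact is that $X_0^\eps(Z_0)$ is $\eps$-close to $X_0$ (by Theorem \ref{errorbound}-(iii)), so the strategy is a combination of an open-ness argument for strict inequalities and a conservation argument for equalities, exactly paralleling the proof of Lemma \ref{positivity}.

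First I would treat the resource component. For $\bW(0)\in Q(\alpha)$ one has $R(x)\geq \alpha$ for all $x$, hence the mean $r(0)=\frac{1}{|\Omega|}\int_\Omega R\geq \alpha>0$. Since $\|X_0^\eps(Z_0)-X_0\|_E=O(\eps)$, for $\eps$ small enough (depending on $\alpha$) the first component $r^\eps(0)$ stays $\geq \alpha/2>0$. So the resource component of $X_0^\eps(Z_0)$ is positive, uniformly over $Q(\alpha)$, and this is where the dependence $\eps(\alpha)$ enters. The delicate components are the $u_i$'s, because there $X_0$ may lie exactly on the boundary $u_i=0$, so the $\eps$-perturbation could in principle push $X_0^\eps(Z_0)$ to the negative side; this is precisely the difficulty flagged in the remark after Proposition \ref{main2general}, namely $Q_E\cap\partial\mathcal{Q}\neq\emptyset$.

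The way around this is the conservation-of-zero argument already used in Remark \ref{remarkx0}. The system $S_\eps^{sf}$ conserves the invariant hyperplanes $\{u_i=0\}$ (if species $i$ is absent it stays absent, a consequence of the multiplicative structure $\dt U_i^\eps=(f_i-m_i)U_i^\eps$ and the factorization $h_i(X,\eps)=u_i g_i(X,\eps)$ from Lemma \ref{lemmaconservation}), and so does the reduced system $S_\eps^{[0]}$. Therefore, following Remark \ref{remarkx0} verbatim, if $u_i(0)\geq 0$ then the flowed value $X_{i}^\eps(T)\geq 0$ for any fixed $T$ — indeed if $u_i(0)=0$ then $X_i^\eps(T)=0$ — and the backward-solved modified datum inherits $X_{0,i}^\eps(Z_0)\geq 0$, persisting in the limit $T\to+\infty$. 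Combining the positive resource component with these nonnegative species components gives $X_0^\eps(Z_0)\in \mathcal{Q}=\R_+^{N+1}$, which is claim (i).

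For claim (ii) the additional input is $\bW(0)\in Q_1(\alpha)$, i.e. $U_1(x_0)>0$ for some $x_0$. I would show the mean $u_1(0)=\frac{1}{|\Omega|}\int_\Omega U_1>0$: since $U_1\geq 0$ everywhere and is continuous and strictly positive somewhere, its integral is strictly positive, so $X_0$ lies in the \emph{open} region $\{u_1>0\}$ with a margin bounded below in terms of $\alpha$ (here one uses that the margin can be controlled uniformly on $Q_1(\alpha)$, which is the reason $\eps(\alpha)$ again depends on $\alpha$). Then the $O(\eps)$ closeness of $X_0^\eps(Z_0)$ to $X_0$, for $\eps$ small enough, keeps the first species-component strictly positive, giving $X_0^\eps(Z_0)\in\mathcal{Q}_1$. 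The main obstacle is the uniformity over $Q(\alpha)$ and $Q_1(\alpha)$: one must ensure a single threshold $\eps(\alpha)$ works for all such initial data, which is why the lower bound $\alpha$ on $R$ and the strict positivity margin on the $u_1$-mean are essential — without them the $O(\eps)$ perturbation, whose constant must be uniform, could not be absorbed.
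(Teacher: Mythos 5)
Your treatment of point (i) is correct and is exactly the paper's argument: the resource component is handled by the $O(\eps)$ closeness together with the uniform lower bound $r(0)\geq\alpha$, and the species components by the conservation of the hyperplanes $\{u_i=0\}$ as in Remark \ref{remarkx0}.

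Point (ii), however, contains a genuine gap. You claim that for $\bW(0)\in Q_1(\alpha)$ the mean $u_1(0)=\frac{1}{|\Omega|}\int_\Omega U_1(0)$ is strictly positive \emph{with a margin bounded below in terms of $\alpha$, uniformly on $Q_1(\alpha)$}. This is false: $\alpha$ only constrains the resource component ($R\geq\alpha$), not $U_1$, and $Q_1(\alpha)$ contains initial data whose $U_1$ is a tiny bump supported near one point, so $u_1(0)$ can be arbitrarily small --- in particular of size $O(\eps)$ or smaller. The $\eps$-perturbation in Theorem \ref{errorbound}-(iii) then cannot be absorbed, and your openness argument only yields an $\eps$ threshold \emph{depending on the initial datum} (equivalently, uniform on compact subsets of $Q_1(\alpha)$), not the uniform $\eps(\alpha)$ the lemma asserts. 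This is precisely the difficulty the paper flags in the footnote to this proof. A telling symptom is that your argument never uses the hypothesis $r_1^*<r_j^*$ for $j\neq 1$, which is essential. The paper's actual proof of (ii) is by contradiction: if $u_1^\eps(0)=0$, then the reduced trajectory stays on $\{u_1=0\}$, so $r^{\eps,[0]}(t)\to\widehat r=r_k^*$ with $k\neq 1$; since $r_1^*<\widehat r$, the effective growth rate $\widetilde{f_1}(\widehat r)-\widetilde{m_1}$ is positive, forcing $u_1^{\eps,\infty}\equiv 0$ on the central manifold and hence exponential decay of $\|U_1^\eps(\cdot,t)\|_\infty$; but the genuine PDE solution satisfies $U_1^\eps>0$ and, by testing against the principal eigenfunction $\phi_\eps$ of $f_1(\cdot,\widehat r)-m_1+\frac{1}{\eps}A_1$ (whose principal eigenvalue converges to $\widetilde f_1(\widehat r)-\widetilde{m_1}>0$), the quantity $\int_\Omega U_1^\eps\phi_\eps$ is eventually increasing --- a contradiction. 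This dynamical argument is what you are missing; without it the uniformity in $\eps(\alpha)$ cannot be obtained.
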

\begin{proof}  Let $\alpha>0$ be fixed and take $\bW(0)\in Q(\alpha)$. 
From $\|\Pi_E(\bW(0))-X_0^\eps(Z_0)\|_E\leq C\eps$, we deduce  $r^\eps(0)=r(0)+O(\eps)\geq \alpha+O(\eps)>0$ provided $\eps$ is small enough. Moreover, 
the conservation of the line $U_i\equiv0$ by both the system $S_\eps$ and $S_0^c$ implies $u_i^\eps(0)\geq 0$ (see the remark \ref{remarkx0})
which proves the point $(i)$.\\

The only difficulty in proving $(ii)$  is that, {\it a priori}, 
taking an initial data $\bW(0)\in Q_1(\alpha)$ can  provide a modified initial data
$X_0^\eps(Z_0)\notin \mathcal{Q}_1$, i.e. such that $u_1^\eps(0)=0$. 
We  show that this can not hold by contradiction\footnote{
Let us remarks at this step that one gets 
$u_1(0):=\frac{1}{\Omega}\int_\Omega U_1^\eps (0,x)dx+O(\eps)$ so that for any initial data
$U_1(x,0)>0$, one gets $u_1^\eps(0) >0$ for small enough $\eps$ {\it depending on } $\bW(0)$. 
It follows directly that the global asymptotic behavior holds true when $\bU_1^\eps(0)$ is far enough from the boundary.
 One can also reformulate this by saying that for any compact subset $\mathcal{K}$ of $Q_1(\alpha)$, 
 there exists $\eps(\mathcal{K})$ such that for any $\eps\in(0,\eps(\mathcal{K}))$, the global asymptotic behaviors holds. \\
The only problem occurs when $U_1=O(\eps)$ which can very hold in $Q_1(\alpha)$.
}.\\
Assume that  $\bW(0)\in Q_1(\alpha)$ and that 
$X_0^\eps(Z_0)$ verifies $u_1^\eps(0)=0$.
Denote $X^{\eps,[0]}(t):=(r^{\eps,0}(t),u_1^{\eps,0}(t),\cdot,u_N^{\eps,0}(t))$  the solution of $S_0^c$ 
with $X^{\eps,[0]}(0)=X_0^\eps(Z_0)$.\\
The line $u_1=0$ being invariant for $S_0^c$, one has
\begin{equation}\label{u1eps0eq0}
\forall t\geq0,\quad u_1^{\eps,[0]}(t)=0\end{equation}
and then, by the proposition \ref{hyp0},
\begin{equation}\label{reps0torchap}
 \lim_{t\to+\infty} r^{\eps,[0]}(t)= \widehat{r}\text{ where }\widehat{r}=r_k^*\text{ for some }k\neq 1.
 \end{equation}
 
 Now, let $X^{\eps,[\infty]}(t)=(r^{\eps,\infty}(t),u_1^{\eps,\infty}(t),\ldots,u_N^{\eps,\infty}(t))$ be a solution of $S_\eps^c$ whis initial data
 $X^{\eps,[\infty]}(0)$ given by  Theorem \ref{errorbound}-(ii). We claim that $u_1^{\eps,[\infty]}=0$. 
 Indeed, from \eqref{u1eps0eq0} and \eqref{reps0torchap}, this theorem implies
\begin{equation}\label{poseq1}
\forall t>0,\;0\leq u_1^{\eps,\infty}(t)\leq C\eps
\end{equation}
and for $t$ large enough,
\begin{equation}\label{poseq2}
|r^{\eps,\infty}(t)-\widehat{r}|\leq C\eps.
\end{equation}
Thus, by  lemma \ref{lemmaconservation}, and smoothness of $h$ and $f_1$, one gets for $t$ large enough
$$\frac{d}{dt} u_1^{\eps,\infty}= u_1^{\eps,\infty}\left(\widetilde{f_1}(\widehat{r})-\widetilde{m_1}+O(\eps)\right)(1+O(\eps)).$$
Moreover, since $r_1^*<r_k^*$ for all $k\neq 1$, one has $\widehat{r}>r_1^*$ and then, 
if $\eps$ is small enough (depending only on the gap $\widetilde{f_1}(\widehat{r})-\widetilde{m_1}$), one gets 
$$\left(\widetilde{f_1}(\widehat{r})-\widetilde{m_1}+O(\eps)\right)>0.$$
\noindent It follows that if $u_1^{\eps,\infty}(0)>0,$ then 
${\displaystyle \lim_{t\to +\infty}u_{1}^{\eps,\infty}(t)= +\infty}$ a contradiction with \eqref{poseq1}.\\
Thus $u_1^{\eps,\infty}(0)=0$ and then $u_1^{\eps,\infty}(t)=0$ for all $t\geq 0$. 
It follows by the Theorem \ref{errorbound} that, for some positive constants $C$ and $\mu'$,
\begin{equation}\label{Uto0}
\|U_1^\eps(\cdot,t)\|_\infty\leq C e^{-\mu'\frac{t}{\eps}},
\end{equation}
and from \eqref{reps0torchap} we deduce for large enough $t>0$,
\begin{equation}
\|R^{\eps}(\cdot,t)- \widehat{r}\|_\infty\leq C\eps.
\end{equation}
On the other hand, for any $t>0$ and $x\in\Omega$, the real (component of the ) solution $U_1^\eps(x,t)$ is positive and verifies 
\begin{equation}\label{eqU}
\begin{split}
\partial_t U_1^\eps(x,t)&=U_1^\eps(x,t)\left(f_1(x,R^{\eps}(x,t))-m_1(x)\right)+\frac{1}{\eps} A_i U_1^\eps(x,t)\\
						&=U_1^\eps(x,t)\left(f_1(x,\widehat{r})-m_1(x)+O(\eps)\right)+\frac{1}{\eps} A_i U_1^\eps(x,t)\\
\end{split}
\end{equation}
It is well known that the operator $\left(f_1(x,\widehat{r})-m_1(x)\right)+\frac{1}{\eps} A_i $ has a principal eigenvalue $\lambda_\eps$ and 
a corresponding function $\phi_\eps>0$. Moreover (see for instance \cite{hutson1}) 
$\lambda_\eps$ tends continuously to $ \widetilde{f}_1(\widehat{r})-\widetilde{m_1}>0$ as $\eps\to 0$. 
Multiplying \eqref{eqU} by $\phi_\eps$ and integrating over $\Omega$, one obtains for large enough $t>0$,
$$\partial_t \int_\Omega U_1^\eps(t,x)\phi_\eps(x)dx=(\lambda_\eps+O(\eps))\int_\Omega U_1^\eps(t,x)\phi_\eps(x)dx.$$
If $\eps$ is small enough (depending only on $\widetilde{f}_1(\widehat{r})-\widetilde{m_1}$), it follows that 
$t\mapsto \int_\Omega U_1^\eps(t,x)\phi_\eps(x)dx$ is a positive increasing function for $t$ large enough which contradicts \eqref{Uto0}. 
It follows that $u_1^\eps(0)>0$ and the point $(ii)$ is proved.
\end{proof}\\

\begin{proof2}
Take $\bW(.,0)\in Q$. By the theorem \ref{chap5thmaj},  one has for some constant $M>0$
\begin{equation}\label{comparison}
 \partial_t R^\eps(x,t)-\frac{1}{\eps}A_0 R^\eps(x,t)\geq I(x)-m_0(x) R^\eps(x,t)-M\sum_{i=1}^N f_i(x,R^\eps(x,t),\; t>0,\;x\in\Omega.
\end{equation}

\noindent The comparison principle in parabolic equations shows that $R^\eps(x,t)>\underline{R}(x,t)$ where $\underline{R}(x,t)$ is a solution of \eqref{comparison}
with an equality, together with zero flux boundary conditions and the initial values $\underline{R}(x,0)=R(x,0)$. A lower-upper solution method shows that
$\underline{R}(x,t)\to \Phi(x)$ as $t\to+\infty$ where $\Phi(x)$ is the only stationary solution of \eqref{comparison} (with equality). 
From $I\not\equiv 0$ and the strong maximum principle, we deduce
$\Phi(x)>0$ for all $x\in\overline{\Omega}$. As a consequences, there exists a scalar $0<\alpha<\min_{x\in\overline{\Omega}} \Phi(x)$
and a time $t_0\geq 0$ such that
$R^\eps(t_0,x)>\alpha$ for any $t>t_0$. Since $S_\eps$ conserve the positive quadrant, it follows that $W(\cdot,t)\in Q(\alpha)$ for $t\geq t_0$
Hence, without loss of generality, one may assume that $\bW(\cdot,0)\in Q(\alpha)$ resp. $Q_1(\alpha)$). It follows from  lemma \ref{lemmarepuls} that all
the perturbed initial data appearing in  theorem \ref{errorbound} lies on $\mathcal{Q}:=\R_+^{N+1}$ (resp. $\mathcal{Q}_1$). 
Now,   by the proposition \ref{main2general},  the points $(ii)$ and $(iii)$ 
of the theorem follow from the points $(ii)$ and $(iii)$ of the proposition \ref{hyp0}. It remains to prove the point $(i)$.\\

 Let $i\in\{1,\cdots,N\}$. First, it is well known (and easy to check) that for any  initial data $X(0)\in\mathcal{Q}$, one gets
$\limsup_{t} r(t)\leq r_0^*$. 
Arguing as in the proof of the lemma \ref{lemmarepuls}, one deduces that for large enough $t$,
$$\partial_t u_i^{\eps,[\infty]}(t)\leq u_i^{\eps,[\infty]}(t)(\widetilde{f_i}(r_0^*)-\widetilde{m_i}+O(\eps))(1+O(\eps)).$$
The inequality $r_0^*<r_i^*$ reads exactly $\widetilde{f_i}(r_0^*)-\widetilde{m_i}<0$. It follows that $u_i^{\eps,[\infty]}(t)\to 0$ for small enough $\eps$ and any initial data $X_0^\eps\in \mathcal{Q}$. By virtue of the theorem \ref{errorbound}, for  some initial data $X_0^\eps\in \mathcal{Q}$, one has
 $$\|U_i^\eps(\cdot,t)-u_i^{\eps,[\infty]}(t)\|_\infty \leq C e^{-\mu'\frac{t}{\eps}}$$ and $\|U_i^\eps(\cdot,t)\|_\infty \to 0$ follows.

\end{proof2}

\section{The best competitor in average}\label{application}
Rougly speaking, the  Theorem \ref{mains2} may be summarized as follow. If the diffusion rate is large enough, then the CEP holds for the system $S_\eps$. 
At most one species survives namely {\it the best competitors in average}, 
that is the species associated with the smallest $r_i^*$. 
Inversely, looking at the system $S_\eps$ without diffusion, one defines for each $x\in \Omega$, $R_0^*(x)=I(x)/m_0(x)$ and  $R_i^*(x)$ 
the only solution of $f_i(R_i(x),x)=m_i(x)$ if it exists and $R_i^*(x)=+\infty$ else. 
We say that the $i^\text{th}$ species is a strong local competitor if there exists $x\in \Omega$ 
such that $R_i^*(x)<R_j^*(x)$ for  all $j\neq i$. 
We say that the $i^\text{th}$ species is {\it a weak local competitor} if for all $x\in \Omega$, there exists $j$ such that $R_i^*(x)>R_j^*(x)$. 
A weak local competitor can not survive to the competition without 
diffusion\footnote{ Numerical evidence show that a weak local competitor can no survive to the competition 
for small enough diffusion rates. As it is proved in \cite{DUMA}, a rigourous studied of  {\it stationnary} 
solutions for small diffusion supporte these evidences.}.

This has two implications.
Fistly, this highlights that different competitive strategies may be selected depending if the environment is well-mixed or not.
Secondly, this indicates that for  {\it intermediate} diffusion rates, several competitive strategies may  yield   {\it coexistence}.

Thus, the below detailled phenomena  are indicators of the possibility of a given environement to promote coexistence by mixing both the {\it local} aspects and
the {\it global} ones. This type of local/global duality  has been discussed within a different framework in \cite{Chesson2000} for instance.

We now discuss on precise examples three phenomena showing that the best competitor in average
can be a weak local competitor.\\

\noindent For a given function $g\in C^0(\Omega)$, (resp. a vector $g$ if $\Omega$ is finite), denote E the average of $g$.
The number $r_i^*$ (defined in figure \ref{figureri}) reads
$$r_i^*=E(R_i^*)+ J_i+H_i$$
wherein we have set
$$J_i=\widetilde{f_i}^{-1}\big(E(m_i)\big)-E\big(\widetilde{f_i}^{-1}((m_i))\big)\text{ and }H_i=E\big(\widetilde{f_i}^{-1}(m_i)\big)-E\big(f_i^{-1}(m_i)\big).$$

\noindent The biological interpretation of each term is as follows.
\begin{itemize}
\item {\bf The (averaged) local competitive strength} is represented by $E(R_i^*)$.
 The stronger local competitor the species $i$ is, the  smaller is $E(R_i^*)$.\\
 This phenomena is of particular interest in a {\it three species (or more) situation} since a generalist 
 (a species which is a weak local competitor but with a small  $E(R_i^*)$) may 
 lose the competition on each patche but win the competition in average.\\
 From a coexistence point of view, this permits to several (three or more) species  to coexiste for an intermediate diffusion rate, 
 while they can not coexist neither for a small nor a large diffusion rate.
\item {\bf The non linear effect} is represented by $J_i$. This term is null if either $\widetilde{f_i}$ is linear or $m_i$ is constant.
 Usually, the consumption function $\widetilde{f_i}$ is increasing and concave so that $\widetilde{f_i}^{-1}$ is convex. 
In this case, due to the Jensen inequality, $J_i$ is negative.\\
Hence, {\it the nonlinear effect improves the competition strength of species.} \\
From a coexistence point of view and for intermediate diffusion rate,
this is the phenomena which permits coexistence in the classical {unstirred} chemostat \cite{walt1, Wu1} or in the classical {gradostat} \cite{waltbook}.
\item {\bf The heterogeneous effect of the consumption} is represented by $H_i$.
Basically, it represents the effect of the heterogeneity of the consumption function $f_i(x,\cdot)$ and it is null if $f_i=\widetilde{f_i}$.\\
{ The larger the consumption $f_i(j,\cdot )$ is at location $j\in\Omega$ where $R_i^*(j)$ is large, the smaller is  $H_i$.} \\
Hence, {\it a fast dynamics on the sites where $R_i^*(j)$ is small improves the averaged competitive strenght of the species.}\\
From a coexistence point of view and for intermediate diffusion rate,
this  phenomena increase the possibility of coexistence in the generalised chemostat (or gradostat), see \cite{CasMad}.

\end{itemize}

Now, we illustrate this three phenomena on examples. To simplify the  discution, we focus here on the case of a two patches model: $\Omega=\{1,2\}$ and $A_i\in \R^{2\times 2}$ defined for each $i$ as
$A=A_i=\left[\begin{array}{cc} -1&1\\1&-1\end{array}\right].$
Besides, we assume that $R_i^*(j)$ is well defined for all $j=1,2$. Here, for $g=(g(1),g(2))$, 
one has $E(g)=\frac{1}{2}(g(1)+g(2))$.
\subsection{ The local competitive strength}
Define the special case of $S_\eps$ (in $\Omega=\{1,2\}$) for three species (with positive initial data)
\begin{equation}\label{localcompet}
\left\{\begin{array}{l}
d_t R(j,t)=1- R(j,t)-\sum{ U_i(j,t)R(j,t)}+\frac{1}{\eps} (A R)(j,t),\\
d_t U_i(j,t)=( R(j,t)-m_i(j))U_i(j,t)+\frac{1}{\eps} (A U_i)(j,t),\quad i=1,2, 3
\end{array}\quad j=1,2\right.\end{equation}

For $j=\{1,2\}$, one gets $R_0^*(j)=1$ and  $R_i^*(j)=m_i(j)$ for $i=1,2,3$. We also assume that $1>R_i^*(j)$ for $i=1,2,3$ and $j=1,2$.
Here, $r_i^*$ reads $r_i^*=\frac{1}{2} (m_i(1)+m_i(2))$.

One claims that it is possible to  find  three vector $m_i$ such that $R_3^*(j)>min(R_1^*(j),R_2^*(j))$. 
for all $j\in \{1,2\}$ and $r_3^*<\min(r_1^*,r_2^*)$.
It suffices to choose $m_i$ such that for instance $m_1(1)<m_3(1)<m_2(1)$ and $m_2(2)<m_3(2)<m_1(2)$ and $m_3(1)+m_3(2)<m_i(1)+m_i(2)$ for $i=1,2$.
The vectors $m_1={}^t(0.1,0.9)$, $m_2={}^t(0.9,0.1)$ and $m_3={}^t(0.4,0.4)$ suit.\\
{ Biologically, an interpretation is  that the first and second species are specialists (the best competitor on one site and the weakest on the other site)
whereas the third species is a generalist (a weak competitor but the weakest on no site)}

Hence, according to Theorem \ref{mains2}, the third species is the best averaged competitor but a weak local competitor.
Hence, {\bf without migration the first species survives on the site $1$, the second on the site $2$ and the third nowhere, 
while for fast migration, the two first species do not survive and the third is the only survivor.}

For an intermediate diffusion rate, we guess that the 
three species may eventually coexiste (even if it means increasing the number of sites\footnote{As it is shown in \cite{HofSo},  stationary coexistence of $N$ species 
in $P$ sites is generically impossible. Thus, $3$ species can not coexist in less than $3$ patches.}).

in the sense that the species 1 and 2 are good only on the sites 1 and 2 respectivly, while the species 3  \\

\subsection{ The non linear effect}
Here we assume that the consumption function $f_i$ is homogeneous so that $H_i=0$. One discuss the particular cases of Holling type II functions :
$f_i(R)=\frac{R}{k_i+R}$.
The nonlinear effect is more important if the function $f_i$ is very nonlinear. For Holling type II functions,
this can be measured by the number $k_i$:
\begin{equation}\label{Jensen}
J_i=k_i\left[\frac{E(m_i)}{1-E(m_i)}-E\left(\frac{m_i}{1-m_i}\right)\right].
\end{equation}
Due to the Jensen's inequality, $J_i$ is non positive and is null if and only if $m_i$ is constant. 


As a consequence one can constructed an explicit example of two species competing for the same resource $R$ such that the species 1 is the
best local competitor on each site  while the second species is the best competitor in average. 
An explicit example is the following 

\begin{equation}
\left\{\begin{array}{l}
d_t R(j,t)=10- R(j,t)- \frac{R(j,t)}{1+R(j,t)}U_1(j,t)-\frac{R(j,t)}{0.25+R(j,t)}U_2(j,t)+\frac{1}{\eps} (A R)(j,t),\\
d_t U_1(j,t)=( \frac{R(j,t)}{1+R(j,t)}-m_i(j))U_1(j,t)+\frac{1}{\eps} (A U_1)(j,t),\\
d_t U_2(j,t)=( \frac{R(j,t)}{0.25+R(j,t)}-m_i(j))U_2(j,t)+\frac{1}{\eps} (A U_2)(j,t),\\
\end{array}\right.\end{equation}
where $m_1={}^t(0.38,34/41)$ and $m_2={}^t(0.75,20/21)$. Explicite computations give
$R_1^*={}^t(0.6129   , 4.8571)$ and $R_2^*={}^t(0.75  ,  5)$ while $r_1^*\approx 1.5293$ and $r_2^*=1.43$.\\

\noindent As a consequence, 
{\bf the first species is the only survivor for slow migration will the species 2 will be the only survivor for fast enough migration.}\\
One can also build an example of a single species and we obtain:
{\bf due to the nonlinear effect, a species which is  able to survives on no site without migration can survive for fast enough migration.}\\
\subsection{The heterogeneous effect of the consumption}
In the previous discussion, the heterogeneity take place only on the mortality. 
If the consumption function itself is heterogeneous, a third phenomenon occurs. 
Here, we discuss the case of a linear consumption function so that $J_i$ is null. Let take
$$f_i(j,R)=C_i(j) R.$$

We illustrate this phenomena 
 on the following  two species system 
 \begin{equation}\label{localcompetcov}
\left\{\begin{array}{l}
d_t R(j,t)=1- R(j,t)-\sum{ C_i(j)U_i(j,t)R(j,t)}+\frac{1}{\eps} (A R)(j,t),\\
d_t U_i(j,t)=( C_i(j)R(j,t)-m_i(j))U_i(j,t)+\frac{1}{\eps} (A U_i)(j,t),\quad i=1,2
\end{array}\right.\end{equation}
We will see that {\it the best competitor in average can be the weakest competitor everywhere} in that case.\\
This phenomena is similar to the Fitness-density covariance in heterogeneous environment stress by Chesson {\it et al.} \cite{Chesson2000}. 
Indeed, noting  $cov(f,g)=E(fg)-E(f)E(g)$, one get $r_i^*=E(R_i^*)+cov(\frac{C_i}{E(C_i)},R_i^*)$ and $r_0^*=E(R_0^*)+cov(\frac{m_0}{E(m_0)},R_0^*)$.\\

A species may be the weakest local competitor and the best competitor in average. Indeed, 
 $r_1^*>r_2^*$ if and only if $cov(\frac{C_2}{E(C_2)},R_2^*)-cov(\frac{C_1}{E(C_1)},R_1^*)<E(R_1^*)-E(R_2^*)$ which implies
 \begin{proposition}(Competitive covariance in heterogeneous environment)\\
 $r_2^*<r_1^*$ if and only if   $cov(\frac{C_2}{E(C_2)},R_2^*)-cov(\frac{C_1}{E(C_1)},R_1^*)<E(R_1^*)-E(R_2^*)$.
 In particular, one may have $R_1^*(j)<R_2^*(j)$ for each $j\in \Omega$.
 \end{proposition}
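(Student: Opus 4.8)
The plan is to reduce the whole statement to the single algebraic identity
\[
r_i^* = E(R_i^*) + cov\left(\frac{C_i}{E(C_i)}, R_i^*\right),
\]
already announced just above the statement: once this is in hand the equivalence is a one–line rearrangement, and the concluding ``in particular'' assertion is an existence claim that I would settle with an explicit two–patch example.

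First I would prove the identity. Because the consumption function is linear, $f_i(x,R)=C_i(x)R$, one has $\widetilde{f_i}(r)=\frac{1}{|\Omega|}\int_\Omega C_i(x)\,r\,dx=E(C_i)\,r$, so the defining relation $\widetilde{f_i}(r_i^*)=\widetilde{m_i}=E(m_i)$ yields $r_i^*=E(m_i)/E(C_i)$. On the other hand $R_i^*(x)$ solves $f_i(x,R_i^*(x))=m_i(x)$, i.e. $C_i(x)R_i^*(x)=m_i(x)$ pointwise, whence $E(C_iR_i^*)=E(m_i)$. Expanding with the definition $cov(f,g)=E(fg)-E(f)E(g)$ then gives
\[
cov\left(\frac{C_i}{E(C_i)},R_i^*\right)=\frac{E(C_iR_i^*)}{E(C_i)}-E(R_i^*)=\frac{E(m_i)}{E(C_i)}-E(R_i^*)=r_i^*-E(R_i^*),
\]
which is exactly the identity (equivalently, it records that $J_i=0$ and $H_i=r_i^*-E(R_i^*)$ in the linear case discussed in this subsection).

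The equivalence is then immediate: writing the identity for $i=1,2$ and subtracting gives
\[
r_2^*-r_1^*=\big(E(R_2^*)-E(R_1^*)\big)+cov\left(\frac{C_2}{E(C_2)},R_2^*\right)-cov\left(\frac{C_1}{E(C_1)},R_1^*\right),
\]
so $r_2^*<r_1^*$ holds if and only if $cov(\frac{C_2}{E(C_2)},R_2^*)-cov(\frac{C_1}{E(C_1)},R_1^*)<E(R_1^*)-E(R_2^*)$, as claimed.

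For the final assertion I would exhibit a configuration with $R_1^*(j)<R_2^*(j)$ at every site yet $r_2^*<r_1^*$. The mechanism to exploit is that $r_i^*=\big(C_i(1)R_i^*(1)+C_i(2)R_i^*(2)\big)/\big(C_i(1)+C_i(2)\big)$ is the $C_i$-weighted average of the local break-even values, and the two weightings may be chosen independently of the pointwise order of $R_1^*$ and $R_2^*$. Concentrating $C_1$ on the site where $R_1^*$ is \emph{large} and $C_2$ on the site where $R_2^*$ is \emph{small} pushes $r_1^*$ up and $r_2^*$ down; for instance $R_1^*={}^t(0.3,0.9)$, $R_2^*={}^t(0.4,0.95)$ with $C_1={}^t(0.1,10)$, $C_2={}^t(10,0.1)$ (and $m_i=C_iR_i^*$) gives $R_1^*(j)<R_2^*(j)<R_0^*(j)=1$ for $j=1,2$ while $r_1^*\approx0.89>r_2^*\approx0.41$. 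The one delicate point is checking that the two requirements are compatible at all: this needs the profiles $R_i^*$ to be genuinely non-constant, since if $R_1^*$ were spatially constant then $r_1^*$ would equal that constant and $r_2^*$, being a weighted average of values strictly above it, could never fall below $r_1^*$. I expect this feasibility check, rather than any of the algebra, to be the only real content of the argument.
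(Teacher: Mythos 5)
Your proposal is correct and follows essentially the same route as the paper: the equivalence is read off from the identity $r_i^*=E(R_i^*)+cov\left(\frac{C_i}{E(C_i)},R_i^*\right)$, which you derive exactly as the paper intends (from $\widetilde{f_i}(r)=E(C_i)r$ and $C_iR_i^*=m_i$ pointwise), and the subtraction for $i=1,2$ gives the stated inequality. Your explicit two-patch example for the ``in particular'' clause (with the accompanying feasibility remark that one needs $\max_j R_1^*(j)>\min_j R_2^*(j)$) is just a concrete instance of the weight-concentration argument the paper uses in its subsequent proposition, and your numbers check out.
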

If $R_1^*(j)<R_2^*(j)$ for each $j\in \Omega$, then it is necessary that $cov(c_2,R_2^*)-cov(c_1,R_1^*)$ is negative and small enough. 
This means that either the best local competitor as maximal consumption rate on  bad site ( where $R_1^*(j)$ is large), 
or the weak local competitor has maximal consumption rate on good site,(where $R_2^*(j)$ is small).\\
The following result give a necessary and sufficient condition on $R_i^*$ for this phenomena may happen.
\begin{proposition}
Suppose that the first species  is the best competitors everywhere, that is $R_1^*(j)<R_2^*(j)$ for all $j\in \Omega$.\\
If $\max_{j\in \Omega} R_2^*(j)<\min_{j\in \Omega} R_1^*(j)$, then there exists two smooth positive vectors  $C_1$ and $C_2$ such that $r_1^*>r_2^*$ 
\end{proposition}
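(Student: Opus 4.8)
The plan is to use the explicit weighted-average form that $r_i^*$ takes in the linear-consumption case and to realize the inequality $r_1^*>r_2^*$ by concentrating each consumption vector on a single patch.

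First I would record the basic identity. Since $f_i(j,R)=C_i(j)R$, the local threshold $R_i^*(j)$ solves $C_i(j)R_i^*(j)=m_i(j)$, hence $m_i(j)=C_i(j)R_i^*(j)$; using $E(g)=\tfrac12\big(g(1)+g(2)\big)$ together with $\widetilde{m_i}=E(m_i)$ and $\widetilde{f_i}(r)=E(C_i)\,r$, this gives
\begin{equation*}
r_i^*=\frac{\widetilde{m_i}}{E(C_i)}=\frac{E\!\big(C_iR_i^*\big)}{E(C_i)}
=\frac{C_i(1)R_i^*(1)+C_i(2)R_i^*(2)}{C_i(1)+C_i(2)} .
\end{equation*}
Thus $r_i^*$ is a convex combination of $R_i^*(1)$ and $R_i^*(2)$ with strictly positive weights $C_i(j)/\big(C_i(1)+C_i(2)\big)$, which is precisely the covariance formula $r_i^*=E(R_i^*)+\mathrm{cov}\!\big(\tfrac{C_i}{E(C_i)},R_i^*\big)$ recalled above.

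Second, I would identify the achievable range. As the ratio $C_i(1)/C_i(2)$ runs over $(0,+\infty)$, the weight on patch $1$ runs over $(0,1)$, so $r_i^*$ sweeps the whole open interval $\big(\min_j R_i^*(j),\,\max_j R_i^*(j)\big)$: it tends to $\max_j R_i^*(j)$ when almost all the mass is placed on the patch where $R_i^*$ is largest, and to $\min_j R_i^*(j)$ when it is placed on the patch where $R_i^*$ is smallest. Crucially, $r_1^*$ and $r_2^*$ may be tuned independently, since $C_1$ and $C_2$ are chosen separately. Next I would build the vectors explicitly: let $j_1$ be a patch on which $R_1^*$ attains its maximum and $j_2$ one on which $R_2^*$ attains its minimum; take $C_1$ with nearly all its mass on $j_1$ and $C_2$ with nearly all its mass on $j_2$ (for instance $C_i\equiv1$ off the distinguished patch and $C_i=1/\delta$ on it, then $\delta\to0$). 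Then $r_1^*\to\max_j R_1^*(j)$ and $r_2^*\to\min_j R_2^*(j)$, so the \emph{overlap condition} $\max_j R_1^*(j)>\min_j R_2^*(j)$ forces $r_1^*>r_2^*$ for $\delta$ small enough; all the entries produced are positive, which is the only content of ``smooth positive vector'' over the finite set $\Omega=\{1,2\}$. Conversely, since each $r_i^*$ lies in $\big[\min_j R_i^*,\max_j R_i^*\big]$, no admissible choice can achieve $r_1^*>r_2^*$ once $\max_j R_1^*\le\min_j R_2^*$; this yields the announced necessary-and-sufficient character of the condition.

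Finally, the one point I would handle with care is the compatibility with the standing assumption that species $1$ is the stronger local competitor everywhere, $R_1^*(j)<R_2^*(j)$ for all $j$. The weighted mean can reverse this pointwise order exactly when the two ranges are \emph{not} fully separated, i.e. when species $1$ on its worst patch still needs more resource than species $2$ on its best patch; this is the overlap condition above, and it is the covariance mechanism that the whole subsection illustrates. I expect no analytic difficulty here, as the argument is entirely a convex-combination and interval-overlap computation; the only bookkeeping is to confirm this compatibility and, if one wants both species to remain admissible competitors, to verify that $C_1$ and $C_2$ can be rescaled so that $r_1^*,r_2^*<r_0^*$ as well.
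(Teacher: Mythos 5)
Your proof is correct and rests on the same device as the paper's: write $r_i^*=E(C_iR_i^*)/E(C_i)$ as a convex combination of the local values $R_i^*(j)$ with weights $C_i(j)/\big(C_i(1)+C_i(2)\big)$, then concentrate each $C_i$ on a single patch so that $r_i^*$ approaches an endpoint of its attainable interval. The substantive difference is \emph{which} endpoints you aim at, and this matters because the proposition as printed is internally inconsistent: the hypothesis $\max_j R_2^*(j)<\min_j R_1^*(j)$ forces $R_2^*(j)<R_1^*(j)$ for every $j$, contradicting the standing assumption $R_1^*(j)<R_2^*(j)$, so the statement is literally vacuous. The paper's own proof concentrates $C_1$ where $R_1^*$ is minimal and $C_2$ where $R_2^*$ is maximal, matching the printed (vacuous) hypothesis; you instead concentrate $C_1$ where $R_1^*$ is maximal and $C_2$ where $R_2^*$ is minimal, and you correctly identify that the condition actually needed --- and the only one compatible with $R_1^*<R_2^*$ pointwise --- is the overlap condition $\max_j R_1^*(j)>\min_j R_2^*(j)$. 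That is plainly the intended statement (it is what the surrounding covariance discussion requires), so your reading is the right repair rather than a gap in your argument. Your additional observations --- that $r_i^*$ sweeps the whole open interval $\big(\min_j R_i^*(j),\max_j R_i^*(j)\big)$ as the mass ratio varies, hence the overlap condition is also \emph{necessary} for $r_1^*>r_2^*$ --- go slightly beyond the paper and actually substantiate its claim of a ``necessary and sufficient condition,'' which the paper's two-line proof does not address.
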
 
\begin{proof}
$R_1^*(j)$ and $R_2^*(j)$ being fixed, one gets $r_i^*=\frac{C_i(j)R_i^*(j)}{C_i(j)}$.
It suffices to find two vectors such that $E(C_1 R_1^*) E C_2)<E( C_2 R_2^*)E(C_1)$. 
Denoting $j_1$ and $j_2$ such that $\min_{j\in \Omega} R_1^*(j)=R^*_1(j_1)$ and $\max_{j\in \Omega} R_2^*(j)=R_2^*(j_2)$,
it suffices to choose two vectors, such that for $i=1,2$, $C_i(j)\approx \delta(j=j_i)$. It  comes $r_i^*\approx R^*_i(j_i)$ which end the proof.
\end{proof}

According to the Theorem \ref{mains2}, the second species is the only survivor if $\eps$ is small enough. 
Numerical simulations indicate that, as expected,  the first species is the only survivor for large $\eps$, 
the second is the only survivor for small $\eps$, and the two species coexist for an intermediate value of $\eps$. 
Similiar arguments on single species models show that a species may not survive locally but survive globaly or conversely. In conclusion
{\bf a fast dynamics on good sites increases the averaged competitive strenght of a species.}\\
This underline the importance of the spatial heterogeneity together with the value of the diffusion rates on coexistence phenomena.\\

\section{Conclusion}
\paragraph{}
In this text, we have studied a system of $N$ species competing for a single resource where populations and resource depend both on time and space. 
The demography is described {\it at each site} by a chemostat model, assuming increasing consumption functions and constant yields. 
The diffusions are assumed {\it fast} which induces an average effect on the spatial repartition of the populations.
Our results are as follows. 
\paragraph{}
We show that the dynamics is asymptotically well described, up to an exponentially small error term, by a system involving $N+1$ equations instead of 
$N+1$ equations {\it per site}, describing the dynamics of the total number of individual. 
In turn, this reduced system is well described, up to an order one small error term, by a standard homogeneous chemostat system, 
called the aggregated system, which can be {\it explicitly computed}.
\paragraph{}
The main result of this work is that, if the aggregated system verifies the CEP, then the original system verifies the CEP, {\it for fast enough diffusions}.\\
This result give a justification to ''well-mixed'' assumption done in the statement of homogeneous chemostat models. 
Besides, the parameters of the aggregated system can be explicitly computed.
\paragraph{}
In particular, we show that the only survivor is the best competitor {\it in average}. Moreover, we note that the best competitor in average can be
{\it the best competitor nowhere}, and indeed, if the heterogeneity concern both the mortalities and the consumption functions or if the consumptions 
function are non linear, the best competitor 
in average can be {\it the weakest competitor everywhere} (see section \ref{application} for a definition of weak/best competitors).\\
Moreover, these results give indication about the possibility that a heterogeneous environment promotes coexistence for intermediate diffusion rates.
Note that all the results of this work hold for a  gradostat model, replacing the continuous space $\Omega$ by a finite number of sites, and the 
diffusion operators by a migration matrix  assuming to be irreducible. In that case, the Perron-Frobenius Theorem give all the spectral information 
and the central manifold Theorem state in \cite{Castella2009} apply directly leading to the similar results.\\

Several ways of future investigation can extend this study.\\
First,  Theorems \ref{mains1} and \ref{mains2} assume that the stationary solution of the aggregated problem are hyperbolic, 
that is the numbers $r_i^*$ are different. In an homogeneous chemostat (together with some additional assumption), 
the global dynamics can be described even if the $r_i^*$ are equal. The global attractor is then a family of non isolated stationary solutions 
instead of a unique stationary solution, and several species can survive \cite{waltbook}. The Theorem \ref{errorbound} gives directly some informations 
on the dynamics of the original system $S_\eps$, up to an error in $\eps$. However, the stationary solution being degenerate, the local inversion 
Theorem can no longer apply, and the  construction of  section 4 fails to describe completely the dynamics of the original system.   
In order to study more precisely this case, we have to calculate the reduced system at a higher order (up to an order 2 error term). 
This new system is still a system of $N+1$ differential equations, but with 
additional terms of order $\eps$. The dynamics of this systems is not known to our knowledge. Such a study can give several information 
of the ways the coexistence can happens and even on the way large diffusion leads to exclusion. 
\paragraph{}
Secondly, our study is restricted to the case of increasing consumption functions and constant yields. These assumptions are indeed used 
only from the the section 4. Various results are known in the case of an homogeneous chemostat with non monotone consumption functions
\cite{LI,WL,WX} or variable yields \cite{SW,SaMa}. An aggregated system can be compute for such case and determined which of this results can be applies.
\paragraph{}

\appendix

{\small 
}
\end{document}